\newtheorem{theorem}{Theorem}[section]
\newtheorem{proposition}[theorem]{Proposition}
\newtheorem{lemma}[theorem]{Lemma}
\newtheorem{definition}{Definition}
\begin{document}

\begin{frontmatter}

\title{Local asymptotic equivalence of pure states ensembles and quantum
Gaussian white noise}

\runtitle{Quantum asymptotic equivalence }


\begin{aug}
\author{\fnms{Cristina} \snm{Butucea,}\ead[label=e1]{cristina.butucea@ensae.fr}}
\author{\fnms{M\u ad\u alin} \snm{Gu\c t\u a}\ead[label=e2]{madalin.guta@nottingham.ac.uk}}
\and
\author{\fnms{Michael} \snm{Nussbaum} \thanksref{t3} \ead[label=e3]{nussbaum@math.cornell.edu}}

\thankstext{t3}{Supported in part by NSF Grant DMS-1407600}
\address{CREST, ENSAE, Universit\'e Paris-Saclay\\ 5, avenue Henry Le Chatelier, \\91120 Palaiseau,\\ France \\ \printead{e1}}
\address{University of Nottingham\\ School of Mathematical Sciences\\ University Park\\ NG7 2RD Nottingham\\UK \\\printead{e2}}
\address{Department of Mathematics \\ Malott Hall, Cornell University \\ Ithaca, NY 14853 \\ USA \\ \printead{e3}}
\affiliation{CREST, ENSAE, Universit\'e Paris-Saclay; University of Nottingham and Cornell University}

\end{aug}

\runauthor{Butucea, C., Gu\c t\u a, M. and Nussbaum, M.}

\bigskip

{\Large \it Dedicated to Richard D. Gill on the occasion of his 66th birthday.}

\begin{abstract}
~Quantum technology is increasingly relying on specialised statistical inference methods for analysing quantum measurement data. This motivates the development of ``quantum statistics'', a field that is shaping up at the overlap of quantum physics and ``classical'' statistics.  One of the less investigated topics to date is that of statistical inference for infinite dimensional quantum systems, which can be seen as quantum counterpart of non-parametric statistics.
In this paper we analyse the asymptotic theory of quantum statistical models consisting of ensembles of quantum systems which are identically prepared in a pure state. In the limit of large ensembles we establish the local asymptotic equivalence (LAE) of this i.i.d. model to a quantum Gaussian white noise model. We use the LAE result in order to establish minimax rates for the estimation of pure states belonging to Hermite-Sobolev classes of wave functions. Moreover, for quadratic functional estimation of the same states we note an elbow effect in the rates, whereas for testing a pure state a sharp parametric rate is attained over the nonparametric Hermite-Sobolev class.
\end{abstract}

\begin{keyword}[class=MSC]
\kwd{Primary }{62B15}
\kwd{; secondary }{62G05, 62G10, 81P50}
\end{keyword}

\begin{keyword}
\kwd{Le Cam distance}
\kwd{local asymptotic equivalence}
\kwd{quantum Gaussian process}
\kwd{quantum Gaussian sequence}
\kwd{quantum states ensemble}
\kwd{nonparametric estimation}
\kwd{quadratic functionals}
\kwd{nonparametric sharp testing rates}
\end{keyword}

\end{frontmatter}

\section{Introduction}
A striking insight of quantum mechanics is that randomness is a fundamental feature of the physical world at the microscopic level. Any observation made on a quantum system such as an atom or a light pulse, results in a non-deterministic, stochastic outcome. The study of the \emph{direct map} from the system's state or preparation to the probability distribution of the measurement outcomes, has been one of the core topics in traditional quantum theory.
%
In recent decades the focus of research has shifted from fundamental physics towards applications at the interface with information theory, computer science, and metrology, sharing the paradigm that individual quantum systems are carriers of a new type of information \cite{NielsenChuang}.

In many quantum protocols, the experimenter has incomplete knowledge and control of the system and its environment, or is interested in estimating an external field parameter which affects the system dynamics. In this case one deals with  a \emph{statistical inverse problem} of inferring unknown state parameters from the measurement data obtained by probing a large number of individual quantum systems.
%
%
The theory and practice arising from tackling such questions is shaping up into the field of \emph{quantum statistics}, which lies at the intersection of quantum theory and statistical inference \cite{Holevo,Helstrom,Hayashi.editor,Paris.editor,Barndorff-Nielsen&Gill&Jupp,Artiles&Gill&Guta}.

One of the central problems in quantum statistics is state estimation: given an ensemble of identically prepared, independent systems with unknown state, the task is to estimate the state by performing appropriate measurements and devising estimators based on the measurement data. A landmark experiment aimed at creating multipartite entangled states \cite{Blatt}
highlighted the direct practical relevance of efficient estimation techniques for large dimensional systems, the complexity of estimating large dimensional states, and the need for solid statistical methodology in computing reliable ``error bars''. This has motivated the development of new methods such as compressed sensing and matrix $\ell_1$-minimisation \cite{Gross,Gross2,Flammia&Gross}, spectral thresholding for low rank states \cite{ButuceaGutaKypraios}, confidence regions \cite{Carpentier,Christandl&Renner,Walter&Renes,Temme&Verstraete,Faist&Renner}.

Another important research direction is towards developing a \emph{quantum decision theory} as the overall mathematical
framework for inference involving quantum systems seen as a form of ``statistical data''.
Typically, the route to finding the building blocks of this theory starts with a decision problem (e.g. testing between two states, or estimating certain parameters of a state) and the problem of finding optimal measurement settings and statistical procedures for treating the (classical, random) measurement data. For instance, in the context of asymptotic binary hypothesis testing, two key results are the quantum Stein lemma \cite{Hiai&Petz,Ogawa&Nagaoka} and the quantum Chernoff bound \cite{Audenaert,Nussbaum&Szkola,Audenaert&Nussbaum,Li}. As in the classical case, they describe the exponential decay of appropriate error probabilities for optimal measurements, and they provide operational interpretations for quantum relative entropy, and respectively quantum Chernoff distance. Similarly,  an important problem in state estimation is to identify measurements which allow for the smallest possible estimation error.   A traditional approach has been to establish a ``quantum Cram\'{e}r-Rao bound'' (QCRB) \cite{Holevo,Helstrom,Braunstein&Caves} for the covariance of unbiased estimators, where the right side is the inverse of the
``quantum Fisher information matrix'', the latter depending only on the structure of the quantum statistical model. However, while the QCRB is achievable asymptotically for one-dimensional parameters, this is not the case for multi-parameter models due to the fact that the measurements which are optimal for different one-dimensional components, are generally incompatible with each other.

These difficulties can be overcome by developing a fundamental theory of comparison and convergence of quantum statistical
models, as an extension of its classical counterpart \cite{Torgersen, LeCam}. While classical ``data processing'' is described by randomisations, physical transformations of quantum systems are described by \emph{quantum
channels} \cite{NielsenChuang}. Following up on this idea, Petz and Jencova \cite{JencovaPetz} have obtained a general characterisation of \emph{equivalent models}, as families of states that are related by quantum channels in both directions. This naturally leads to the notion of \emph{Le Cam distance} between quantum statistical models as the least trace-norm error incurred when trying to map one model into another via quantum channels \cite{Kahn&Guta}. In this framework, the asymptotic theory of state estimation can be investigated by adopting ideas from the classical \emph{local asymptotic normality} (LAN) theory \cite{LeCam}. Quantum LAN theory \cite{GutaKahn2006,GutaJencova2007,Kahn&Guta} shows that the sequence of models describing large samples of identically prepared systems can be approximated by a simpler \emph{quantum Gaussian shift model}, in the neighbourhood of an interior point of the parameter space. The original optimal state estimation problem is then solved by combining LAN theory with  known procedures for estimation of Gaussian states \cite{Guta&Janssens&Kahn,GutaKahnproc,Gill&Guta}.

In this paper we extend the scope of the quantum LAN theory to cover \emph{non-parametric} quantum models; more precisely we will be interested in the set of \emph{pure states} (one-dimensional projections) on \emph{infinite dimensional Hilbert spaces}. Infinite dimensional systems such as light pulses, free particles, are commonly encountered in quantum physics, and their estimation is an important topic in quantum optics \cite{Leonhardt}. The minimax results derived in this paper can serve as a benchmark for the performance of specific methods such as for instance quantum homodyne tomography \cite{Artiles&Gill&Guta,Butucea&Guta&Artiles}, by comparing their risk with the minimax risk derived here.

The paper is organised as follows. In Section \ref{sec.QM} we review the basic notions of quantum mechanics needed for understanding the physical context of our investigation. In particular, we define the concepts of state, measurement and quantum channel which can loosely be seen as quantum analogs of probability distribution and Markov kernels, respectively.
We further introduce the formalism of quantum Gaussian states, the Fock spaces and second quantisation, which establish the quantum analogs of Gaussian distributions, Gaussian sequences and Gaussian processes in continuous time. In Section \ref{sec.quantum.models} we introduce the general notion of a quantum statistical model and the Le Cam distance between two models. In particular, in Section \ref{sec.q.models} we define the i.i.d. and Gaussian quantum models which are analysed in the remainder of the paper.  In Appendix A.1  \cite{Butucea&Guta&Nussbaum} we review results in classical statistics on non-parametric asymptotic equivalence which serve as motivation and comparison to our work.

One of the main results is Theorem~\ref{th.qlan} giving the \emph{local asymptotic equivalence} (LAE) between the non-parametric i.i.d. pure states model and the Gaussian shift model. This extends the existing local asymptotic normality theory from parametric to non-parametric
(infinite dimensional) models.
Section \ref{sec.applications} details three applications of the LAE result in Theorem~\ref{th.qlan}. In Section~\ref{sec.estimation} we derive the asymptotic minimax rates and provide concrete estimation procedures for state estimation with respect to the trace-norm and Bures distances, which are analogues of the norm-one and Hellinger distances respectively.
The main results are Theorems \ref{thm:UBestim} and \ref{thm:lbsobolev.iid} which deal with the upper and respectively lower bound for a model consisting of an ensemble of $n$ independent identically prepared systems in a pure state belonging to a Hermite-Sobolev class $S^\alpha(L)$ of wave functions. In Theorem \ref{thm:UBestim} we describe a specific measurement procedure which provides an estimator whose risk attains the nonparametric rate $n^{-\alpha/(2 \alpha+1)}$. The lower bound follows by using the LAE result to approximate the model with a Gaussian one, combined with the lower bound for the corresponding quantum Gaussian model derived in Theorem \ref{thm:lbsobolev}. In Section \ref{sec.quadratic.functionals} we consider the estimation of a state functional corresponding to the expectation of a power
$N^{2\beta}$ of the number operator. Theorems \ref{thm:Festim} and \ref{thm:Flowbounds} establish the upper and lower bounds for functional estimation for the Hermite-Sobolev class $S^\alpha(L)$. The minimax rates are $n^{-1/2}$ (parametric) if $\alpha \geq 2 \beta$, and $n^{-1 + \beta /\alpha }$ if $\beta < \alpha <2 \beta$. In Section~\ref{sec.testing} we investigate non-parametric testing between a single state and a composite hypothesis consisting of all states outside a ball of shrinking radius. Surprisingly, we find that the minimax testing rates are parametric, in contrast to the non-parametric estimation rates. This fact is closely related to the fact that the optimal estimation and testing measurements are incompatible with each other, so that no single measurement strategy can allow for minimax estimation and testing in the same time. Results on the minimax optimal rate for testing and the
sharp asymptotics are given in Theorems~\ref{thm:Test-1} and~\ref{thm:Test-2} respectively. Further discussion on these topics and proofs of all results are presented in Appendix A and B in  \cite{Butucea&Guta&Nussbaum}, respectively.

\emph{Notation.} Following physics convention, the vectors of a Hilbert
space $\mathcal{H}$ will be denoted by the ``ket'' $|v\rangle$, so that the
inner product of two vectors is the ``bra-ket'' $\langle u|v\rangle\in
\mathbb{C}$ which is linear with respect to the right entry and anti-linear
with respect to the left entry. Similarly, $M:=|u\rangle\langle v| $ is the
rank one operator acting as $M  : |w\rangle \mapsto M|w \rangle = \langle
v |w\rangle |u\rangle$. We denote by $\mathcal{L}(\mathcal{H})$ the space of
bounded linear operators on $\mathcal{H}$ which is a C$^*$-algebra with
respect to the operator norm $\| A \| := \sup_{\psi\neq 0} \|A\psi\|
/\|\psi\| $. Additionally, $\mathcal{T}_1(\mathcal{H})\subset \mathcal{L}(%
\mathcal{H})$ is the space of Hilbert-Schmidt (or trace-class) operators
equipped with the norm-one $\| \tau \|_1: = \mathrm{Tr}(| \tau|)$, where the
operator $|\tau| := (\tau^* \tau)^{1/2}$ is the absolute value of $\tau$,
and $\tau^*$ is the adjoint of $\tau$. Finally, we denote by $\mathcal{T}_2(%
\mathcal{H})\subset \mathcal{L}(\mathcal{H})$ the space of Hilbert-Schmidt
operators equipped with the norm-two $\| \tau \|_2^2 := \mathrm{Tr}(|
\tau|^2)$, which is a Hilbert space with respect to the inner product $%
(\tau, \sigma) := \mathrm{Tr}(\tau^* \sigma)$.


\section{Quantum mechanics background}

\label{sec.QM} 
In this section we review some basic notions of quantum mechanics (QM), in
as much as it is required for understanding the subsequent results of the
paper. Since QM is a probabilistic theory of quantum phenomena, it is
helpful to approach the formalism from the perspective of analogies and differences with
``classical'' probability. We refer to \cite{NielsenChuang} for more details
on the quantum formalism.


\subsection{States, measurements, channels}


The QM formalism assigns to each quantum mechanical system (e.g. an atom,
light pulse, quantum spin) a complex Hilbert space $\mathcal{H}$, called the
space of states. For instance, the finite dimensional space $\mathbb{C}^d$
is the Hilbert space of a system with $d$ ``energy levels'', while $L^2(%
\mathbb{R})$ is the space of ``wave functions'' of a particle moving in one
dimension, or of a monochromatic light pulse.
%
The \emph{state} of a quantum system is represented mathematically by a
density matrix.

\begin{definition}
Let $\mathcal{H}$ be the Hilbert space of a quantum system. A density matrix
(or state) on $\mathcal{H}$ is a linear operator $\rho:\mathcal{H}\to
\mathcal{H}$ which is positive (i.e. it is selfadjoint and has non-negative
eigenvalues), and has trace one.
\end{definition}

We denote by $\mathcal{S}(\mathcal{H})$ the convex space of states on $%
\mathcal{H}$. Its linear span is the space of trace class operators $%
\mathcal{T}_1(\mathcal{H})$, which is the non-commutative analogue of the
space of absolutely integrable functions on a probability space $L_1(\Omega,
\Sigma, \mathbb{P})$. %
%
For any states $\rho_1$ or $\rho_2$, the convex combination $\lambda\rho_1 +
(1-\lambda)\rho_2$ is also a state which corresponds to randomly preparing
the system in either the state $\rho_1$ or $\rho_2$ with probabilities $%
\lambda$ and respectively $1-\lambda$. The extremal elements of the convex
set $\mathcal{S}(\mathcal{H})$ are the one dimensional projections $P_\psi =
|\psi \rangle \langle \psi|$ where $|\psi\rangle$ is a normalised vector,
i.e. $\|\psi\|=1$. Such states are called \emph{pure} (as opposed to mixed
states which are convex combinations of pure ones), and are uniquely
determined by the vector $|\psi\rangle$. Conversely, the vector $|\psi\rangle
$ is fixed by the state up to a complex phase factor, i.e. $|\psi\rangle$
and $|\psi^\prime\rangle :=e^{i\phi} |\psi\rangle$ represent the same state.


Although the quantum state encodes all information about the preparation of
the system, it is not a directly observable property. Instead, any
measurement produces a \emph{random outcome} whose distribution depends on
the state, and thus reveals in a probabilistic way a certain aspect of the
system's preparation. The simplest type of measurement is determined by an
orthonormal basis (ONB) $\{ |i\rangle \}_{i=1}^{\mathrm{dim} \mathcal{H}}$
and a set of possible outcomes $\{\lambda_i\}_{i=1}^{\mathrm{dim} \mathcal{H}}$ in the
following way: the outcome is a random variable $X$ taking the value $%
\lambda_i$ with probability given by the diagonal elements of $\rho$ in this
particular basis
\begin{equation*}
\mathbb{P}_\rho ( [X= \lambda_i]) = \rho_{ii} = \langle i | \rho | i\rangle.
\end{equation*}
More generally, a measurement $M$ with outcomes in a measurable space $(\Omega,
\Sigma)$ is determined by a positive operator valued measure.

\begin{definition}\label{def.POVM}
A positive operator valued measure (POVM) is a map $M:\Sigma\to \mathcal{L}(%
\mathcal{H})$ having the following properties
\begin{itemize}
\item[1)] positivity: $M(E) \geq 0$ for all events $E\in \Sigma$
\item[2)] $\sigma$-additivity: $M(\cup_i E_i) = \sum_i {M}(E_i)$ for
any countable set of mutually disjoint events $E_i$
\item[3)] normalization: $M(\Omega) = \mathbf{1}$.
\end{itemize}
The outcome of the corresponding measurement associated to $M$ has probability
distribution
\begin{equation*}
\mathbb{P}_\rho(E) = \mathrm{Tr}(\rho M(E)), \qquad E\in \Sigma.
\end{equation*}
\end{definition}

The most important example of a POVM, is that associated to the measurement
of an observable, the latter being represented mathematically by a
selfadjoint operator $A:\mathcal{H}\to \mathcal{H}$. The Spectral Theorem
shows that such operators can be ``diagonalised'', i.e. they have a spectral
decomposition
\begin{equation*}
A= \int_{\sigma(A)} x P(dx)
\end{equation*}
where $\sigma(A)$ is the spectrum of $A$, and $\{P(E) :E\in \Sigma\}$ is the
collection of spectral projections of $A$. The corresponding measurement has
outcome $a\in \sigma(A)$ with probability distribution $\mathbb{P}_\rho\left[%
a\in E\right] = \mathrm{Tr}(\rho P(E))$.

Unlike ``classical'' systems which can be observed without disturbing their
state, quantum systems are typically perturbed by the measurement, so the
system needs to be reprepared in order to obtain more information about the
state. In this sense, the system can be seen as a ``quantum sample'' which
it can be converted into a ``classical'' sample only by performing a
measurement. Thus, a measurement can be seen as a ``quantum-to-classical
randomisation'', i.e. a linear map $\mathcal{M}$ which sends a state $\rho$
to the probability density $\mathcal{M} (\rho) \equiv p_\rho := \frac{d%
\mathbb{P}_\rho}{d\mathbb{P}}$ with respect to a reference measure $\mathbb{P%
}$. 
The latter can be taken to be $\mathbb{P}_{\rho_0}$ for a strictly positive
density matrix $\rho_0$. The following lemma summarises this perspective on
measurements.

\begin{lemma}
Let $\mathcal{H}$ be a Hilbert space, and let $(\Omega, \Sigma)$ be a
measurable space. For any fixed state $\rho_0>0$ on $\mathcal{H}$, there is a
one-to-one correspondence between POVMs $M$ over $(\Omega, \Sigma)$ and
quantum-to-classical randomisations, i.e. linear maps
\begin{equation*}
\mathcal{M} : \mathcal{T}_1 (\mathcal{H}) \to L_1(\Omega, \Sigma, \mathbb{P}%
)
\end{equation*}
which are positive and normalised (maps states into probability densities).
The correspondence is given by
\begin{equation*}
\mathbb{P}_\rho (E) = \mathrm{Tr}(M(E)\rho) = \int_A p_\rho(\omega) \mathbb{P%
}_{\rho_0} (d \omega), \qquad \mathcal{M} (\rho) \equiv p_\rho := \frac{d%
\mathbb{P}_\rho}{d\mathbb{P}} .
\end{equation*}
\end{lemma}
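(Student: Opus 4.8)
The plan is to exhibit explicit maps in both directions and verify that they are mutually inverse. Starting from a POVM $M$, I would define for each state $\rho$ the probability measure $\mathbb{P}_\rho(E):=\mathrm{Tr}(\rho M(E))$ on $(\Omega,\Sigma)$ and set $\mathcal{M}(\rho):=p_\rho=d\mathbb{P}_\rho/d\mathbb{P}_{\rho_0}$. The first thing to check is that this Radon--Nikodym derivative exists, i.e. that $\mathbb{P}_\rho\ll\mathbb{P}_{\rho_0}$. This is exactly where the hypothesis $\rho_0>0$ enters: if $\mathbb{P}_{\rho_0}(E)=\mathrm{Tr}(\rho_0 M(E))=\mathrm{Tr}(\rho_0^{1/2}M(E)\rho_0^{1/2})=0$, then the positive operator $\rho_0^{1/2}M(E)\rho_0^{1/2}$ vanishes, and since $\rho_0^{1/2}$ has dense range (as $\rho_0$ is strictly positive, hence injective) and $M(E)$ is bounded, this forces $M(E)=0$, whence $\mathbb{P}_\rho(E)=0$ for every state $\rho$. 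Extending $\mathcal{M}$ to all of $\mathcal{T}_1(\mathcal{H})$ by linearity through the Jordan decomposition $\tau=\tau_+-\tau_-$ of self-adjoint operators (and the splitting into real and imaginary parts in general) produces a positive, normalised linear map, normalisation being the statement that states satisfy $\mathbb{P}_\rho(\Omega)=\mathrm{Tr}(\rho)=1$.

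For the converse, I would start from a positive normalised randomisation $\mathcal{M}$ and recover $M$ by duality. For fixed $E\in\Sigma$ consider the functional $\tau\mapsto\int_E\mathcal{M}(\tau)\,d\mathbb{P}_{\rho_0}$ on $\mathcal{T}_1(\mathcal{H})$; positivity and normalisation of $\mathcal{M}$ make it bounded (indeed a contraction on the self-adjoint part, since $\|\mathcal{M}(\tau)\|_{L_1}=\mathrm{Tr}(\tau)=\|\tau\|_1$ for $\tau\geq0$). Using the trace duality $\mathcal{T}_1(\mathcal{H})^\ast\cong\mathcal{L}(\mathcal{H})$, there is then a unique $M(E)\in\mathcal{L}(\mathcal{H})$ with $\int_E\mathcal{M}(\tau)\,d\mathbb{P}_{\rho_0}=\mathrm{Tr}(\tau M(E))$ for all $\tau$. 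The POVM axioms follow by translating properties of $\mathcal{M}$: testing against $\tau=|\psi\rangle\langle\psi|$ gives $\langle\psi|M(E)|\psi\rangle\geq0$, hence $M(E)\geq0$; and the identity $\int_\Omega\mathcal{M}(\rho)\,d\mathbb{P}_{\rho_0}=1$ for all states forces $\mathrm{Tr}(\rho M(\Omega))=1$ and thus $M(\Omega)=\mathbf{1}$.

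The main technical point is $\sigma$-additivity, which I would obtain from countable additivity of the integral: for pairwise disjoint $E_i$, dominated (or monotone) convergence yields $\int_{\cup_i E_i}\mathcal{M}(\tau)\,d\mathbb{P}_{\rho_0}=\sum_i\int_{E_i}\mathcal{M}(\tau)\,d\mathbb{P}_{\rho_0}$ for each $\tau\in\mathcal{T}_1(\mathcal{H})$, so that $\mathrm{Tr}(\tau M(\cup_i E_i))=\mathrm{Tr}(\tau\sum_i M(E_i))$; since this holds for all trace-class $\tau$, it identifies $M(\cup_i E_i)=\sum_i M(E_i)$. Care is needed here because in infinite dimension the series converges only in the $\sigma$-weak (weak operator) sense rather than in norm, so the additivity axiom must be read in that topology. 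Finally I would check that the two constructions are inverse to each other: both compositions are pinned down by the single defining relation $\mathrm{Tr}(\tau M(E))=\int_E\mathcal{M}(\tau)\,d\mathbb{P}_{\rho_0}$, and uniqueness of the Radon--Nikodym derivative on one side together with uniqueness of the operator $M(E)$ furnished by duality on the other shows that applying one map after the other returns the original object.
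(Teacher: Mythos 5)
The paper itself offers no proof of this lemma --- it is stated as a summary of the ``measurement as quantum-to-classical randomisation'' viewpoint, with the content treated as standard --- so there is no argument of the authors' to compare yours against; your proposal has to stand on its own, and it does. Both directions are handled correctly: the absolute-continuity step is exactly where $\rho_0>0$ is needed, and your argument ($\mathrm{Tr}(\rho_0^{1/2}M(E)\rho_0^{1/2})=0$ forces the positive operator $\rho_0^{1/2}M(E)\rho_0^{1/2}$ to vanish, hence $M(E)^{1/2}$ vanishes on the dense range of $\rho_0^{1/2}$, hence $M(E)=0$) is the right one; the converse via the trace duality $\mathcal{T}_1(\mathcal{H})^\ast\cong\mathcal{L}(\mathcal{H})$ is the standard route, and your observations that $\langle\psi|M(E)|\psi\rangle\geq 0$ for all $\psi$ already yields self-adjointness and positivity of $M(E)$, and that $\sigma$-additivity can only be asserted in the weak (operator) topology in infinite dimension, are exactly the points that need saying.

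One small piece of bookkeeping deserves a remark, though it is a looseness inherited from the lemma's own phrasing rather than a gap in your argument. In the forward direction the reference measure of the target $L_1$ space is $\mathbb{P}_{\rho_0}(\cdot)=\mathrm{Tr}(\rho_0 M(\cdot))$, which depends on the POVM being mapped; in the reverse direction you integrate against ``$\mathbb{P}_{\rho_0}$'' before any POVM exists, i.e.\ against whatever reference measure $\mathbb{P}$ the given randomisation carries. The round trip then returns $\rho\mapsto\mathcal{M}(\rho)/\mathcal{M}(\rho_0)$, which equals $\mathcal{M}$ only under the normalisation $\mathcal{M}(\rho_0)=1$ $\mathbb{P}$-a.e.\ (equivalently, $\mathbb{P}=\mathbb{P}_{\rho_0}$). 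So the bijection is cleanly stated between POVMs and randomisations into $L_1(\Omega,\Sigma,\mathbb{P}_{\rho_0})$ satisfying this compatibility, or between equivalence classes under change of equivalent reference measure. Making that convention explicit would close the only loose end in your final ``mutually inverse'' paragraph; everything else is complete.
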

For comparison, recall that a linear map $R: L_1(\Omega^{\prime},\Sigma^{\prime}, \mathbb{P}^{\prime})
\to L_1(\Omega,\Sigma, \mathbb{P})$ is a stochastic operator if it maps
probability densities into probability densities \cite{Strasser}. Typically such maps arise from Markov kernels and describe randomizations of dominated statistical experiments (models).

While a measurement is a quantum-to-classical randomization, a
``quantum-to-quantum randomization'' describes how the system's state
changes as a result of time evolution or interaction with other systems. The
maps describing such transformations are called quantum channels.
\begin{definition}
A quantum channel between systems with Hilbert spaces $\mathcal{H}_1$ and $%
\mathcal{H}_2$ is a trace preserving, completely positive linear map
$
T: \mathcal{T}_1(\mathcal{H}_1) \to \mathcal{T}_1(\mathcal{H}_2).
$
\end{definition}
%
The two properties mentioned above are similar to those of a classical
randomization, so in particular $T$ maps states into states. However, unlike
the classical case, $T$ is required to satisfy a stronger positivity
property: $T$ is \emph{completely positive} if $\mathrm{Id}_m \otimes T$ is
positive for all $m\geq 1$, where $\mathrm{Id}_m$ is the identity map on the space of $m$ dimensional matrices.
This ensures that when the system is correlated with an
ancillary system $\mathbb{C}^m$, and the latter undergoes the identity
transformation, the final joint state is still positive, as expected on
physical grounds.


The simplest example of a quantum channel is a unitary transformation $%
\rho\mapsto U\rho U^*$, where $U$ is a unitary operator on $\mathcal{H}$.
More generally, if $|\varphi \rangle\in \mathcal{K}$ is a pure state of an
ancillary system, and $V$ is a unitary on $\mathcal{H}\otimes \mathcal{K}$,
then
\begin{equation*}
\rho\mapsto T(\rho):= \mathrm{Tr}_\mathcal{K} ( V(\rho\otimes
|\varphi\rangle \langle\varphi |)V^* ) 
\end{equation*}
is a quantum channel describing the system state after interacting with the
ancilla. By computing the partial trace $\mathrm{Tr}_\mathcal{K} $ over $%
\mathcal{K}$ with respect to an orthonormal basis $\{|f_i\rangle \}_{i=1}^{%
\mathrm{dim} \mathcal{K}}$ we obtain the following expression
\begin{equation}  \label{eq.kraus}
T(\rho) = \sum_{i} {K}_i \rho K_i^*
\end{equation}
where $K_i$ are operators on $\mathcal{H}$ defined by $\langle \psi |K_i |
\psi^\prime\rangle :=\langle \psi \otimes f_i | U| \psi^\prime\otimes
\varphi \rangle$. Note that by definition, these operators satisfy the
normalisation condition $\sum_i K_i^* K_i = \mathbf{1}$. Conversely, the
Kraus Theorem shows that any quantum channel is of the form \eqref{eq.kraus}
with operators $K_i$ respecting the normalisation condition.


\subsection{Continuous variables, Fock spaces and Gaussian states}

In this section we look at the class of ``continuous variables'' (cv) systems,
which model a variety of physical systems such as light pulses, or free
particles. Such systems play an important role in this work as ``carriers''
of quantum Gaussian states, and in particular in the local asymptotic
equivalence result. We refer to \cite{Leonhardt} for further reading.


\subsubsection{One mode systems}

\label{sec.one.mode} 
We start with the simplest case of a ``one-mode'' cv system, after which we
show how this construction can be extended to more general ``multi-mode'' cv
systems. The Hilbert space of a one-mode system is $L_2(\mathbb{R})$, i.e.
the space of square integrable wave functions on the real line. On this we
define the selfadjoint operators acting on appropriately defined domains as
\begin{equation*}
(Q \psi)(q) = q\psi(q), \qquad (P\psi)(q) = -i \frac{d\psi(q)}{dq}
\end{equation*}
which  satisfy the ``canonical commutation relations'' $QP- PQ = i\mathbf{1}$.
To better understand the meaning of the observable $Q$, let us consider its
measurement for a pure state $\rho = |\psi \rangle \langle \psi|$ with wave function $|\psi\rangle$. The outcome
takes values in $\mathbb{R}$, and its probability distribution has density
with respect to the Lebesgue measure $p^Q_\rho(x) =|\psi(x)|^2$ . Similarly,
the probability density of the observable $P$ is given by $p^P_\rho(x) =|\tilde{\psi}
(x)|^2$, where $\tilde{\psi}\in L^2(\mathbb{R})$ is the Fourier transform of
the function $\psi(\cdot)$. When the system under consideration is the free
particle, $Q$ and $P$ are usually associated to the position and momentum
observables, while for a monochromatic light mode they correspond to the
electric and magnetic fields. Note that the distributions of $P$ and $Q$ are
not sufficient to identify the state, even in the case of a pure state.
However, it turns out that the state is uniquely determined by the
collection of probability distributions of all \emph{quadrature} observables
$X_\phi:= \cos(\phi) \cdot Q + \sin(\phi) \cdot P$ for angles $\phi\in [0,2\pi]$. To
understand this, it is helpful to think of the state of the one-mode cv
system as a quantum analogue of a joint distribution of two real valued
variables, i.e. a 2D distribution. Indeed, in the latter case, the
distribution is determined by collection of marginals along all directions
in the plane (its Radon transform); this fact is exploited in PET tomography
which aims at estimating the 2D distribution from samples of its Radon
transform. In the quantum case, since $Q$ and $P$ do not commute with each
other, they cannot be measured simultaneously and cannot be assigned a joint
distribution in a meaningful way. However, the ``quasi-distribution''
defined below has some of the desired properties, and is very helpful in
visualising the quantum state.
\begin{definition}
For any state $\rho\in \mathcal{T}_1(L_2(\mathbb{R}))$ we define the \emph{%
quantum characteristic function} of $\rho$ 
\begin{equation*}
\widetilde{W}_\rho(u,v) := \mathrm{Tr} (\exp(-iuQ - ivP)\rho). 
\end{equation*}
The inverse Fourier transform of $\widetilde{W}_\rho$ with respect to both
variables is called Wigner function $W_\rho$, or quasi-distribution
associated to $\rho$:
\begin{equation*}
W_\rho(q,p) = \frac{1}{(2\pi)^2} \int\!\int \exp(iuq+ ivp) \widetilde{W}%
_\rho(u,v) du dv.
\end{equation*}
\end{definition}
A consequence of this definition is that the marginal of $W_\rho(q,p)$ along
an arbitrary direction with angle $\phi$ is the probability density of the
quadrature $X_\phi$ introduced above. This is the basis of a quantum state
estimation scheme called ``quantum homodyne tomography'' \cite%
{Leonhardt,Artiles&Gill&Guta}, where the Wigner function plays the role of
the 2D distribution from ``classical'' PET tomography. One of the important
differences however, is that the Wigner functions need not be positive in
general, and satisfy other constraints which are specific to the quantum
setting and can be exploited in the estimation procedure.%

The Wigner function representation offers an intuitive route to defining the
notion of Gaussian state.

\begin{definition}
A state $\rho$ of a one-mode cv system is called Gaussian if its Wigner
function $W_\rho$ is a Gaussian probability density, or equivalently if it
has the quantum characteristic function
\begin{equation*}
\widetilde{W}_\rho (u,v) = \exp \left(-(u,v) \frac{V}{2} (u,v)^T
\right) \cdot \exp (iu q_0 + iv p_0).
\end{equation*}
where $(q_0, p_0)\in \mathbb{R}^2$ and $V$ (a real positive $2\times2$ matrix) are the
mean and variance of $W_\rho$, respectively.
\end{definition}

In particular, all the quadratures $X_\phi$ of a Gaussian state have
Gaussian distribution. As consequence of the commutation relation $QP-PQ= i%
\mathbf{1}$ the observables $Q$ and $P$ cannot have arbitrarily small
variance simultaneously; in particular, the covariance matrix $V$ must
satisfy the ``uncertainty principle'' $\mathrm{Det}(V) \geq 1/4$, where
the equality is achieved if and only if the state is a pure Gaussian state.

We will be particularly interested in \emph{coherent} states $|G(z)\rangle $ which are \emph{%
pure} Gaussian states whose Wigner functions have covariance matrix $V = \mathrm{I}_2 /2$, where $I_2$ is the $2 \times 2 $ identity matrix.
To give a concrete Hilbert space representation,
it is convenient to introduce a special orthonormal basis of $L_2(\mathbb{R})
$, consisting of the eigenvectors $\{|0\rangle, |1\rangle, \dots\}$ of the
\emph{number operator} $N= a^*a$, with $N |k\rangle = k|k\rangle$. Here, the
operators $a^*= (Q-iP)/\sqrt{2}$ and $a= (Q+iP)/\sqrt{2}$ are called \emph{%
creation and annihilation operators} and act as ``ladder operators'' on the
number basis vectors (or Fock states)
\begin{equation*}
a|k\rangle = \sqrt{n}|k-1\rangle, \qquad a^*|k\rangle = \sqrt{k+1}%
|k+1\rangle. 
\end{equation*}
The coherent states denoted by $|G(z)\rangle $ are obtained
by applying the unitary Weyl (displacement) operators to the vacuum state  $|0\rangle $
\begin{equation}\label{eq.coherent.state}
| G(z)\rangle = \exp\left( za^* -\bar{z}a\right) |0\rangle =
\exp(-|z|^2/2)\sum_{k=0}^\infty \frac{z^k}{\sqrt{k!}} |k\rangle,
\end{equation}
where $z\in \mathbb{C}$ is the eigenvalue of the annihilation operator
$a | G(z)\rangle = z | G(z)\rangle$; in particular, the quadrature means are
$\langle G(z)|Q|G(z)\rangle= \sqrt{2} {\rm Re}(z) $ and $\langle G(z)|P|G(z)\rangle = \sqrt{2}{\rm Im}(z)$, and the Wigner function is given by
\begin{equation}\label{eq.wigner.coherent}
W_{|z\rangle }(q,p)=\frac{1}{\pi}\exp \left( -(q-\sqrt{2}x)^{2}-(p-\sqrt{2}%
y)^{2}\right) ,\quad q,\,p\in \mathbb{R}.
\end{equation}%

Equation \eqref{eq.coherent.state} implies that the number operator $N$ has a Poisson distribution with
mean $|z|^2$. Additionally, it can be seen from the Fourier expansion in the
second equality that the unitary $\Gamma(\phi) = \exp(i\phi N)$ acts by
rotating the coherent states by an angle $\phi$ in the complex plane, i.e. $\Gamma(\phi) |G(z)\rangle= |G(e^{i\phi} z)\rangle$.

Another important class of Gaussian states are the mixed diagonal states
\begin{equation}\label{eq.thermal.state}
\Phi(r) = (1-r)\sum_{k=0}^\infty  r^k | k \rangle\langle k | , \quad 0< r <1
\end{equation}
which are also called thermal states, cf. section 3.3 in \cite{Leonhardt}. The corresponding Wigner function is a centred Gaussian \begin{equation*}
W_{\Phi(r)}(q,p) = \frac{1}{2\pi \sigma^2(r)} \exp\left( - \frac{q^2 +p^2 }{ 2 \sigma^2(r)} \right).
\end{equation*}
with covariance matrix $V= \sigma^2(r)\cdot I_2$ where $\sigma^2(r) = \frac{1}{2}\frac{1+r}{1-r}$.

\begin{proposition}
 Consider the family of coherent states $ \{|G(z)\rangle\langle G(z)|, \, z \in \mathbb{C}\}$, with random displacement (location) $z$ distributed according to $\Pi(dz)$, having a Gaussian law with covariance matrix $\sigma^2 \cdot I_2 $. Then, the mixed state $\Phi = \int  |G(z)\rangle\langle G(z)| \Pi(dz)$ is the thermal state
 $
 \Phi(r)$ with $r = \frac{2 \sigma^2}{2 \sigma^2 +1}$.
\end{proposition}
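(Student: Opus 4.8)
The plan is to exploit the fact that the thermal state $\Phi(r)$ in \eqref{eq.thermal.state} is diagonal in the number (Fock) basis, and to verify directly that the Fock-basis matrix elements of $\Phi$ coincide with those of $\Phi(r)$ for the claimed value of $r$. Concretely, I would compute $\langle j|\Phi|k\rangle$ for all $j,k\geq 0$ and check that the off-diagonal entries vanish while the diagonal entries equal $(1-r)r^{k}$. This settles the identification of the state without having to appeal separately to any uniqueness statement.

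First I would insert the Fock expansion \eqref{eq.coherent.state}, which gives $\langle k|G(z)\rangle = e^{-|z|^2/2}\,z^{k}/\sqrt{k!}$, and interchange the defining integral $\Phi=\int |G(z)\rangle\langle G(z)|\,\Pi(dz)$ with the inner products. This interchange is a Fubini argument, legitimate because each $|G(z)\rangle\langle G(z)|$ has unit trace and $\Pi$ is a probability measure, so the integral converges in trace norm and the matrix elements of the integral are the integrals of the matrix elements. This yields
\begin{equation*}
\langle j|\Phi|k\rangle = \frac{1}{\sqrt{j!\,k!}}\int_{\mathbb{C}} e^{-|z|^2}\, z^{j}\,\bar z^{k}\,\Pi(dz),\qquad \Pi(dz)=\frac{1}{2\pi\sigma^2}\,e^{-|z|^2/(2\sigma^2)}\,dz,
\end{equation*}
with $dz=dx\,dy$.

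Next I would pass to polar coordinates $z=\varrho e^{i\theta}$. Since $z^{j}\bar z^{k}=\varrho^{\,j+k}e^{i(j-k)\theta}$ and the Gaussian weight depends only on $\varrho$, the angular integral $\int_0^{2\pi} e^{i(j-k)\theta}\,d\theta=2\pi\,\delta_{jk}$ immediately annihilates the off-diagonal terms, reflecting the rotational invariance of $\Pi$. On the diagonal the remaining radial integral is a Gamma integral: after the substitution $s=\varrho^2$ one is left with $\int_0^\infty s^{k} e^{-as}\,ds=k!/a^{k+1}$ where $a=1+1/(2\sigma^2)=(2\sigma^2+1)/(2\sigma^2)$. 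Collecting constants gives $\langle k|\Phi|k\rangle=(2\sigma^2+1)^{-1}\,r^{k}$ with $r=2\sigma^2/(2\sigma^2+1)$; since $1-r=(2\sigma^2+1)^{-1}$ this is exactly $(1-r)r^{k}$, matching \eqref{eq.thermal.state}.

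As a cross-check I would mention the Wigner-function route: by linearity of the trace, $W_\Phi=\int W_{|G(z)\rangle}\,\Pi(dz)$ is a convolution of the coherent Gaussian \eqref{eq.wigner.coherent} (phase-space covariance $I_2/2$) with the Gaussian of the displaced centres (covariance $2\sigma^2 I_2$, the factor $2$ coming from the $\sqrt2$ relating $z$ to the quadrature means), hence a centred Gaussian of covariance $(\tfrac12+2\sigma^2)I_2$; matching this to \eqref{eq.wigner.thermal} via $\sigma^2(r)=\tfrac12+2\sigma^2$ reproduces the same $r$. The only genuine technical point in either route is the justification of the interchange of the integral with the trace or Fourier transform, and I expect this Fubini step to be the one line deserving care; the Wigner route additionally leans on the fact that the Wigner function determines the state, whereas the direct matrix-element computation avoids that dependence. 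Everything else is the radial Gamma integral and the elementary algebraic simplification of $r$.
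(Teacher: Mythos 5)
Your proof is correct, and it takes a genuinely different route from the paper. The paper works entirely in phase space: it integrates the coherent-state Wigner functions \eqref{eq.wigner.coherent} against the Gaussian prior (a Gaussian convolution, computed explicitly in \eqref{eq.twirling}), obtains a centred Gaussian of covariance $(2\sigma^2+\tfrac12)I_2$, and identifies this with the thermal Wigner function \eqref{eq.wigner.thermal} -- implicitly invoking the fact that the Wigner function (equivalently the quantum characteristic function) determines the state. Your main argument instead computes the Fock-basis matrix elements $\langle j|\Phi|k\rangle$ directly: the rotational invariance of the prior kills the off-diagonal entries through the angular integral $\int_0^{2\pi} e^{i(j-k)\theta}d\theta = 2\pi\delta_{jk}$, and the radial Gamma integral yields exactly the geometric weights $(1-r)r^k$ with $r = 2\sigma^2/(2\sigma^2+1)$; I checked the constants and they are right, including $1-r = (2\sigma^2+1)^{-1}$. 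What your route buys is self-containedness: equality of all matrix elements in an orthonormal basis identifies the operator outright, with no appeal to injectivity of the state-to-Wigner-function map, and it makes transparent \emph{why} $\Phi$ is diagonal (rotational symmetry of $\Pi$). What the paper's route buys is brevity given that \eqref{eq.wigner.coherent} and \eqref{eq.wigner.thermal} are already on record, plus easier generalisation (an anisotropic prior would still give a Gaussian Wigner function, whereas your angular argument would break). Your Fubini justification is adequate -- the integrand's matrix elements are bounded by $1$ and $\Pi$ is a probability measure -- and your phase-space cross-check, including the factor $2$ relating $\mathrm{cov}(z)=\sigma^2 I_2$ to the quadrature-mean covariance $2\sigma^2 I_2$, agrees with the paper's computation.
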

\begin{proof}
Consider the corresponding Wigner function
\begin{eqnarray}
W_{\Phi}(q,p)&=&\int W_{|G(z)\rangle }(q,p)\exp \left( -%
\frac{1}{2\sigma^{2}}(x^{2}+y^{2})\right) \frac{1}{2\pi \sigma^{2}}\,dx dy \nonumber \\
&=&\frac{1}{\pi \sigma^{2}}\int \exp \left( -(q-\sqrt{2}\,x)^{2}-%
\frac{x^{2}}{2\sigma^{2}}\right) \frac{dx}{\sqrt{2\pi }} \nonumber \\
&\times& \int \exp \left( -(p-\sqrt{2}\,y)^{2}-\frac{y^{2}}{2\sigma^{2}}%
\right) \frac{dy}{\sqrt{2\pi }}  \nonumber\\
&=&\frac{1}{\pi (4\sigma^{2}+1)}\exp \left( -\frac{q^{2}+p^{2}}{%
2(2\sigma^{2}+1/2)}\right) .\label{eq.twirling}
\end{eqnarray}%
Therefore, the state $\Phi$ is identical to the thermal state $\Phi(r)$ with $2 \sigma^2+\frac 12 = \frac 12 \frac{1+r}{1-r}$, or equivalently $r= \frac{2\sigma^2}{1+2\sigma^2}$.
\end{proof}
This fact will be used later on in in section \ref{sec.applications} in applications to functional estimation and testing.

\subsubsection{Fock spaces and second quantisation}

\label{sec.Fock.spaces} 

The above construction can be generalised to multimode systems by tensoring
several one-mode systems. Thus, the Hilbert space of a $k$-mode system is $%
L_2(\mathbb{R})^{\otimes k} \cong L_2(\mathbb{R}^{k})$, upon which we define
``canonical pairs'' $(Q_i, P_i)$ acting on the $i$-th tensor as above, and
as identity on the other tensors. Similarly we define the one-mode operators
$a_i, a^*_i, N_i$. The number basis consists now of tensor products $|%
\mathbf{n}\rangle:= \otimes_{i=1}^k |n_i\rangle$ indexed by the sequences of
integers $\mathbf{n} = (n_1, \dots, n_k)$. A multimode coherent state is a
tensor product of one-mode coherent states
\begin{eqnarray}  \label{eq.coherent.multimode}
| G(\mathbf{z})\rangle
& =& \otimes_{i=1}^k |G(z_i)\rangle = \exp\left( \mathbf{%
z}\mathbf{a}^\dagger -\mathbf{a}\mathbf{z}^\dagger \right) |0\rangle \nonumber \\
&=& \exp(-|z|^2/2)\sum_{\mathbf{n}=0}^\infty \left(\prod_{i=1}^k \frac{z_i^n} {%
\sqrt{n_i!}} \right) |\mathbf{n}\rangle\in L_2(\mathbb{R})^{\otimes k}
\end{eqnarray}
where $\mathbf{z} = (z_1, \dots , z_k)$ is the vector of means, $\mathbf{a}
= (a_1, \dots, a_k)$, and $\dagger$ denotes the transposition and adjoint
(complex conjugation) of individual entries.

We will now extend this construction to systems with infinitely many modes.
One way to do this is by defining an infinite tensor product of one-mode
spaces, as completion of the space spanned by tensors in which all but a
finite number of modes are in the vacuum state. Instead, we will present an
equivalent but more elegant construction called \emph{second quantisation}
which will be useful for later considerations.

\begin{definition}
Let $\mathcal{K}$ be a Hilbert space. The \emph{Fock space} over $\mathcal{K}
$ is the Hilbert space
\begin{equation}  \label{eq.Fock.space}
\mathcal{F}(\mathcal{K}) = \bigoplus_{n \geq 0} \mathcal{K}^{\otimes_s n}
\end{equation}
where $\mathcal{K}^{\otimes_s n}$ denotes the $n$-fold symmetric tensor
product, i.e. the subspace of  $\mathcal{K}^{\otimes n}$ consisting of
vectors which are symmetric under permutations of the tensors. The term  $%
\mathcal{K}^{\otimes_s 0} =: \mathbb{C}|0\rangle $ is called the vacuum
state.
\end{definition}

In this definition the space $\mathcal{K}$ should be regarded as the
``space of modes'' rather than physical states. As we will see below, by
fixing an orthonormal basis in $\mathcal{K}$, we can establish an
isomorphism between the Fock space $\mathcal{F}(\mathcal{K})$ and a tensor
product of one-mode cv spaces, one for each basis vector. In particular, if
$\mathcal{K}= \mathbb{C}$, then $\mathcal{F}(\mathbb{C})\cong L^2(\mathbb{R})
$ so that the one-dimensional subspaces in the direct sum in %
\eqref{eq.Fock.space} correspond to the number basis vectors $|0\rangle,
|1\rangle, \dots \in L^2(\mathbb{R})$ of a one-mode cv system.

We now introduce the general notion of coherent state on a Fock space.
\begin{definition}
Let $\mathcal{F}(\mathcal{K})$ be the Fock space over $\mathcal{K}$. For
each $|v\rangle \in\mathcal{K}$ we define an associated coherent state
\begin{equation*}
|G(v) \rangle := e^{-\|v\|^2/2} \bigoplus_{n \geq 0} \frac{1}{\sqrt{n!}}%
|v\rangle^{\otimes n}\in \mathcal{F}(\mathcal{K}).
\end{equation*}
\end{definition}
The coherent vectors form a dense subspace of $\mathcal{F}(\mathcal{K})$.
This fact can be used to prove the following factorisation property, and to
define the annihilation operators below. Let $\mathcal{K}= \mathcal{K}_0
\oplus \mathcal{K}_1$ be a direct sum decomposition of $\mathcal{K}$ into
orthogonal subspaces, and let $|v\rangle = |v_0\rangle\oplus|v_1\rangle$ be
the decomposition of a generic vector $|v\rangle\in\mathcal{K}$. Then the
map
\begin{eqnarray*}
U: \mathcal{F}(\mathcal{K}) &\to& \mathcal{F}(\mathcal{K}_0)\otimes \mathcal{%
F}(\mathcal{K}_1) \\
U: |G(v) \rangle &\mapsto & |G(v_0)\rangle \otimes |G(v_1)\rangle
\end{eqnarray*}
is unitary. We will use this correspondence to identify $\mathcal{F}(%
\mathcal{K})$ with the tensor product $\mathcal{F}(\mathcal{K}_0)\otimes
\mathcal{F}(\mathcal{K}_1)$. By the same argument, for any orthonormal basis
$\{ |e_1\rangle, |e_2\rangle,\dots \}$ of $\mathcal{K}$, the Fock space $%
\mathcal{F}(\mathcal{K})$ is isomorphic with the tensor product of one mode
spaces $\mathcal{F}_i: =\mathcal{F}( \mathbb{C}|e_i\rangle)$ and the
coherent states factorise as
\begin{eqnarray}
\mathcal{F}(\mathcal{K}) &\cong& \bigotimes_i \mathcal{F}_i  \notag \\
|G(u) \rangle &\cong& \bigotimes_i |G(u_i) \rangle ,\qquad u_i = \langle
e_i|u\rangle.  \label{eq.factorisation.Gaussian}
\end{eqnarray}
so that we recover the formula \eqref{eq.coherent.multimode}.

We define the \emph{annihilation operators} through their action on coherent states
as follows: for each mode $|u\rangle \in \mathcal{K}$ the associated
annihilator $a(u):\mathcal{F}(\mathcal{K})\to\mathcal{F}(\mathcal{K})$ is
given by
\begin{equation*}
a(u): |G(v)\rangle = \langle u|v\rangle |G(v)\rangle , \qquad |v\rangle\in
\mathcal{K}.
\end{equation*}
Then the annihilation and (their adjoint) the creation operators satisfy the
commutation relations
\begin{equation*}
a(u) a^* (w) - a^* (w) a(u) = \langle u|v\rangle \mathbf{1}.
\end{equation*}
For each mode we can also define the canonical operators $Q(u), P(u)$ and
the number operator $N(u)$ in terms of $a(u), a^*(u)$ as in the one-mode
case. Moreover, if $|u\rangle = |u_0 \rangle \oplus |u_1 \rangle $ is the
decomposition of $|u\rangle$ as above, then $a(u_0)$ acts as $a(u_0)\otimes%
\mathbf{1}_{\mathcal{F}(\mathcal{K}_1)}$, when the Fock space is represented
in the tensor product form. Similar decompositions hold for $%
a^*(u_0),N(u_0), a(u_1), a^*(u_1), N(u_1)$.

The second quantisation has the following functorial properties which will
be used later on.
\begin{definition}
Let $W: \mathcal{K}\to\mathcal{K}$ be a unitary operator. The quantisation
operator $\Gamma(W)$ is the unitary defined by $\Gamma(W):\mathcal{F}(%
\mathcal{K})\to\mathcal{F}(\mathcal{K})$ by
\begin{equation*} 
\Gamma(W) : = \bigoplus_{n\geq 0} W^{\otimes n}
\end{equation*}
where $W^{\otimes n}$ acts on the $n$-th level of the Fock space $\mathcal{K}%
^{\otimes_s n}$.
\end{definition}
From the definition it follows that the action of $\Gamma(W)$ on coherent
states is covariant in the sense that
\begin{eqnarray*}
\Gamma(W): \mathcal{F}(\mathcal{K}) &\to & \mathcal{F}(\mathcal{K}) \\
\Gamma (W): |G(v) \rangle & \mapsto & |G(Wv)\rangle.
\end{eqnarray*}
In particular, it follows from the definitions that $\Gamma(e^{i\phi}\mathbf{%
1}) = \exp(i\phi N)$, where $N$ is the total number operator, whose action
on the $n$-th level of the Fock space is $N |v\rangle^{\otimes n} = n
|v\rangle^{\otimes n}$. Note that while $|v\rangle$ and $e^{i\phi}|v\rangle$
differ only by a phase and hence represent the same state, the corresponding
coherent states $|G(v)\rangle$ and $\Gamma(e^{i\phi}) |G(v)\rangle =
|G(e^{i\phi} v)\rangle$ are linearly independent and represent different
states.

As in the single mode case, the coherent states can be obtained by acting
with the unitary displacement (or Weyl) operators onto the vacuum
\begin{equation*}
|G(u)\rangle = \exp (a^*(u) - a(u) ) |0\rangle
\end{equation*}
Moreover, the coherent states $|G(u)\rangle$ are Gaussian with respect to all
coordinates. The means of annihilation operators are given by $\langle G(u)
|a(w)|G(u)\rangle = \langle w |v\rangle $, from which we can deduce that the
the coordinates $(Q(w), P(w))$ have means $(\sqrt{2} \mathrm{Re} \langle w
|u\rangle , \sqrt{2} \mathrm{Im} \langle w |u\rangle)$. The covariance of
coherent states is constant (independent of the displacement $u$), and is
given by $\langle 0 | a(w) a^*(v)|0 \rangle = \langle w|v\rangle$. This
implies that orthogonal modes (i.e. $\langle w|v\rangle=0)$ have independent
pairs of coordinates.



\subsection{Metrics on the space of states}
\label{sec.metrics}

For future reference we review here the states space metrics used in the
paper. Recall that the space of states $\mathcal{S(\mathcal{H})}$ on a
Hilbert space $\mathcal{H}$ is the cone of positive, trace one operators in $%
\mathcal{T}_1(\mathcal{H})$. The \emph{norm-one} (or trace-norm) distance between two states $%
\rho_0,\rho_1\in \mathcal{S(\mathcal{H})}$ is given by
\begin{equation*}
\|\rho_0 - \rho_1\|_1 := \mathrm{Tr} (| \rho_0 - \rho_1|)
\end{equation*}
where $|\tau| := \sqrt{\tau^* \tau}$ denotes the absolute value of $\tau$.
The norm-one distance can be interpreted in terms of  the maximum difference between
expectations of bounded observables
\begin{equation*}
\|\rho_0 - \rho_1\|_1 = 2 \sup_{A: \|A\|\leq 1} | \mathrm{Tr}(\rho_0 A) -
\mathrm{Tr}(\rho_1 A) |.
\end{equation*}
Another interpretation is in terms of quantum testing. Let $M= (M_0, M_1)$
be a binary POVM used to test between hypotheses $H_0: = \{\text{measured
state is } \rho_0\}$ and $H_1: = \{\text{measured state is } \rho_1\}$.
The sum of error probabilities is
\begin{equation*}
\mathbb{P}^M_e = \mathrm{Tr} (M_0 \rho_1) + \mathrm{Tr} (M_1 \rho_0).
\end{equation*}
By optimizing over all possible POVM we obtain \cite{Helstrom} the optimal
error probability sum
\begin{equation}  \label{HHbound}
\mathbb{P}^*_e:= \inf_M \mathbb{P}^M_e  = 1 - \frac{1}{2} \|\rho_0 - \rho_1 \|_1.
\end{equation}
In the special case of pure states, the norm-one distance is given by
\begin{equation}  \label{eq.trace.norm.pure}
\| |\psi_0 \rangle\langle \psi_0| - |\psi_1 \rangle\langle \psi_1 |\|_1 = 2
\sqrt{1 - |\langle\psi_0 | \psi_1 \rangle|^2},
\end{equation}
as proven e.g. in \cite{Kargin-Chern}. The previous formula becomes for coherent states
\begin{equation*}
\| |G(\psi_0) \rangle\langle G (\psi_0) | - |G(\psi_1) \rangle\langle
G(\psi_1) |\|_1 = 2 \sqrt{1 - \exp(- \|\psi_0 - \psi_1 \|^2)}.
\end{equation*}

\noindent The second important metric is the \emph{Bures distance} whose square is given by
\begin{equation*}
d^2_b (\rho_0, \rho_1) := 2 (1 - \mathrm{Tr} \left( \sqrt{\sqrt{\rho_0}
\rho_1 \sqrt{\rho_0}})\right)
\end{equation*}
and is a quantum extension of the Hellinger distance. In the case of pure
states the Bures distance becomes
\begin{equation}  \label{eq.Bures.normone.pure}
d_b^2( |\psi_0 \rangle\langle \psi_0| \, , \, |\psi_1 \rangle\langle \psi_1|
) = 2(1 -|\langle\psi_0 | \psi_1 \rangle| )
\end{equation}
so for coherent states it is given by
\begin{equation*}
d^2_b \left(|G(\psi_0) \rangle\langle G (\psi_0) | \, ,\, |G(\psi_1)
\rangle\langle G(\psi_1) |\right) := 2 \left(1 - \exp \left( -\frac{1}{2} \|
\psi_0-\psi_1 \|^2\right)\right).
\end{equation*}

Similarly to the classical case, the following inequality holds for arbitrary states \cite{Fuchs}
\begin{equation}  \label{eq.distances.inequality}
d_b^2(\rho_0, \rho_1)\leq \|\rho_0-\rho_1 \|_1 \leq  2 d_b(\rho_0,
\rho_1).
\end{equation}
Moreover, since $|\langle\psi_0|\psi_1\rangle |^2 \leq |\langle\psi_0|\psi_1\rangle |$, the additional inequality holds for \emph{pure} states
\begin{equation}  \label{eq.distances.inequality2}
\|\rho_0 -\rho_1 \|_1 \geq \sqrt{2} d_b(\rho_0, \rho_1).
\end{equation}
This means that for pure states, the trace and Bures distances are equivalent (up to constants).

Finally, we will be using the fact that both the norm-one and the Bures
distance are contractive under quantum channels. $T:\mathcal{T}_1(\mathcal{H}%
)\to \mathcal{T}_1(\mathcal{H}^\prime)$, i.e.
\begin{equation*}
\|T(\rho_0) - T(\rho_1)\|_1 \leq \|\rho_0 - \rho_1\|_1, \qquad
d_b^2(T(\rho_0), T(\rho_1)) \leq d_b^2(\rho_0, \rho_1).
\end{equation*}


\section{Quantum statistical models}
\label{sec.statistical.models}

In this section we review key elements of quantum statistics, and introduce
the quantum statistical models which will be analysed later on. For comparison,
we briefly review certain asymptotic equivalence results for related
classical statistical models.


 The classical density model consists of $n$
observations $X_{1},\ldots,X_{n}$ which are independent, identically
distributed (i.i.d.) with common probability density $f$. In the
Gaussian white noise model, a function
$g  \in \mathbb{L}_{2}[0,1]$ is observed with Gaussian white noise of variance $n^{-1}$, i.e.
\begin{equation}
dY_{t}=g(t)dt+\frac{1}{\sqrt{n}}dW_{t},\quad t\in\lbrack0,1].\label{gwm}%
\end{equation}
This model is equivalent to the  Gaussian sequence model, where we observe a sequence of
Gaussian random variables with means equal to the coefficients $ \theta_{j} $ of $g$ in
some orthonormal basis of $\mathbb{L}_{2}[0,1]$
\begin{equation}
y_{j}=\theta_{j}+\frac{1}{\sqrt{n}}\xi_{j},\qquad i=1,2,\ldots
\label{eq.gsm}%
\end{equation}
where $\{\xi_{i}\}_{i\geq1}$ are Gaussian i.i.d. random variables.

In \cite{Nussbaum96} it was shown that for densities $f$  on $[0,1]$,   the  i.i.d.  model is  asymptotically equivalent to the white noise model \eqref{gwm} for $g= f^{1/2} $,  in the sense that
the Le Cam distance of the models converges to zero as $n\rightarrow\infty$ when $f$ varies in a certain smoothness class of functions.
For  recent related results and extensions cf.  \cite{RaySchmidt};
in  Appendix A.1  \cite{Butucea&Guta&Nussbaum} we present a more detailed  review of asymptotic equivalence results for classical statistical models.

\subsection{Quantum models, randomisations and convergence}
\label{sec.quantum.models}

In this subsection we introduce the basic notions of a theory of quantum
statistical models which is currently still in its early stages, cf. \cite%
{GutaJencova2007,Gill&Guta} for more details. We will focus on the notions
of quantum-to-classical randomisation carried out through measurements, and
quantum-to-quantum randomisations implemented by quantum channels, which
allow us to define the equivalence and the Le Cam distance between models.

In analogy to the classical case, we make the following definition.
\begin{definition}
A quantum statistical model over a parameter space $\Theta$ consists of a
family of quantum states $\mathcal{Q} = \{\rho_\theta :\, \theta\in \Theta \}
$ on a Hilbert space $\mathcal{H}$, indexed by an unknown parameter $%
\theta\in \Theta$.
\end{definition}
A simple example is a family of pure states $\{ \rho_{\theta}= |\psi_\theta
\rangle \langle\psi_\theta | : \, \theta\in \mathbb{R}\}$ with $|\psi_\theta
\rangle:= \exp(i\theta H)|\psi\rangle$, where $H$ is a selfdajoint operator
generating the one-dimensional family of unitaries $\exp(i\theta H)$, and $%
|\psi\rangle\in \mathcal{H}$ is a fixed vector. Physically, the parameter $%
\theta$ could be for instance time, a phase, or an external magnetic field.
Another example is that of a completely unknown state of a finite
dimensional system, which can be parametrised in terms of its density matrix
elements, or the eigenvalues and eigenvectors. In order to increase the
estimation precision one typically prepares a number $n$ of identical and
independent copies of the state $\rho_\theta$, in which case the
corresponding model is $\mathcal{Q}_n:= \{\rho_\theta^{\otimes n} :\,
\theta\in \Theta \}$. Our work deals with \emph{non-parametric} quantum
statistical models for which the underlying Hilbert space is infinite
dimensional, as we will detail below.

In order to obtain information about the parameter $\theta$, we need to
perform measurements on the system prepared in $\rho_\theta$. Using the
random measurement data, we then employ statistical methods to solve
specific decision problems. For instance, the task of estimating an unknown
quantum state (also known as quantum tomography) is a key component of
quantum engineering experiments \cite{Blatt}. In particular, the estimation
of large dimensional states has received significant attention in the
context of compressed sensing \cite{Gross,Flammia&Gross}, and estimation of low rank
states \cite{ButuceaGutaKypraios}. Suppose that we perform a measurement $M$
on the system in state $\rho_\theta$, and obtain a random outcome $O\in
\Omega$ with distribution $\mathbb{P}^M_\theta(E) := \mathrm{Tr}(\rho_\theta
M(E))$, cf. section \ref{sec.QM}. The measurement data is therefore
described by the classical model $\mathcal{P}^M := \{ \mathbb{P}^M_\theta
:\, \theta\in \Theta\}$, and the estimation problem can be treated using
``classical'' statistical methods. The measurement map
\begin{eqnarray*}
\mathcal{M}&:&\mathcal{T}_1 \to L_1(\Omega, \Sigma, \mathbb{P}) \\
\mathcal{M}&:&\rho_\theta\mapsto p_\theta := \frac{d\mathbb{P}_\theta}{d\mathbb{P}}
\end{eqnarray*}
can be seen as a randomisation from a quantum to a classical model, which
intuitively means that $\mathcal{Q}$ is more informative that $\mathcal{P}^M$
for any measurement $M$. Here $\mathbb{P}$ can be chosen to be the
distribution corresponding to an arbitrary full rank (strictly positive)
state $\rho$ which insures the existence of all probability densities $%
p_\theta$. One of the distinguishing features of quantum statistics is the
possibility to choose appropriate measurements for specific statistical
problems (e.g. estimation, testing) and the fact that optimal measurements
for different problems may be incompatible with each other. In the
applications section we will discuss specific instances of this phenomenon.

Beside measurements, the quantum model $\mathcal{Q}$ can be transformed into
another quantum model $\mathcal{Q}^\prime := \{ \rho_\theta^\prime :\theta
\in \Theta \}$ on a Hilbert space $\mathcal{H}^\prime$ by means of a \emph{%
quantum randomisation}, i.e. by applying a quantum channel
\begin{eqnarray*}
T&:& \mathcal{T}_1(\mathcal{H}) \to \mathcal{T}_1(\mathcal{H}^\prime) \\
T&:& \rho_\theta \mapsto \rho_\theta^\prime.
\end{eqnarray*}
The model $\mathcal{Q}^\prime$ is less informative than $\mathcal{Q}$ in the
sense that for any measurement $M^\prime$ on $\mathcal{H}^\prime$ one can
construct the measurement $M:= M^\prime\circ T$ on $\mathcal{H}$ such that $%
\mathbb{P}^{M^\prime}_\theta = \mathbb{P}^{M}_\theta$ for all $\theta$. If
there exists another channel $S$ such that $S(\rho_\theta^\prime) =
\rho_\theta$ for all $\theta$ we say (in analogy to the classical case) that
the models $\mathcal{Q}$ and $\mathcal{Q}^\prime$ are \emph{equivalent}; in
particular, for any statistical decision problem, one can match a procedure
for one model with a procedure with the same risk, for the other model. A
closely related concept is that of \emph{quantum sufficiency} whose theory
was developed in \cite{JencovaPetz}. More generally, we define the Le Cam
distance in analogy to the classical case \cite{LeCam}.

\begin{definition}
Let $\mathcal{Q}$ and $\mathcal{Q}^\prime$ be two quantum models over $\Theta
$. The deficiency between $\mathcal{Q}$ and $\mathcal{Q}^\prime$ is defined
by
\begin{equation*}
\delta \left(\mathcal{Q},\mathcal{Q}^\prime\right) := \inf_T
\sup_{\theta\in\Theta} \| T(\rho_\theta) - \rho_\theta^\prime\|_1
\end{equation*}
where the infimum is taken over all channels $T$. The \emph{Le Cam} distance
between $\mathcal{Q}$ and $\mathcal{Q}^\prime$ is defined as
\begin{equation}  \label{eq.q.LeCam}
\Delta \left(\mathcal{Q},\mathcal{Q}^\prime\right):= \mathrm{max}\left(
\delta \left(\mathcal{Q},\mathcal{Q}^\prime\right)\,, \, \delta \left(%
\mathcal{Q}^\prime,\mathcal{Q}\right)\right).
\end{equation}
\end{definition}
Its interpretation is that models which are ``close'' in the Le Cam distance
have similar statistical properties. In practice, this metric is often used
to approximate a sequence of models by another sequence of simpler models,
providing a method to establish asymptotic minimax risks. In particular, the
approximation of i.i.d. quantum statistical models by quantum Gaussian ones
has been investigated in \cite{GutaKahn2006,GutaJencova2007,Kahn&Guta}, in
the case of finite dimensional systems with arbitrary mixed states. Our goal
is to extend these results to non-parametric models consisting of pure
states on infinite dimensional Hilbert spaces. The following lemma will be
used later on.

\begin{lemma}
\label{lemma.distance.channels} Let $\mathcal{Q},\mathcal{Q}^\prime$ be two
quantum models as defined above. Let $\rho_i = \sum_j \mu_{i,j}
\rho_{\theta_{i,j}} $ be two arbitrary mixtures ($i=1,2$) of states in $%
\mathcal{Q}$ and let $\rho^\prime_i = \sum_j \mu_{i,j}
\rho^\prime_{\theta_{i,j}} $ be their counterparts in $\mathcal{Q}^\prime$.
Then
\begin{equation*}
\| \rho^\prime_1 - \rho^\prime_2\|_1 - 2 \Delta(\mathcal{Q},\mathcal{Q}%
^\prime) \leq  \| \rho_1 - \rho_2\|_1 \leq \| \rho^\prime_1 -
\rho^\prime_2\|_1 + 2 \Delta(\mathcal{Q},\mathcal{Q}^\prime) .
\end{equation*}
\end{lemma}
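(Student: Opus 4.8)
The plan is to exploit two features of quantum channels recorded earlier in the excerpt: they are \emph{linear} maps, and they are \emph{contractive} for the trace norm, i.e. $\|T(\sigma_0) - T(\sigma_1)\|_1 \leq \|\sigma_0 - \sigma_1\|_1$. Since the deficiencies are defined through an infimum over channels, which need not be attained, I would first fix $\varepsilon > 0$ and choose near-optimal channels: a channel $S$ mapping $\mathcal{Q}^\prime$ into $\mathcal{Q}$ with $\sup_\theta \|S(\rho^\prime_\theta) - \rho_\theta\|_1 \leq \delta(\mathcal{Q}^\prime, \mathcal{Q}) + \varepsilon \leq \Delta(\mathcal{Q}, \mathcal{Q}^\prime) + \varepsilon$, and symmetrically a channel $T$ mapping $\mathcal{Q}$ into $\mathcal{Q}^\prime$ with $\sup_\theta \|T(\rho_\theta) - \rho^\prime_\theta\|_1 \leq \Delta(\mathcal{Q}, \mathcal{Q}^\prime) + \varepsilon$. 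The crucial structural point is that $\rho_i$ and $\rho^\prime_i$ are built from the \emph{same} mixing weights $\mu_{i,j}$, so that linearity lets me transport the pointwise deficiency bound to the mixtures term by term.

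For the upper bound I would first estimate, using linearity of $S$, the triangle inequality for the norm, and $\sum_j \mu_{i,j} = 1$,
\[
\|S(\rho^\prime_i) - \rho_i\|_1 = \Bigl\|\sum_j \mu_{i,j}\bigl(S(\rho^\prime_{\theta_{i,j}}) - \rho_{\theta_{i,j}}\bigr)\Bigr\|_1 \leq \sum_j \mu_{i,j}\, \|S(\rho^\prime_{\theta_{i,j}}) - \rho_{\theta_{i,j}}\|_1 \leq \Delta(\mathcal{Q}, \mathcal{Q}^\prime) + \varepsilon,
\]
valid for $i = 1, 2$. A triangle-inequality sandwich inserting $S(\rho^\prime_1)$ and $S(\rho^\prime_2)$, together with contractivity of $S$ applied to $\|S(\rho^\prime_1) - S(\rho^\prime_2)\|_1 \leq \|\rho^\prime_1 - \rho^\prime_2\|_1$, then gives
\[
\|\rho_1 - \rho_2\|_1 \leq \|\rho_1 - S(\rho^\prime_1)\|_1 + \|S(\rho^\prime_1) - S(\rho^\prime_2)\|_1 + \|S(\rho^\prime_2) - \rho_2\|_1 \leq \|\rho^\prime_1 - \rho^\prime_2\|_1 + 2\Delta(\mathcal{Q}, \mathcal{Q}^\prime) + 2\varepsilon.
\]
As the left-hand side is independent of $\varepsilon$, letting $\varepsilon \to 0$ yields the right-hand inequality in the statement.

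The left-hand inequality follows by running the identical argument with the two models interchanged: using the channel $T$ together with its contractivity I would obtain $\|\rho^\prime_1 - \rho^\prime_2\|_1 \leq \|\rho_1 - \rho_2\|_1 + 2\Delta(\mathcal{Q}, \mathcal{Q}^\prime)$, which rearranges to the claimed lower bound. There is no serious obstacle here; the lemma is essentially a bookkeeping exercise built on linearity and contractivity. The only points demanding care are that the infimum defining the deficiency need not be attained (handled by the $\varepsilon$-approximation and a final passage to the limit), and that the weights $\mu_{i,j}$ are genuine convex weights with $\sum_j \mu_{i,j} = 1$, which is precisely what allows the pointwise deficiency bound to pass to the mixtures and keeps $\rho_i, \rho^\prime_i$ bona fide states.
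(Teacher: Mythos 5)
Your proof is correct and follows essentially the same route as the paper's: a triangle-inequality sandwich through the (near-)optimal channel combined with contractivity of the trace norm, with the reverse inequality obtained by symmetry. In fact you are more careful than the paper, which silently skips both the $\varepsilon$-approximation of the non-attained infimum and the linearity-plus-convexity step showing $\|S(\rho^\prime_i)-\rho_i\|_1 \leq \Delta(\mathcal{Q},\mathcal{Q}^\prime)+\varepsilon$ for the mixtures.
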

\begin{proof}
Since quantum channels are contractive with respect to the norm-one
\begin{equation*}
\| S (\rho^\prime_1) - S (\rho^\prime_2) \|_1 \leq \|\rho^\prime_1
-\rho^\prime_2\|_1
\end{equation*}
and by the triangle inequality we get
\begin{eqnarray*}
\| \rho_1 - \rho_2\|_1
&\leq & \| \rho_1 - S (\rho^\prime_1) \|_1 + \| S
(\rho^\prime_1) - S (\rho^\prime_2) \|_1+ \|S (\rho^\prime_2) -\rho_2\|_1 \\
&\leq &2 \Delta(\mathcal{Q},\mathcal{Q}^\prime) +\|\rho^\prime_1
-\rho^\prime_2\|_1
\end{eqnarray*}
The second inequality can be shown in a similar way.
\end{proof}


\subsection{The i.i.d. and the quantum white noise models}

\label{sec.q.models} 
We now introduce the non-parametric quantum models investigated in the
paper.
Let $\mathcal{H}$ be an infinite dimensional Hilbert space and let $B:=
\{|e_0 \rangle , |e_1\rangle, \dots \}$ be a fixed orthonormal basis in $%
\mathcal{H}$. The Fourier decomposition of an arbitrary vector is written as
$|\psi\rangle = \sum_{j=0}^\infty \psi_j |e_j\rangle$. Since most of the
models will consist of pure states, we will sometimes define them in terms
of the Hilbert space vectors rather than the density matrices, but keep in
mind that the vectors are uniquely defined only up to a complex phase.

Let us consider the general problem of estimating an unknown pure quantum
state in $\mathcal{H}$. For \emph{finite} dimensional systems, the risk with
respect to typical rotation invariant loss functions scales linearly with
the number of parameters \cite{Gill&Massar}, hence with the dimension of the
space. Therefore, since $\mathcal{H}$ is infinite dimensional, it is not
possible to develop a meaningful estimation theory without any prior
information about the state. Motivated by physical principles and
statistical methodology we introduce the following \emph{Hermite-Sobolev
classes} \cite{BongioanniTorrea} and \cite{Bongioanni}
of pure states characterised by an appropriate decay of the coefficients
with respect to the basis $B$:
\begin{equation}  \label{eq:ClassStates}
S^\alpha(L) := \left\{ |\psi \rangle\langle \psi | :\, \sum_{j=0}^\infty
|\psi_j|^2 j^{2 \alpha} \leq L ,~ \mathrm{and} ~ \|\psi \|=1\right\}, \qquad
\alpha >0,\quad L>0.
\end{equation}
To gain some intuition about the meaning of this class, let us assume that $B
$ is the Fock basis of a one-mode cv system. Then the constraint translates
into the moment condition for the number operator $\langle \psi |
N^{2\alpha} |\psi\rangle \leq L$; this is a mild assumption considering that
all experimentally feasible states have finite moments to all orders. Even
more, the coefficients of typical states such as coherent, squeezed, and
Fock states decay exponentially with the photon number.

Our first model describes $n$ identical copies of a pure state belonging to
the Sobolev class
\begin{equation}  \label{eq.Q.n.model}
\mathcal{Q}_n : = \{ |\psi\rangle \langle \psi |^{\otimes n} \, :\,
|\psi\rangle\langle \psi| \in S^\alpha(L) \}.
\end{equation}
In section \ref{sec.estimation} we show that the minimax rate of $\mathcal{Q}%
_n$ for the norm-one and Bures distance loss functions is $%
n^{-\alpha/(2\alpha+1)}$. This is identical to the minimax rate of the
classical i.i.d. model described in Appendix A.1  \cite{Butucea&Guta&Nussbaum}.%

We now introduce the corresponding quantum Gaussian model. Let $\mathcal{F}%
:= \mathcal{F}(\mathcal{H})$ be the Fock space over $\mathcal{H}$, and let $%
\left \vert G(\sqrt{n}\psi)\right\rangle\in \mathcal{F}$ be the coherent
state with ``displacement'' vector $\sqrt{n}\psi$. As discussed in section %
\ref{sec.Fock.spaces}, the vector $\sqrt{n}\psi$ should be seen now as the
\emph{expectation} of the infinite dimensional Gaussian state rather than a
quantum state in itself, for which reason we have omitted the ket notation.
We define the coherent states model
\begin{equation}  \label{eq.G.n.model}
\mathcal{G}_{n}=\left\{ \left\vert G(\sqrt{n}\psi) \right\rangle
\left\langle G(\sqrt{n}\psi) \right| \, :\, |\psi\rangle \in \mathcal{H},
\mathrm{~such~ that~} |\psi\rangle\langle \psi | \in S^{\alpha}\left(
L\right) \right\} .
\end{equation}
Using the factorisation property \eqref{eq.factorisation.Gaussian} with
respect to the orthonormal basis $B$, we see that the model is equivalent to
the product of independent one-mode coherent Gaussian states of mean $\sqrt{n%
}\psi_i$
\begin{equation*}
\left|G(\sqrt{n}\psi)\right\rangle \cong \bigotimes_{i= 1}^\infty \left|G(%
\sqrt{n}\psi_i)\right\rangle
\end{equation*}
which is analogous to the classical Gaussian sequence model $\mathcal{N}_n$
defined in equation \eqref{eq.gsm}.

Similarly, we can draw an analogy with the white noise model $\mathcal{F}_n$
by realising $\mathcal{H}$ as $L^2([0,1])$. Let us define the \emph{quantum
stochastic process} \cite{Parthasarathy} on $\mathcal{F}(L^2([0,1]))$
\begin{equation*}
B(t) := a\left( \chi_{[0,t]} \right) + a^*\left( \chi_{[0,t]} \right)
\end{equation*}
and note that $[B(t), B(s)]=0$ for all $t,s\in [0,1]$ so that $\{B(t) \, :
\, t \in [0,1] \}$ is a commutative family of operators. This implies that $%
\{B(t) : t\in[0,1]\}$ have a joint probability distribution which is
uniquely determined by the quantum state, and can be regarded as a classical
stochastic process. If the state is the vacuum $|0\rangle$, the process is
Gaussian and has the same distribution as the Brownian motion. Consider now
the process $X(t):= W(\sqrt{n}\psi)^* B(t) W(\sqrt{n}\psi). $ which is
obtained by applying a unitary Weyl transformation to $B(t)$. In physics
terms we work here in the ``Heisenberg picture'' where the transformation
acts on operators while the state is fixed. Using quantum stochastic
calculus one can derive the following differential equation for $X(t)/\sqrt{n%
}$
\begin{equation*}
\frac{1}{\sqrt{n}} dX(t) = \psi(t) dt + \frac{1}{\sqrt{n}}dB(t).
\end{equation*}
Therefore, $X(t)/\sqrt{n}$ is similar to the process \eqref{gwm} with the
exception that it has a complex rather than real valued drift function. Note
that in this correspondence $\psi(t)$ plays the role of $f^{1/2}$, which
agrees with the intuitive interpretation of the wave function as square root
of the state $|\psi\rangle\langle \psi |$. Alternatively, one can use the
Schr\"{o}dinger picture, where the state is $|\sqrt{n}\psi\rangle = W(\sqrt{n%
}\psi) |0\rangle$, such that the process $B(t)$ has the same law as $X(t)$
under the vacuum state.

In section \ref{sec.estimation} we show that the minimax rate of $\mathcal{G}%
_n$ for loss functions based on the norm-one and the Bures distance, is $%
n^{-\alpha/(2\alpha+1)}$. Although the rate is identical to that of the
corresponding classical model, the result does not follow from the classical
case but relies on an explicit measurement strategy for the upper bounds,
and on the quantum local asymptotic equivalence Theorem \ref{th.qlan} for
the lower bound. Furthermore, the minimax rate for the estimation of certain
quadratic functionals are established in section \ref%
{sec.quadratic.functionals}, and the minimax testing rates are derived in
section \ref{sec.testing}. While the former are similar to the classical
ones, the quantum testing rates are parametric as opposed to non-parametric
in the classical case. This reflects the fact that in the quantum case, the
optimal measurements for different statistical problems are in general
incompatible with each other and in some cases they differ significantly
from what is expected on classical basis.

\section{Local asymptotic equivalence for quantum models}

\label{sec.QLAN} 

In this section we prove that the sequence \eqref{eq.Q.n.model} of
non-parametric pure states models is locally asymptotically equivalent (LAE)
with the sequence \eqref{eq.G.n.model} of quantum Gaussian models, in the
sense of the Le Cam distance. This is one of the main results of the paper
and will be subsequently used in the applications. Throughout the section $%
|\psi_0\rangle$ is a fixed but arbitrary state in an infinite dimensional
Hilbert space $\mathcal{H}$. We let $\mathcal{H}_0:= \{ |\psi \rangle \in
\mathcal{H} \, :\, \langle \psi_0 |\psi \rangle =0 \} $ denote the
orthogonal complement of $\mathbb{C}|\psi_0 \rangle$. Any vector state $%
|\psi\rangle \in \mathcal{H}$ decomposes uniquely as
\begin{equation}  \label{eq.psi-u}
|\psi\rangle = |\psi_{u} \rangle:= \sqrt{1- \|u\|^2} |\psi_0\rangle +
|u\rangle, \qquad |u\rangle \in \mathcal{H}_0
\end{equation}
where the phase has been chosen such that the overlap $\langle \psi
|\psi_0\rangle$ is real and positive. Therefore, the pure states are
uniquely parametrised by vectors $|u\rangle\in\mathcal{H}_0$.

Further to the i.i.d. and Gaussian models $\mathcal{Q}_n$ and $\mathcal{G}_n$
defined in \eqref{eq.Q.n.model} and respectively \eqref{eq.G.n.model}, we
now introduce their local counterparts which are parametrised by the local
parameter $|u\rangle$ rather than by $|\psi\rangle$. Let $\gamma_{n}$ be a
sequence such that $\gamma_n=o(1)$, and define the pure state models
\begin{eqnarray}  \label{eq.e.n}
\mathcal{Q}_n(\psi_0,\gamma_n ) &:=& \{ |\psi_{u}^{\otimes n}\rangle \in \mathcal{H}%
^{\otimes n}\, :\, |u\rangle \in \mathcal{H}_0 , \| u\| \leq \gamma_n \} \\
\mathcal{G}_n (\psi_0, \gamma_n) &:=& \{ |G(\sqrt{n} u) \rangle \in \mathcal{F}(%
\mathcal{H}_0) \, :\, |u\rangle \in \mathcal{H}_0 , \| u\| \leq \gamma_n \}.
\label{eq.g.n}
\end{eqnarray}
The LAE Theorem below shows that these local models are asymptotically
equivalent. An interesting fact is that LAE holds without imposing global
restrictions such as defined by the Sobolev classes, rather it suffices that
the local balls shrink at an arbitrary slow rate $\gamma_n = o(1)$.
This contrasts with the classical case where both types of conditions are
needed, as explained in Appendix A.1  \cite{Butucea&Guta&Nussbaum}. However, since
the state cannot be ``localised'' without any prior knowledge, in
applications we need to make additional assumptions which allow us to work
in a small neighbourhood and make use of local asymptotic equivalence. In
particular, the convergence holds for the restricted models where the
Sobolev condition is imposed on top of the local one. This will be used in
establishing the estimation, testing, and functional estimation results.



\begin{theorem}
\label{th.qlan}
Let $\mathcal{Q}_n(\psi_0,\gamma_n) $ and $\mathcal{G}_n(\psi_0, \gamma_n) $
be the models defined in \eqref{eq.e.n} and respectively \eqref{eq.g.n}
where  $\gamma_n=o(1)$.
Then the following convergence holds uniformly over states $|\psi_0\rangle$:
\begin{eqnarray}
&& \underset{n\to\infty}{\lim\sup}\sup_{|\psi_0\rangle\in \mathcal{H}}
\Delta(\mathcal{Q}_n (\psi_0, \gamma_n) , \mathcal{G}_n (\psi_0, \gamma_n) ) =0  \label{eq.LAN1}
\end{eqnarray}
where $\Delta(\cdot, \cdot)$ is the quantum Le Cam distance defined in
equation \eqref{eq.q.LeCam}.
\end{theorem}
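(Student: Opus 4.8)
The plan is to establish the two one-sided deficiency bounds by exhibiting, in each direction, an explicit quantum channel and controlling its trace-norm error uniformly in $|\psi_0\rangle$ and in $|u\rangle$ with $\|u\|\le\gamma_n$. Since both models consist of pure states, the pure-state formula \eqref{eq.trace.norm.pure} reduces everything to making the relevant \emph{overlaps} close to one: if $V_n$ is a suitable isometry with $|\langle\psi_u^{\otimes n}|V_n|G(\sqrt n u)\rangle|\to1$ uniformly, then $V_n$ carries one family of states onto the other up to a vanishing trace-norm error. The construction exploits the decomposition $\mathcal{H}=\mathbb{C}|\psi_0\rangle\oplus\mathcal{H}_0$ together with the permutation symmetry of the i.i.d. state. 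Writing $|\psi_u\rangle=\sqrt{1-\|u\|^2}\,|\psi_0\rangle+|u\rangle$ and expanding $|\psi_u\rangle^{\otimes n}$ according to the number $k$ of tensor factors lying in $\mathcal{H}_0$, one sees that the state lives in the symmetric subspace $\mathcal{H}^{\otimes_s n}$ and splits into ``excitation sectors''; the $k$-excitation sector (with $n-k$ factors frozen at $|\psi_0\rangle$) is canonically isometric to $\mathcal{H}_0^{\otimes_s k}$, i.e. to the $k$-th level of $\mathcal{F}(\mathcal{H}_0)$. I would take $V_n$ to be the direct sum of these canonical isometries over $0\le k\le n$, mapping the truncated Fock space $\bigoplus_{k=0}^n\mathcal{H}_0^{\otimes_s k}$ into $\mathcal{H}^{\otimes_s n}$.

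The decisive structural point is that at every level the ``direction'' of both vectors is the same symmetric power $|u/\|u\|\rangle^{\otimes k}$, so that under $V_n$ the states $|\psi_u^{\otimes n}\rangle$ and $|G(\sqrt n u)\rangle$ differ \emph{only} through their level amplitudes: these are the square roots of a $\mathrm{Binomial}(n,\|u\|^2)$ law in the i.i.d. model and of a $\mathrm{Poisson}(n\|u\|^2)$ law in the Gaussian model. Consequently, up to the Fock tail above level $n$, the overlap $\langle\psi_u^{\otimes n}|V_n|G(\sqrt n u)\rangle$ equals the Hellinger affinity $\sum_k\sqrt{B_k P_k}$ between these two laws, which is real and positive, and whose deviation from one is bounded by their total variation distance. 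By Le Cam's inequality this is at most $C\,n\|u\|^4\le C\,n\gamma_n^4$, and here is precisely where the hypothesis $\gamma_n=o(n^{-1/4})$ enters: it forces $n\gamma_n^4\to0$, so the affinity tends to one uniformly in $\|u\|\le\gamma_n$ and in $|\psi_0\rangle$ (the bound depends on $n,\gamma_n$ only). The truncation is harmless, since $\mathbb{P}(\mathrm{Poisson}(n\|u\|^2)>n)$ is super-exponentially small because $n\|u\|^2\le n\gamma_n^2=o(\sqrt n)\ll n$. To obtain genuine channels I would dilate the partial isometries in the standard way, using $T(\rho)=V_n^*\rho V_n+\mathrm{Tr}\big((\mathbf 1-V_nV_n^*)\rho\big)\sigma_0$ for the direction $\mathcal{Q}_n\to\mathcal{G}_n$ and the analogous completion of $\sigma\mapsto V_n P_{\le n}\sigma P_{\le n}V_n^*$ for $\mathcal{G}_n\to\mathcal{Q}_n$; the ``junk'' terms act on orthogonal complements and contribute only to the $o(1)$ truncation error. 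Both deficiencies are then bounded by $O\big(\sqrt{n\gamma_n^4}\big)=o(1)$, giving $\Delta\to0$.

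I expect the main obstacle to be the structural step: identifying the excitation-number sectors of $\mathcal{H}^{\otimes_s n}$ with the Fock levels of $\mathcal{F}(\mathcal{H}_0)$ and verifying that the within-level directions genuinely coincide, so that the entire comparison collapses to the one-dimensional Binomial-versus-Poisson problem. Once that reduction is secured, the quantitative part is the classical Poisson approximation theorem, and the role of the localisation rate $\gamma_n=o(n^{-1/4})$ is exactly to make that approximation error vanish; the uniformity over $|\psi_0\rangle$ is automatic since none of the error bounds depend on the chosen reference state.
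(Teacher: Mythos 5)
Your proposal is correct, and at the level of the channel constructions it coincides with the paper's proof: the direct map is conjugation by the canonical isometry identifying $\mathcal{H}^{\otimes_s n}$ with the truncated Fock space $\bigoplus_{k=0}^{n}\mathcal{H}_0^{\otimes_s k}\subset\mathcal{F}(\mathcal{H}_0)$ (your $V_n$ is the paper's $V_n$ read in the opposite direction), and your reverse channel --- projection onto levels $\leq n$, transport back, plus a ``junk'' state weighted by $\mathrm{Tr}\left((\mathbf{1}-P_{\leq n})\sigma\right)$ --- is exactly the paper's $S_n$. Where you genuinely differ is in the key convergence step. The paper argues by functoriality of second quantisation: for fixed $|u\rangle$ it embeds the two-dimensional space $\mathbb{C}|\psi_0\rangle\oplus\mathbb{C}|u\rangle$ into $\mathcal{H}$, checks that this embedding intertwines $V_n$ with its qubit analogue, and then quotes the qubit local asymptotic normality result of \cite{GutaKahn2006}. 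You instead complete the estimate by hand: your structural claim is correct (in the $k$-th sector the i.i.d. state has amplitude $\left(\binom{n}{k}\|u\|^{2k}(1-\|u\|^2)^{n-k}\right)^{1/2}$ and the coherent state has amplitude $\left(e^{-n\|u\|^2}(n\|u\|^2)^k/k!\right)^{1/2}$, both along the same unit vector $|u/\|u\|\rangle^{\otimes k}$), so the overlap is exactly the Hellinger affinity between $\mathrm{Bin}(n,\|u\|^2)$ and $\mathrm{Poi}(n\|u\|^2)$, and Le Cam's inequality gives $1-\mathrm{affinity}\leq n\|u\|^4\leq n\gamma_n^4$, whence by \eqref{eq.trace.norm.pure} both deficiencies are $O(n^{1/2}\gamma_n^2)=o(1)$. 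The two routes buy different things: the paper's is shorter and exhibits the structural mechanism by which the nonparametric statement is inherited from the parametric qubit one, while yours is self-contained, produces an explicit rate for the Le Cam distance, makes the role of the localisation $\gamma_n=o(n^{-1/4})$ transparent as precisely the Poisson-approximation threshold, and renders the uniformity in $|\psi_0\rangle$ and in the direction of $u$ immediate, since every bound depends on the problem only through $n\|u\|^2$. In essence, your binomial-versus-Poisson computation is the content of the qubit result the paper cites, unfolded rather than imported.
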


The proof is given in \cite{Butucea&Guta&Nussbaum}.

\section{Applications}

\label{sec.applications} 

In this section we discuss three major applications of the local asymptotic
equivalence result in Theorem \ref{th.qlan}, namely to the estimation of
pure states, estimation of a physically meaningful quadratic functional, and
finally to testing between pure states. We stress that local asymptotic
equivalence allows us to translate these problems into similar but easier
ones involving Gaussian states. This strategy has already been successfully
employed \cite{GutaKahn2006} in finding asymptotically optimal estimation
procedures for \emph{finite dimensional mixed states}, which otherwise
appeared to be a difficult problem due to the complexity of the set of
possible measurements.

As discussed in section \ref{sec.q.models}, we will assume that we are given
$n$ independent systems, each prepared in a state $|\psi\rangle \in \mathcal{%
H}$ belonging to the Sobolev ellipsoid $S^\alpha (L)$ defined in equation %
\eqref{eq:ClassStates}. The corresponding quantum statistical model $%
\mathcal{Q}_n$ was defined in equation \eqref{eq.Q.n.model}, and the
Gaussian counterpart model $\mathcal{G}_n$ was defined in equation %
\eqref{eq.G.n.model}.

Here is a summary of the results. In Theorem \ref{thm:lbsobolev} we show
that the estimation rates over such ellipsoids are $n^{- \alpha/(2 \alpha
+1)}$; this is similar to the well-known rates, e.g. for density estimation,
in nonparametric statistics (see~\cite{Tsyb}). The estimation of the
quadratic functional
\begin{equation*}
F(\psi) = \sum_{j \geq 0} |\psi_j|^2 j^{2 \beta}, \mbox{ for some fixed }
\beta >0
\end{equation*}
of the unknown pure state presents two regimes: a parametric rate $n^{-1}$
for the MSE is attained when the unknown state has enough "smoothness" (that
is $\alpha \ge 2 \beta$), whereas a nonparametric rate $n^{-2(1- \beta /
\alpha) }$ is obtained when $\beta < \alpha < 2\beta$. This double regime is
known in nonparametric estimation for the density model, with different
values for both the rates and the values of the parameters where the
phase-transition occurs, cf \cite{Cai-Low-nonquad}, \cite{klem} and references therein.

Parametric rates and sharp asymptotic constants are obtained for the testing problem of
a pure state against an alternative described by the Sobolev-type ellipsoid
with an $L_2$-ball removed. In the classical density model only
nonparametric rates for testing of order $n^{-2 \alpha/(4 \alpha +1)}$ can
be obtained for the $L_2$ norm. In our quantum i.i.d. model, the parametric rate
$n^{-1/2}$ is shown to be minimax for testing $H_0: \psi = \psi_0$, for some
$\psi_0$ in $S^\alpha (L)$ over the nonparametric set of alternatives:
\begin{equation*}
H_1: \psi \in S^\alpha(L) \mbox{ is such that } \| |\psi\rangle\langle \psi | -|\psi_0 \rangle \langle \psi_0 | \|_1 \geq c n^{-1/2} .
\end{equation*}
The sharp asymptotic constant we obtain for testing is specific for ensembles
of pure states.  As we discuss in the sequel, quantum
testing of states allows us to optimize over the measurements, and thus
to obtain the most distinguishable likelihoods for the underlying unknown
quantum state.

\subsection{Estimation} \label{sec.estimation}
We consider the problem of estimating an unknown pure state belonging to the
Hermite-Sobolev class $S^\alpha(L)$ given an ensemble of $n$ independent,
identically prepared systems. The corresponding sequence of statistical
models $\mathcal{Q}_n$ was defined in equation \eqref{eq.Q.n.model}. We
first describe a specific measurement procedure which provides an estimator
whose risk attains the nonparametric rate $n^{-2\alpha/(2 \alpha+1)}$. We
prove the lower bounds for estimating a Gaussian state in the model $%
\mathcal{G}_n$ defined in \eqref{eq.G.n.model}. Subsequently we use LAE to
establish a lower bound showing that the rate is optimal in the i.i.d. model
as well.

Before deriving the bounds we briefly review the definitions of the loss
functions used here and the relations between them, cf. section \ref{sec.metrics}. Recall that the trace norm distance
between states $\rho $ and $\rho^\prime$ is given by $\|\rho -
\rho^\prime\|_1 := \mathrm{Tr}(|\rho - \rho^\prime|)$, and is the quantum
analogue of the norm-one distance between probability densities. The square
of the Bures distance is given by $d_b^2:= 2 (1- \mathrm{Tr}(\sqrt{\sqrt{\rho%
} \rho^\prime \sqrt{\rho}}))$, and is a quantum extension of the Hellinger
distance. These distances satisfy the inequalities \eqref{eq.distances.inequality}.

In the case of pure states (i.e. $\rho= |\psi \rangle \langle\psi |$, and $%
\rho^\prime= |\psi^\prime \rangle \langle\psi^\prime |$) these metrics
become (cf. \eqref{eq.trace.norm.pure} and \eqref{eq.Bures.normone.pure}),
\begin{equation*}
\left\|\rho -\rho^\prime \right\|_1 = 2 \sqrt{1 - |\langle\psi| \psi^\prime
\rangle|^2} , \qquad d_b^2(\rho \, , \,\rho^\prime ) = 2(1 -|\langle\psi|
\psi^\prime \rangle| ).
\end{equation*}
Since vectors are not uniquely defined by the states, the distances cannot
be expressed directly in terms of the length $\|\psi- \psi^\prime\|$.
However if we consider a reference vector $|\psi_0\rangle$ and define the
representative vector $|\psi\rangle$ such that $\langle \psi_0 |\psi\rangle
\geq 0$, then we can write (as in section \ref{sec.QLAN})
\begin{equation*}
|\psi_u\rangle = \sqrt{1 -\|u\|^2} |\psi_0 \rangle + | u\rangle , \quad
|\psi_{u^\prime} \rangle = \sqrt{1 -\|u^\prime\|^2} |\psi_0 \rangle + |
u^\prime\rangle , \quad |u\rangle,|u^\prime\rangle\perp |\psi_0\rangle
\end{equation*}
and the distances have the same (up to a constant) quadratic approximation
\begin{eqnarray}  \label{eq.lcoap.approx.bures.normone}
\| \rho_u - \rho_{u^\prime}\|^2_1
& =& 4\|u- u^\prime\|^2 + O( \mathrm{max}
(\|u\|, \|u^\prime \|)^4 ),\nonumber \\
 d_b^2(\rho_u \, , \,\rho_{u^\prime} )
&=& \|u- u^\prime\|^2 + O( \mathrm{max} (\|u\|, \|u^\prime \|)^4 ),
\end{eqnarray}
where the correction terms are of order 4 as $ \|u\|$ and $\|u'\|$ tend to $0$.
Below we show that asymptotically with $n$ the estimation risk for norm-one
square and Bures distance square will have the same rate as that of
estimating the local parameter $u$ with respect to the Hilbert space
distance. 

\subsubsection{Upper bounds}

\label{sec.upper.bound.estimation}

We first describe a two steps measurement procedure, which provides an
estimator whose risk has rate $n^{-2\alpha/(2\alpha+1)}$.

\begin{theorem}
\label{thm:UBestim} Consider the i.i.d. quantum model $\mathcal{Q}_n$ given
by equation \eqref{eq.Q.n.model}. There exists an estimator
$\widehat \rho_n:= |\widehat\psi_n\rangle\langle\widehat\psi_n | $ such that
\begin{equation*}
\limsup_{n \to \infty} \sup_{|\psi\rangle \in S^\alpha(L)} n^{2\alpha/(2\alpha+1)}
\mathbb{E}_\rho \left[ d^2 (\hat\rho_n , \rho )\right] \leq C ,
\end{equation*}
where $\rho:= |\psi\rangle \langle \psi|$,  $d(\hat\rho_n , \rho )$ denotes either the trace-norm distance, or the Bures distance, and  $C>0$ is a constant depending only on $\alpha > 0$ and $L>0$.
\end{theorem}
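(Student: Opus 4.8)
The plan is to reduce the estimation of $\rho=|\psi\rangle\langle\psi|$ to the estimation of its coordinate vector $(\psi_j)_{j\ge 0}$ in the fixed basis $B$, and then to trade a truncation bias against an estimation variance in the familiar nonparametric manner. Fixing a reference $|\psi_0\rangle$ with $\langle\psi_0|\psi\rangle\ge 0$ and writing $|\psi\rangle=|\psi_u\rangle$ as in \eqref{eq.psi-u}, the quadratic approximations \eqref{eq.lcoap.approx.bures.normone} show that, up to a universal constant and a remainder of order $\max(\|u\|,\|u'\|)^4$, both the squared trace-norm risk and the squared Bures risk coincide with the squared Hilbert-space error $\|\widehat u-u\|^2$ of estimating the local parameter. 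Hence it suffices to produce a measurement and an estimator $|\widehat\psi_n\rangle=|\psi_{\widehat u}\rangle$ with
\[
\sup_{|\psi\rangle\in S^\alpha(L)}\mathbb{E}\,\|\widehat u-u\|^2=O\!\left(n^{-2\alpha/(2\alpha+1)}\right),
\]
the fourth-order remainder being negligible against this rate since $\|u\|,\|\widehat u\|\to 0$.

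I would realise this through a two-step measurement. In the first step a vanishing fraction $n_1=o(n)$, $n_1\to\infty$, of the copies is measured in the Fock basis $B$; the empirical frequencies estimate the magnitudes $|\psi_j|^2$ and thereby supply a consistent preliminary state $|\widehat\psi_0\rangle$, the phase convention $\langle\widehat\psi_0|\psi\rangle\ge 0$, and a truncation level $k_n\asymp n^{1/(2\alpha+1)}$. In the second step the remaining $n_2=n-n_1\sim n$ copies are processed by a single \emph{collective} measurement on the symmetric space $\mathcal{H}^{\otimes_s n_2}$ that jointly estimates all retained coefficients with per-coefficient mean square error $O(1/n)$. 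A convenient implementation embeds the symmetric state into Fock space through the isometry $V_{n_2}$ from the proof of Theorem~\ref{th.qlan} and then performs heterodyne detection: on each retained mode this returns an outcome $z_j$ with $\mathbb{E}|z_j/\sqrt{n_2}-u_j|^2=O(1/n_2)$, so that $\widehat u_j:=z_j/\sqrt{n_2}$ for $j\le k_n$ and $\widehat u_j:=0$ otherwise; an equivalent option is the optimal covariant pure-state measurement on the $k_n$-dimensional truncation, whose infidelity is of order $k_n/n$.

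The risk is then controlled by the bias–variance split
\[
\mathbb{E}\,\|\widehat u-u\|^2=\sum_{j\le k_n}\mathbb{E}\,|\widehat u_j-u_j|^2+\sum_{j>k_n}|u_j|^2\le C\,\frac{k_n}{n}+L\,k_n^{-2\alpha},
\]
where the tail is bounded using the Sobolev constraint $\sum_j|\psi_j|^2 j^{2\alpha}\le L$ (which $|u\rangle$ inherits, up to constants, once $|\widehat\psi_0\rangle$ is close to $|\psi\rangle$). The choice $k_n\asymp n^{1/(2\alpha+1)}$ balances the two terms at the announced rate $n^{-2\alpha/(2\alpha+1)}$, uniformly over $S^\alpha(L)$. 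It then remains to absorb the fourth-order remainder of \eqref{eq.lcoap.approx.bures.normone} and to handle the low-probability event that the preliminary step misbehaves, on which one falls back on the bounded estimator $|\widehat\psi_0\rangle$; since the loss is uniformly bounded and this event has probability decaying faster than any power of $n$, its contribution is asymptotically negligible.

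The hard part is the second step, and it is genuinely quantum. The observables that optimally estimate different coefficients — in particular the relative phases read off against the reference — do not commute, so a measurement tuned to one coefficient disturbs the others. Splitting the $n_2$ copies into one group per retained mode would incur a per-coefficient variance of order $k_n/n$ and hence a total variance of order $k_n^2/n$, yielding only the slower rate $n^{-2\alpha/(2\alpha+2)}$. Beating this incompatibility forces a collective measurement on the $n$-fold symmetric state together with a uniform control of its per-mode variance, and this is exactly where the Fock-space/heterodyne (or covariant) construction does the work and where the main technical effort lies. A secondary difficulty is to certify, uniformly over the non-compact class $S^\alpha(L)$, that the preliminary localisation and the truncation are sharp enough for both the quadratic approximation of the distances and the Gaussian analysis of the heterodyne outcomes to remain valid.
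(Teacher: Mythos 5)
Your high-level skeleton (truncate at dimension $k_n\asymp n^{1/(2\alpha+1)}$, bound the tail by the Sobolev constraint, and use a collective measurement whose total variance is of order $k_n/n$) is the same trade-off that drives the paper's proof, and your passing remark about ``the optimal covariant pure-state measurement on the $k_n$-dimensional truncation'' is in fact exactly the paper's route. However, the main path you actually develop has a genuine gap: your preliminary localisation step cannot work. Measuring $n_1$ copies in the basis $B$ yields outcomes distributed according to $\{|\psi_j|^2\}_j$ only; this measurement is completely blind to the \emph{relative phases} of the coefficients $\psi_j$, so the empirical frequencies cannot supply a ``consistent preliminary state $|\widehat\psi_0\rangle$'' --- any assignment of phases to the estimated magnitudes is equally compatible with the data. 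Without a consistent $|\widehat\psi_0\rangle$, neither of the two reductions you rely on is available: the quadratic approximation \eqref{eq.lcoap.approx.bures.normone} requires $\|u\|,\|\widehat u\|\to 0$, i.e.\ a reference already close to the truth, and the heterodyne step requires the even stronger localisation $\|u\|=o(n^{-1/4})$ for the embedding $V_{n}\psi_u^{\otimes n}\approx G(\sqrt{n}u)$ of Theorem~\ref{th.qlan} to be valid. Worse, this second requirement is unattainable in principle for $\alpha\le 1/2$: even a preliminary estimator achieving the minimax rate leaves a residual local parameter of order $n^{-\alpha/(2\alpha+1)}$, which is $o(n^{-1/4})$ only when $\alpha>1/2$, whereas the theorem is claimed for all $\alpha>0$. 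So the LAE/heterodyne construction cannot be the engine of the upper bound; it is the right tool for the \emph{lower} bounds, where one may restrict to a local neighbourhood, but not here.

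The paper's proof avoids every one of these issues by never introducing a reference state or a local parameter. The truncation is implemented physically, as a two-outcome projective measurement $\{P_n,\mathbf{1}-P_n\}$ applied to each copy separately, where $P_n$ projects onto the span of the first $d_n=[n^{1/(2\alpha+1)}]+1$ basis vectors; the copies with outcome $1$ (a $\mathrm{Bin}(n,p_n)$ number $\tilde n$ of them, with $1-p_n\le Ln^{-2\alpha/(2\alpha+1)}$ by the Sobolev condition) collapse onto the normalised truncated state $|\psi^{(n)}\rangle\propto P_n|\psi\rangle$, and the covariant measurement \eqref{eq.covmeas} applied collectively to these $\tilde n$ copies has, by Hayashi's exact formula, conditional trace-norm-squared risk $4(d_n-1)/(d_n+\tilde n)$ \emph{uniformly over all pure states} of the truncated space --- no localisation, no phases, no Gaussian approximation. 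The triangle inequality then combines this with the bias $d_1^2(\rho,\rho^{(n)})=4(1-p_n)$, giving the rate for every $\alpha>0$. If you replace your first stage by this projective truncation (note that $k_n$ is deterministic, so no data is needed to choose it) and commit to the covariant measurement rather than heterodyne, your argument becomes correct and essentially coincides with the paper's.
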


The proof is given in \cite{Butucea&Guta&Nussbaum}.


\subsubsection{Lower bounds - Unimprovable rates}

We will first consider the Gaussian model $\mathcal{G}_n$ given by equation %
\eqref{eq.G.n.model} which is indexed by Hilbert space vectors $\psi\in
\mathcal{H}$ in the Sobolev class $S^\alpha(L)$, playing the role of means
of quantum Gaussian states $|G(\sqrt{n}\psi )\rangle$. In Theorem \ref%
{thm:lbsobolev} we find a lower bound for the mean square error of any
estimator $\hat{\psi}$. This is then used in conjunction with the local
asymptotic equivalence Theorem \ref{th.qlan} to obtain a lower bound for the
risk of the i.i.d. model $\mathcal{Q}_n$, with respect to the norm-one and
Bures distances.


\begin{theorem}
\label{thm:lbsobolev} Consider the quantum Gaussian model $\mathcal{G}_n$
given by equation \eqref{eq.G.n.model}. There exists some constant $c>0$
depending only on $\alpha$ and $L$ such that
\begin{equation*}
\liminf_{n \to \infty} \inf _{\widehat \psi_n} \sup_{\psi \in S^\alpha(L)}
n^{2\alpha/(2\alpha+1)} \mathbb{E}_\psi \left[ \| \widehat \psi_n - \psi
\|_2^2 \right] \geq c ,
\end{equation*}
where the infimum is taken over all estimators $\widehat \psi_n$, understood
as combination of measurements and classical estimators.
\end{theorem}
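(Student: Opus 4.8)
The plan is to establish the lower bound by a quantum adaptation of Assouad's hypercube method, in which the only genuinely quantum input is the optimal discrimination error \eqref{HHbound}. First I would fix $d_n = \lfloor n^{1/(2\alpha+1)}\rfloor$ and perturb a band of $m = d_n$ Fock coordinates near frequency $d_n$. Concretely, for $\omega = (\omega_{d_n}, \dots, \omega_{2d_n - 1}) \in \{-1,+1\}^{m}$ set
\begin{equation*}
|\psi^{(\omega)}\rangle = \sqrt{1 - m\delta^2}\,|e_0\rangle + \delta \sum_{j=d_n}^{2d_n-1} \omega_j |e_j\rangle, \qquad \delta^2 = \frac{a}{n},
\end{equation*}
with a small constant $a>0$. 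Each $|\psi^{(\omega)}\rangle$ is a unit vector, and the Hermite--Sobolev constraint in \eqref{eq:ClassStates} holds since $\sum_j |\psi_j^{(\omega)}|^2 j^{2\alpha} \le \delta^2 m (2d_n)^{2\alpha} = a\,2^{2\alpha}\, d_n^{2\alpha+1}/n \le a\,2^{2\alpha}$, which is $\le L$ for $a$ small; hence the associated coherent states $\rho^{(\omega)} = |G(\sqrt n \psi^{(\omega)})\rangle\langle G(\sqrt n \psi^{(\omega)})|$ all belong to $\mathcal{G}_n$ in \eqref{eq.G.n.model}. Since the $L_2$ loss decomposes over coordinates and flipping one sign moves $\psi$ by $\|\psi^{(\omega)} - \psi^{(\omega')}\|^2 = 4\delta^2$, an Assouad-type bound will yield a risk of order $m\delta^2 \sim d_n/n = n^{-2\alpha/(2\alpha+1)}$, exactly the target rate.

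Second, I would quantify the per-coordinate confusability. Because coherent states factorise over modes, $\rho^{(\omega)} = \bigotimes_j \sigma^{(j)}_{\omega_j}$, flipping $\omega_j$ alters only the single mode $j$; the remaining factors are identical and contribute nothing to discrimination. Hence the optimal error probability for testing $\omega_j = +1$ against $\omega_j = -1$, with the other coordinates held fixed, equals that for two single-mode coherent states with displacements $\pm\sqrt n\,\delta$, which by \eqref{HHbound} and the pure-state trace-norm formula \eqref{eq.trace.norm.pure} is
\begin{equation*}
\tfrac12\bigl(1 - \sqrt{1 - \exp(-4 n \delta^2)}\bigr) = \tfrac12\bigl(1 - \sqrt{1 - e^{-4a}}\bigr) =: \tfrac12\,\kappa,
\end{equation*}
a strictly positive constant independent of $n$, of $j$, and of the values of the other coordinates.

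Third, I would assemble these into the lower bound. Any estimator on the quantum model is a measurement followed by a classical estimator; given a measurement, each coordinate sign induces a classical test whose error is at least $\kappa/2$ by the decoupling above, uniformly. Averaging over the uniform prior on the hypercube and summing the per-coordinate contributions (the quantum analogue of Assouad's lemma, with the classical testing affinity replaced by \eqref{HHbound}) gives
\begin{equation*}
\inf_{\widehat\psi_n}\ \max_{\omega}\ \mathbb{E}_{\psi^{(\omega)}}\bigl[\|\widehat\psi_n - \psi^{(\omega)}\|_2^2\bigr]\ \geq\ \frac{m\,\delta^2}{2}\,\kappa\ \sim\ \frac{\kappa a}{2}\, n^{-2\alpha/(2\alpha+1)},
\end{equation*}
and since the hypercube lies in $S^\alpha(L)$, the supremum over the full class is at least as large; this is the claim with $c = \kappa a/2$.

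The main obstacle is making this ``decoupling'' rigorous inside a single global measurement: one POVM on the whole Fock space must serve all $m$ coordinate tests simultaneously, whereas \eqref{HHbound} optimises each test in isolation. The product structure $\rho^{(\omega)} = \bigotimes_j \sigma^{(j)}_{\omega_j}$ is precisely what resolves this, since the reduced problem for coordinate $j$ sees only the factor $\sigma^{(j)}_{\omega_j}$; but one still has to phrase Assouad's lemma so that the global measurement is marginalised coordinatewise, i.e. to establish the quantum Assouad inequality bounding the Bayes risk by a sum of two-point quantum testing errors. Verifying this tensorised two-point reduction, rather than the elementary displacement and Sobolev computations, is where the real work lies.
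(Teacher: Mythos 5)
Your proposal follows essentially the same route as the paper's proof: an Assouad-type hypercube of coherent states (the paper uses Pinsker-weighted coordinates $\sigma_j^2=\lambda\bigl(1-(j/N)^{2\alpha}\bigr)$ on modes $1,\dots,N$ rather than your flat band near $d_n$, which is immaterial to the rate), reduction of the Bayes risk to per-coordinate two-point testing, and the Helstrom bound \eqref{HHbound} combined with the tensor factorisation of coherent states. The ``quantum Assouad'' step you defer as the main obstacle is exactly what the paper carries out, and it needs no coordinatewise marginalisation of the measurement: after averaging over the remaining hypercube coordinates, the two per-coordinate hypotheses are mixtures of the form $|G(\pm\sigma_j)\rangle\langle G(\pm\sigma_j)|\otimes\rho$ with a \emph{common} factor $\rho$, so their trace distance equals that of the two single-mode coherent states, and Markov's inequality (turning any estimator into a test) together with \eqref{HHbound} --- which already optimises over all global POVMs --- closes the argument.
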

The proof is given in \cite{Butucea&Guta&Nussbaum}.


We now proceed to consider the i.i.d. model $\mathcal{Q}_n$ defined in %
\eqref{eq.Q.n.model}. We are given $n$ copies of an unknown pure state $%
|\psi\rangle\langle \psi|$, with $\psi$ in the Sobolev class $\mathcal{S}%
^\alpha(L)$. The goal is to find an asymptotic lower bound for the
estimation risk (with respect to the Bures or norm-one loss functions) which
matches the upper bound derived in section \ref{sec.upper.bound.estimation}.
Since both loss functions satisfy the triangle inequality, it can be shown
that by choosing estimators which are mixed states, rather than pure states,
one can improve the risk by at most a constant factor 2. Therefore we
consider estimators which are pure states. In order to fix the phase of the
vector representing the true and the estimated state, we will assume that $%
\langle \psi | e_0\rangle \geq 0$ and $\langle \hat{\psi} | e_0\rangle \geq 0
$. 

\begin{theorem}
\label{thm:lbsobolev.iid} Consider the i.i.d. quantum model $\mathcal{Q}_n$
given by equation \eqref{eq.Q.n.model}. There exists some constant $c>0$
depending only on $\alpha > 0$ and $L>0$ such that
\begin{equation*}
\liminf_{n \to \infty} \inf _{|\widehat \psi_n\rangle}\, \sup_{|\psi\rangle \in S^\alpha(L)}
n^{2\alpha/(2\alpha+1)} \mathbb{E}_\rho \left[ d^2 (\hat\rho_n , \rho )\right]
\geq c ,
\end{equation*}
where $\rho:= |\psi\rangle\langle \psi|$, the infimum is taken over all estimators
$\widehat \rho_n:= |\widehat \psi_n\rangle \langle \widehat \psi_n|$ (defined by a combination of measurement and a classical estimator), and the loss
function $d (\hat\rho , \rho )$ is either the norm-one or the Bures
distance.
\end{theorem}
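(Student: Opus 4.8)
The plan is to transfer the Gaussian lower bound of Theorem~\ref{thm:lbsobolev} to the i.i.d.\ model through the local asymptotic equivalence of Theorem~\ref{th.qlan}. Write $r_n:=n^{-2\alpha/(2\alpha+1)}$ for the target rate. First I would localise: fix the reference state $|\psi_0\rangle=|e_0\rangle$, which lies in $S^\alpha(L)$ since $\sum_j|\psi_j|^2 j^{2\alpha}=0$, and parametrise nearby states by $|\psi_u\rangle=\sqrt{1-\|u\|^2}\,|\psi_0\rangle+|u\rangle$ as in \eqref{eq.psi-u}. Since the minimax risk over $\mathcal{Q}_n$ dominates that over any subfamily, it suffices to bound from below the risk over $\{\,|\psi_u\rangle\langle\psi_u|^{\otimes n}:\|u\|\leq\gamma_n,\ |\psi_u\rangle\langle\psi_u|\in S^\alpha(L)\,\}$, where I would take $\gamma_n:=n^{-\alpha/(2\alpha+1)}=\sqrt{r_n}$. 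The hypothesis $\alpha>1/2$ enters precisely here: it is equivalent to $\gamma_n=o(n^{-1/4})$, so that Theorem~\ref{th.qlan} applies to the local pair $\mathcal{Q}_n(\psi_0,\gamma_n)$, $\mathcal{G}_n(\psi_0,\gamma_n)$ of \eqref{eq.e.n}--\eqref{eq.g.n}. By the local expansions \eqref{eq.lcoap.approx.bures.normone}, the trace-norm and Bures squared losses each agree, up to a universal constant, with $\|u-u'\|^2$ on this neighbourhood, so it is enough to control the local parameter.

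Second, I would observe that the bound of Theorem~\ref{thm:lbsobolev} is itself produced at the scale $\gamma_n$. Its proof perturbs the coefficients of the mean near the index $d_n\sim n^{1/(2\alpha+1)}$, each by an amount of order $n^{-1/2}$, yielding competing means with $\ell_2$-separation of order $\sqrt{d_n}\,n^{-1/2}=n^{-\alpha/(2\alpha+1)}=\gamma_n$ and Sobolev norm below $L$ after fixing constants. Hence that family lies inside $\mathcal{G}_n(\psi_0,\gamma_n)\cap S^\alpha(L)$ and supplies a lower bound $\geq c\,r_n$ for the \emph{local} Gaussian minimax risk.

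The main obstacle is a mismatch of scales: Theorem~\ref{th.qlan} controls the Le Cam distance only at order one in trace norm, whereas the target risk $r_n$ tends to zero, so a transfer through a loss bounded by a constant would cost $o(1)$ and be useless. I would resolve this by truncating at the right scale, working with $\ell_n:=\min(d^2,\,M r_n)$ for a large fixed $M$. On the Gaussian side the local minimax risk for $\ell_n$ is still $\geq c\,r_n$, because the dominant error events in the construction above carry loss of order $r_n\ll M r_n$ and are untouched by the truncation. On the i.i.d.\ side, since $\ell_n\leq M r_n$, the channel contraction underlying Lemma~\ref{lemma.distance.channels} shows that any i.i.d.\ estimator, composed with the reverse channel of Theorem~\ref{th.qlan}, becomes a Gaussian procedure whose $\ell_n$-risk differs from the original i.i.d.\ $\ell_n$-risk by at most
\[
M r_n\cdot\delta\big(\mathcal{G}_n(\psi_0,\gamma_n),\mathcal{Q}_n(\psi_0,\gamma_n)\big)=M r_n\cdot o(1)=o(r_n);
\]
as the former is $\geq c\,r_n$, the local i.i.d.\ minimax risk for $\ell_n$ is $\geq c\,r_n-o(r_n)\geq\tfrac{c}{2}\,r_n$ for $n$ large.

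Finally, since $d^2\geq\ell_n$ pointwise, the same lower bound holds for the untruncated trace-norm and Bures losses, first over the local family and hence over all of $\mathcal{Q}_n$; multiplying by $r_n^{-1}=n^{2\alpha/(2\alpha+1)}$ and taking $\liminf$ gives the claim. The phase conventions $\langle\psi|e_0\rangle\geq0$ and $\langle\widehat\psi_n|e_0\rangle\geq0$ are compatible with the parametrisation \eqref{eq.psi-u} about $\psi_0=|e_0\rangle$, so phases need no separate treatment. The two points I would verify carefully are that this truncated, localised form of the Gaussian bound can indeed be read off from the proof of Theorem~\ref{thm:lbsobolev}, and that the uniformity over $|\psi_0\rangle$ in Theorem~\ref{th.qlan} amply covers the single reference state used here.
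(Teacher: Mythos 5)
Your proposal is correct, and it reaches the result by a genuinely different route than the paper at the crucial step. The paper also localises at $|e_0\rangle$, uses the quadratic approximation \eqref{eq.lcoap.approx.bures.normone}, and re-uses the hypercube of Theorem \ref{thm:lbsobolev}; but instead of transferring the risk statement itself, it re-runs the Assouad/two-point argument \emph{inside the i.i.d.\ model} and applies LAE only to the order-one quantities appearing there: by Lemma \ref{lemma.distance.channels}, the averaged i.i.d.\ states satisfy $\|\tau_j^+-\tau_j^-\|_1\le\|\rho_j^+-\rho_j^-\|_1+2\delta_n$ with $\delta_n=o(1)$, so each per-coordinate testing error is at least $\exp(-2\sigma_j^2)-\delta_n$, a quantity bounded below by a constant; only afterwards is this multiplied by the small scale $\sigma_j^2/(2n)$ and summed over $j\le N$ to give $cN/n\asymp r_n$ (writing $r_n=n^{-2\alpha/(2\alpha+1)}$ as in your proposal). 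That is the paper's answer to the scale mismatch you correctly identified: the $o(1)$ Le Cam error is absorbed additively into a constant-order probability and is never compared against $r_n$. Your mechanism --- truncating the loss at $Mr_n$ and pushing the entire minimax bound through the reverse channel at cost $Mr_n\cdot o(1)=o(r_n)$ --- is equally valid and more modular, but it relies on the point you flagged: a truncated, localised version of Theorem \ref{thm:lbsobolev}. That version does hold and can be read off from the hypercube proof; the cleanest way is the elementary inequality
\[
\min\Bigl(\textstyle\sum_{j=1}^N x_j,\,Mr_n\Bigr)\ \ge\ \sum_{j=1}^N\min\bigl(x_j,\,Mr_n/N\bigr),
\]
combined with the observation that $Mr_n/N\asymp Mn^{-1}\ge\sigma_j^2/n$ for $M$ large, after which the per-coordinate Markov/testing step of that proof goes through verbatim for the truncated coordinate losses. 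With this supplement, plus the routine reduction (a triangle-inequality argument costing only a constant factor, spelled out in the paper) that lets you assume the estimator itself lies in the local neighbourhood so that \eqref{eq.lcoap.approx.bures.normone} applies to both arguments of the loss, your proof closes. Note finally that both arguments use $\alpha>1/2$ for essentially the same reason seen from two angles: in the paper it guarantees that the hypercube (with $\|u\|^2\asymp r_n$) fits inside the local ball of radius $\gamma_n=(n\log n)^{-1/4}$, while in yours it makes $\gamma_n=\sqrt{r_n}$ admissible for Theorem \ref{th.qlan}.
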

The proof is given in \cite{Butucea&Guta&Nussbaum}.


\subsection{Quadratic functionals}

\label{sec.quadratic.functionals} 

This section deals with the estimation of the quadratic functional
\begin{equation*}
F(\psi) = \sum_{j \geq 0} |\psi_j|^2 \cdot j^{2 \beta}, \mbox{ for some
fixed } 0 < \beta < \alpha,
\end{equation*}
which is well defined for all pure states $|\psi\rangle $ in the ellipsoid $%
S^\alpha(L)$. If the Hilbert space $\mathcal{H}$ is represented as $L_2(
\mathbb{R})$ and $\{| j\rangle : j\geq 0\}$ is the Fock basis (cf. section %
\ref{sec.one.mode}) then $F(\psi)$ is the moment of order $2 \beta$ of the
number operator $N$:
\begin{equation*}
F(\psi) = \mathrm{Tr}(|\psi\rangle \langle \psi| \cdot N^{2
\beta}). 
\end{equation*}
Below we derive upper and lower bounds for the rate of the quadratic risk
for estimating $F(\psi)$, which is of order $n^{-1}$ if $\alpha \geq 2 \beta$,
and $n^{-2(1- \beta /\alpha) }$ if $\beta < \alpha <2 \beta$.


\subsubsection{Upper bounds}


Let us describe an estimator $\widehat F_n$ of $F(\psi)$ in the quantum
i.i.d. model. We consider the measurement of the number operator with
projections $\{|j\rangle \langle j|\}_{j \geq 0}$. For a pure state $%
|\psi\rangle = \sum_{j \geq 0} \psi_j |j\rangle $, we obtain an outcome $X$
taking values $j \in \mathbb{N}$ with probabilities $p_j:= \mathbb{P}_\psi
(X = j) = |\psi_j|^2, $ for $j \geq 0$. By measuring each quantum sample $%
|\psi\rangle$ separately, we obtain i.i.d. copies $X_1,\dots , X_n$ of $X$,
allowing us to estimate each $p_j$ empirically, by
\begin{equation*}
\hat p_j = \frac 1n \sum_{k=1}^n I(X_k=j), \quad j \geq 0.
\end{equation*}
which is an unbiased estimator of $p_j$ with variance $p_j(1-p_j)/n$. The
estimator of the quadratic functional is defined as
\begin{equation}  \label{Festim}
\widehat F_n = \sum_{j=1}^N \hat p_j \cdot j^{2 \beta}
\end{equation}
for an appropriately chosen truncation parameter $N$ defined below. The next
theorem, shows that a parametric rate can be attained for estimating the
quadratic functional $F(\psi)$ if $\alpha \geq 2 \beta$, whereas a
nonparametric rate is attained if $\beta < \alpha <2\beta$. 

\begin{theorem}
\label{thm:Festim} Consider the i.i.d. quantum model $\mathcal{Q}_n$ given
by equation \eqref{eq.Q.n.model}.
Let $\widehat F_n$ be the estimator \eqref{Festim} of $F(\psi)$ with $N \asymp
n^{1/4(\alpha-\beta)} $, for $\alpha \geq 2 \beta$, respectively $N \asymp
n^{1/2 \alpha }$, for $\beta < \alpha < 2 \beta$. Then
\begin{eqnarray}  \label{rate:eta}
\sup_{\psi \in S^\alpha(L)} \mathbb{E}_\psi \left( \hat F_n -
F(\psi)\right)^2
&=& O \left(\eta_n^2 \right) \nonumber \\
\mbox{ where } \eta_n^2
&=& \left\{
\begin{array}{ll}
n^{-1}, & \mbox{ if } \alpha \geq 2 \beta \\
n^{-2(1- \beta /\alpha)}, & \mbox{ if } \beta < \alpha <2 \beta.%
\end{array}
\right.
\end{eqnarray}
\end{theorem}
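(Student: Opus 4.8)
The plan is to bound the mean squared error by a bias--variance decomposition, exploiting the fact that $\widehat F_n$ is an average of i.i.d.\ random variables. Writing $p_j=|\psi_j|^2$, the measurement of the number operator produces i.i.d.\ outcomes $X_1,\dots,X_n$ with $\mathbb{P}_\psi(X_k=j)=p_j$, and
\begin{equation*}
\widehat F_n = \sum_{j=1}^N \hat p_j\, j^{2\beta} = \frac1n \sum_{k=1}^n g(X_k), \qquad g(x):= x^{2\beta}\, I(1\le x\le N).
\end{equation*}
Since the $g(X_k)$ are i.i.d., the variance computation reduces to a single-observation calculation, bypassing the multinomial covariance structure of the counts $\hat p_j$. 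Because $0^{2\beta}=0$, the $j=0$ term contributes to neither $\widehat F_n$ nor $F(\psi)$, so summing from $j=1$ costs nothing. I would then write $\mathbb{E}_\psi(\widehat F_n-F(\psi))^2 = \mathrm{Bias}^2 + \mathrm{Var}(\widehat F_n)$ and estimate the two terms separately, keeping all bounds uniform over $S^\alpha(L)$.

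First I would treat the bias. By unbiasedness of each $\hat p_j$,
\begin{equation*}
\mathbb{E}_\psi \widehat F_n - F(\psi) = -\sum_{j>N} p_j\, j^{2\beta},
\end{equation*}
and since $\beta<\alpha$ we have $j^{2\beta-2\alpha}\le N^{2\beta-2\alpha}$ for $j>N$; hence the Sobolev constraint $\sum_j p_j j^{2\alpha}\le L$ yields $|\mathbb{E}_\psi \widehat F_n - F(\psi)|\le L\,N^{-2(\alpha-\beta)}$, so the squared bias is of order $N^{-4(\alpha-\beta)}$. Next I would use $\mathrm{Var}(\widehat F_n)\le \frac1n \mathbb{E}_\psi[g(X_1)^2] = \frac1n \sum_{j=1}^N p_j\, j^{4\beta}$ and split off the weight $j^{2\alpha}$. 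In the regime $\alpha\ge 2\beta$ one has $4\beta-2\alpha\le 0$, so $j^{4\beta}\le j^{2\alpha}$ and $\sum_{j=1}^N p_j j^{4\beta}\le L$, giving variance of order $n^{-1}$; in the regime $\beta<\alpha<2\beta$ one has $4\beta-2\alpha>0$, so $\sum_{j=1}^N p_j j^{4\beta}\le N^{4\beta-2\alpha} L$, giving variance of order $n^{-1}N^{4\beta-2\alpha}$.

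Finally I would balance the two contributions through the prescribed choice of $N$. For $\alpha\ge 2\beta$ the variance is already parametric, so it only remains to kill the bias: taking $N\asymp n^{1/(4(\alpha-\beta))}$ makes the squared bias of order $n^{-1}$ as well, yielding $\eta_n^2=n^{-1}$. For $\beta<\alpha<2\beta$ both terms genuinely contribute, and the choice $N\asymp n^{1/(2\alpha)}$ equates the squared bias, of order $n^{-2(\alpha-\beta)/\alpha}$, with the variance, of order $n^{-1+(4\beta-2\alpha)/(2\alpha)}$, both equal to $n^{-2(1-\beta/\alpha)}$. Every estimate above uses only the uniform bound $\sum_j p_j j^{2\alpha}\le L$, so the supremum over $\psi\in S^\alpha(L)$ is controlled throughout, which gives the claim. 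The argument is essentially routine; the only point demanding care is the exponent arithmetic in the second regime, ensuring that the bias and variance terms land on the same rate so that the balanced choice of $N$ is indeed optimal.
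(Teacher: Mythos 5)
Your proposal is correct and follows essentially the same route as the paper: the same bias--variance decomposition, the identical bias bound $L N^{-2(\alpha-\beta)}$ from the Sobolev constraint, the same variance bound $\frac{1}{n}\sum_{j=1}^N p_j j^{4\beta}$, the same case split at $\alpha = 2\beta$, and the same choices of $N$. The only cosmetic difference is that you obtain the variance bound by viewing $\widehat F_n$ as a sample mean of the i.i.d.\ variables $g(X_k)$ and using $\mathrm{Var}(g(X_1)) \leq \mathbb{E}[g(X_1)^2]$, whereas the paper passes through the multinomial covariance matrix of the counts $\hat p_j$ and drops the negative rank-one term $-p\,p^\top$; both yield the identical bound.
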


The proof is given in \cite{Butucea&Guta&Nussbaum}.


\subsubsection{Lower bounds}

The next Theorem proves the optimality of the previously attained rate for the estimation of quadratic functionals.

\begin{theorem}
\label{thm:Flowbounds} Consider the i.i.d. quantum model $\mathcal{Q}_n$
given by equation \eqref{eq.Q.n.model}. Then, there exists some constant $c>0
$ depending only on $\alpha$, $\beta$ (with $\alpha> \beta >0$), and $L>0$ such that
\begin{equation*}
\liminf_{n \to \infty} \inf_{\widehat F_n} \sup_{\psi \in S^\alpha(L)}
\eta_n^{-2} \cdot \mathbb{E}_\psi \left( \widehat F_n - F(\psi) \right)^2 \geq
c,
\end{equation*}
where the infimum is taken over all measurements and resulting estimators $%
\widehat F_n$ of $F(\psi)$.
\end{theorem}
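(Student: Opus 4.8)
The plan is to establish the lower bound by the two-point (Le Cam) method, transported from the measured classical experiment to the quantum model through the contraction of total variation under measurements. For any two candidate states $|\psi_0\rangle,|\psi_1\rangle\in S^\alpha(L)$ and any measurement $M$ on $\mathcal{H}^{\otimes n}$, the induced laws $P_0^M,P_1^M$ satisfy $\|P_0^M-P_1^M\|_{\mathrm{TV}}\le\tfrac12\|\rho_0^{\otimes n}-\rho_1^{\otimes n}\|_1$, and by \eqref{eq.trace.norm.pure} applied to the $n$-fold tensor product,
\begin{equation*}
\tfrac12\bigl\|\rho_0^{\otimes n}-\rho_1^{\otimes n}\bigr\|_1=\sqrt{1-|\langle\psi_0|\psi_1\rangle|^{2n}}.
\end{equation*}
Inserting this into the classical two-point bound $\inf_{\widehat F}\max_{i}\mathbb{E}_{\psi_i}(\widehat F-F(\psi_i))^2\ge\tfrac14(F(\psi_1)-F(\psi_0))^2\,(1-\|P_0^M-P_1^M\|_{\mathrm{TV}})$ and taking the infimum over $M$, I would obtain, as soon as the fidelity $|\langle\psi_0|\psi_1\rangle|^{2n}$ is bounded away from $0$,
\begin{equation*}
\inf_{\widehat F_n}\sup_{\psi\in S^\alpha(L)}\mathbb{E}_\psi\bigl(\widehat F_n-F(\psi)\bigr)^2\ge c_0\,\bigl(F(\psi_1)-F(\psi_0)\bigr)^2 ,
\end{equation*}
since $\psi_0,\psi_1\in S^\alpha(L)$. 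Everything then reduces to exhibiting, in each regime of \eqref{rate:eta}, a pair whose $n$-fold fidelity stays bounded below while the functional gap satisfies $(F(\psi_1)-F(\psi_0))^2\asymp\eta_n^2$.

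For the nonparametric regime $\beta<\alpha<2\beta$ I would use a single high-index perturbation, $|\psi_0\rangle=|e_0\rangle$ and $|\psi_1\rangle=\sqrt{1-t^2}\,|e_0\rangle+t\,|e_m\rangle$ with $t\asymp n^{-1/2}$ and $m\asymp n^{1/(2\alpha)}$. Since $N^{2\beta}|e_0\rangle=0$ the first-order term vanishes and the separation is of second order, $F(\psi_1)-F(\psi_0)=t^2m^{2\beta}\asymp n^{-(1-\beta/\alpha)}$; the Sobolev budget $t^2m^{2\alpha}\le L$ fixes the admissible scale of $m$, and $|\langle\psi_0|\psi_1\rangle|^{2n}=(1-t^2)^n\to e^{-c^2}$. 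For the parametric regime $\alpha\ge2\beta$ a single-coordinate second-order perturbation is too weak (it only yields $n^{-2(1-\beta/\alpha)}\le n^{-1}$), so I would instead fix a base state $|\psi_0\rangle=a|e_0\rangle+b|e_1\rangle$ with $0<b^2<L$ and perturb to first order along the orthogonal part of $N^{2\beta}|\psi_0\rangle$, i.e. $|\psi_1\rangle=\cos(t)|\psi_0\rangle+\sin(t)|h\rangle$ with $|h\rangle=-b|e_0\rangle+a|e_1\rangle\perp|\psi_0\rangle$ and $t\asymp n^{-1/2}$. Here the linear term survives, $F(\psi_1)-F(\psi_0)=2t\,\mathrm{Re}\langle\psi_0|N^{2\beta}|h\rangle+O(t^2)=2tab+O(t^2)\asymp n^{-1/2}$, while $|\langle\psi_0|\psi_1\rangle|^{2n}=\cos^{2n}(t)\to e^{-c^2}$; the inequality $\alpha\ge2\beta$ is exactly what keeps $\langle\psi_0|N^{4\beta}|\psi_0\rangle$ of order one inside $S^\alpha(L)$, so the perturbed state remains admissible for large $n$.

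In both regimes one verifies $(F(\psi_1)-F(\psi_0))^2\asymp\eta_n^2$ and $1-\|P_0^M-P_1^M\|_{\mathrm{TV}}\ge 1-\sqrt{1-e^{-c^2}}>0$ uniformly in $M$, which yields $\eta_n^{-2}\,\mathbb{E}_\psi(\widehat F_n-F(\psi))^2\ge c$ and proves the theorem. I expect the main difficulty to be conceptual rather than computational: one must justify that a two-point construction is already rate-optimal and resist importing the classical many-point (chi-square/fuzzy) lower bound for quadratic functionals, which here would produce the strictly larger value $n^{-8(\alpha-\beta)/(4\alpha+1)}$. That value is only a valid lower bound for a heterodyne-type measurement; it is not a valid quantum lower bound, precisely because the number measurement underlying the upper bound in Theorem~\ref{thm:Festim} turns $F(\psi)=\mathrm{Tr}(\rho N^{2\beta})$ into a \emph{linear} moment functional and renders the alternatives distinguishable only up to the quantum fidelity --- which is exactly what the trace-distance reduction in the first paragraph encodes. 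The two places where genuine care is needed are checking that the fidelity-controlled separation matches \eqref{rate:eta} in each regime, and that the first-order construction in the parametric case stays within the Sobolev class.
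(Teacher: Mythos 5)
Your proposal is correct, but it takes a genuinely different route from the paper. The paper first reduces the risk to a testing problem via Markov's inequality, localizes around $|e_0\rangle$, invokes the LAE Theorem~\ref{th.qlan} to pass to the quantum Gaussian model, and then uses an Ermakov-type ``fuzzy'' Gaussian prior over $N\asymp n^{1/2\alpha}$ modes: averaging the coherent states over this prior produces thermal states that commute with the vacuum, reducing the problem to a classical test between a point mass at $0$ and a product of geometric laws, whose error is bounded below by $\exp(-n\sum_j\sigma_j^2)\geq\exp(-c)$. (For the parametric regime $\alpha\geq 2\beta$ the paper simply cites the Cram\'er--Rao bound.) You instead run a two-point Le Cam argument directly on the i.i.d.\ model: the Helstrom bound \eqref{HHbound} gives $\|P_0^M-P_1^M\|_{\mathrm{TV}}\leq\tfrac12\|\rho_0^{\otimes n}-\rho_1^{\otimes n}\|_1=\sqrt{1-|\langle\psi_0|\psi_1\rangle|^{2n}}$ uniformly over POVMs (collective ones included), and your explicit pairs --- the high-index perturbation $t\,|e_m\rangle$ with $t\asymp n^{-1/2}$, $m\asymp n^{1/2\alpha}$ in the regime $\beta<\alpha<2\beta$, and the first-order rotation in the span of $|e_0\rangle,|e_1\rangle$ in the regime $\alpha\geq 2\beta$ --- do achieve gap $\asymp\eta_n$ while keeping the $n$-fold fidelity bounded away from zero and staying inside $S^\alpha(L)$. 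Your approach is more elementary and self-contained: it needs neither LAE nor any prior construction, and it makes the parametric case rigorous where the paper is terse. What the paper's method buys in exchange is structural: the fuzzy-prior argument establishes hardness against the full composite alternative \eqref{eq.test-for-quad} (reused in the Discussion on separation rates), and it is the same machinery that yields the sharp testing asymptotics of Theorem~\ref{thm:Test-2}. Two cosmetic points: your constant $\tfrac14$ in the two-point bound should be $\tfrac18$ for the maximum risk under the standard Markov-plus-disjointness derivation (immaterial for the result), and your remark that $\alpha\geq 2\beta$ is ``exactly what keeps'' the perturbed state admissible is a red herring --- your parametric construction lives in the span of $|e_0\rangle,|e_1\rangle$, so admissibility is automatic for any $\alpha>\beta$; the condition $\alpha\geq2\beta$ only determines which rate $\eta_n$ you must match, not the validity of the construction. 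Your closing observation that the classical fuzzy rate $n^{-8(\alpha-\beta)/(4\alpha+1)}$ cannot be a valid quantum lower bound (it would contradict the upper bound of Theorem~\ref{thm:Festim}, since the number measurement linearizes the functional) is accurate and consistent with the paper's own discussion of the tail-functional reduction.
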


 Further discussion on quadratic functionals can be found in Appendix A.2  \cite{Butucea&Guta&Nussbaum}; proofs are presented in Appendix B.


\subsection{Testing}

\label{sec.testing} 

In the problem of testing for signal in classical Gaussian white noise, over a
smoothness class with an $L_{2}$-ball removed, minimax rates of convergences
(separation rates) are well known \cite{IngsterSuslina}; they are expressed in
the rate of the ball radius tending to zero along with noise intensity, such
that a nontrivial asymptotic power is possible. We will consider an analogous
testing problem here for pure states. Accordingly, let $\rho=|\psi
\rangle\langle\psi|$ denote pure states, let $\rho_{0}=|\psi_{0}\rangle
\langle\psi_{0}|$ be a fixed pure state to serve as the null hypothesis, and
let
\begin{equation}%
B\left(  \varphi\right)  =\left\{  \Vert\rho-\rho_{0}\Vert_{1}\geq
\varphi\right\}
\label{distance-cond}%
\end{equation}
be the complement of a trace norm ball around $\rho_{0}$. We want to test in
the i.i.d. quantum model $\mathcal{Q}_{n}$ given by equation
\eqref{eq.Q.n.model} the following hypotheses about a pure state $\rho$ :
\begin{equation}%
\begin{array}
[c]{ll}%
H_{0}: & \rho=\rho_{0}\\
H_{1}(\varphi_{n}): & \rho\in S^{\alpha}\left(  L\right)  \cap B\left(
\varphi_{n}\right)  \text{ }%
\end{array}
\label{test-a}%
\end{equation}
for $\{\varphi_{n}\}_{n\geq1}$ a decreasing sequence of positive real numbers.
Consider a binary POVM $M=(M_{0},M_{1})$, acting on the
product states $\rho^{\otimes n}$, cf. Definition  \ref{def.POVM}. We denote
the testing risk between two fixed hypotheses by the sum of the two error
probabilities
\[
R_{n}^{T}(M)=R_{n}^{T}(\rho_{0}^{\otimes n},\rho^{\otimes n},M)=\mathrm{Tr}%
(\rho_{0}^{\otimes n}\cdot M_{1})+\mathrm{Tr}(\rho^{\otimes n}\cdot M_{0}).
\]
In the minimax $\alpha$-testing approach which dominates the literature on the
classical Gaussian white noise case, one would require $\mathrm{Tr}(\rho
_{0}^{\otimes n}\cdot M_{1})\leq\alpha$ while trying to minimize the worst
case type 2 error $\sup_{\rho\in S^{\alpha}\left(  L\right)  \cap B\left(
\varphi_{n}\right)  }\mathrm{Tr}(\rho^{\otimes n}\cdot M_{0})$. However we
will consider here the so-called detection problem \cite{Ingst-Stepan} where
the target is the worst case total error probability
\begin{eqnarray*}
\mathbb{P}_{e}^{M}\left(  \varphi_{n}\right)
&=&
\sup_{\rho\in S^{\alpha}\left(
L\right)  \cap B\left(  \varphi_{n}\right)  }R_{n}^{T}(\rho_{0}^{\otimes
n},\rho^{\otimes n},M) \\
&=&
\mathrm{Tr}(\rho_{0}^{\otimes n}\cdot M_{1})+\sup
_{\rho\in S^{\alpha}\left(  L\right)  \cap B\left(  \varphi_{n}\right)
}\mathrm{Tr}(\rho^{\otimes n}\cdot M_{0}).
\end{eqnarray*}
The minimax total error probability is then obtained by optimizing over $T:$
\[
\mathbb{P}_{e}^{\ast}\left(  \varphi_{n}\right)  =\inf_{M \text{ binary POVM}%
}\mathbb{P}_{e}^{M}\left(  \varphi_{n}\right)  .
\]

\subsubsection{Separation rate}

A sequence $\{\varphi_{n}^{\ast}\}_{n\geq1}$ is called a \textit{minimax
separation rate} if any other sequence $\{\varphi_{n}\}_{n\geq1}$ fulfills
\begin{equation}
\mathbb{P}_{e}^{\ast}\left(  \varphi_{n}\right)  \rightarrow1\text{ if
}\varphi_{n}/\varphi_{n}^{\ast}\rightarrow0\text{ and }\mathbb{P}_{e}^{\ast
}\left(  \varphi_{n}\right)  \rightarrow0\text{ if }\varphi_{n}/\varphi
_{n}^{\ast}\rightarrow\infty. \label{sep-rate-def}%
\end{equation}
Below we establish that $\varphi_{n}^{\ast}=n^{-1/2}$ is a separation rate in
the current problem, even though the alternative $H_{1}(\cdot)$ in
(\ref{test-a}) is a nonparametric set of pure states. Recall relations
(\ref{HHbound}), (\ref{eq.trace.norm.pure}) describing the total optimal error
for testing between simple hypotheses given by two pure states.

\begin{theorem}
\label{thm:Test-1} Consider the i.i.d. quantum model $\mathcal{Q}_{n}$ given
by equation \eqref{eq.Q.n.model}, and the testing problem (\ref{test-a}).
Assume that $\rho_{0}$ is in the interior of $S^{\alpha}\left(  L\right)  $,
i.e $\rho_{0}\in S^{\alpha}\left(  L^{\prime}\right)  $ for some $L^{\prime
}<L$. Then $\varphi_{n}^{\ast}=n^{-1/2}$ is a \textit{minimax }separation rate.
\end{theorem}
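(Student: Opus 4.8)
The plan is to verify both implications in the separation-rate definition (\ref{sep-rate-def}) with $\varphi_n^\ast=n^{-1/2}$, treating the achievability regime $\varphi_n\sqrt n\to\infty$ (where a test with vanishing error must exist) and the impossibility regime $\varphi_n\sqrt n\to 0$ (where every test fails) separately. In both regimes the decisive quantity is the overlap of the $n$-fold product states: for pure states the trace-norm distance is governed by the overlap through (\ref{eq.trace.norm.pure}), and the optimal simple-versus-simple error is given by the Helstrom formula (\ref{HHbound}). Since $\langle\psi_0^{\otimes n}|\psi^{\otimes n}\rangle=\langle\psi_0|\psi\rangle^{n}$, the overlap of the product states is $|\langle\psi_0|\psi\rangle|^{2n}$, and the whole proof reduces to tracking how this behaves against $n\varphi_n^2$.

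For the achievability direction ($n\varphi_n^2\to\infty$) I would exhibit a single test whose worst-case error tends to zero, namely the rank-one projection test $M_0=\rho_0^{\otimes n}$, $M_1=\mathbf{1}-\rho_0^{\otimes n}$ (note $\rho_0^{\otimes n}$ is the projection onto $|\psi_0^{\otimes n}\rangle$). Its type-I error $\mathrm{Tr}(\rho_0^{\otimes n}M_1)=1-\mathrm{Tr}((\rho_0^{\otimes n})^2)$ vanishes identically, while for any alternative $\rho=|\psi\rangle\langle\psi|\in S^\alpha(L)\cap B(\varphi_n)$ the type-II error is $\mathrm{Tr}(\rho^{\otimes n}M_0)=|\langle\psi_0|\psi\rangle|^{2n}$. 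The ball condition $\|\rho-\rho_0\|_1\geq\varphi_n$ combined with (\ref{eq.trace.norm.pure}) forces $|\langle\psi_0|\psi\rangle|^2\leq 1-\varphi_n^2/4$, so the worst-case type-II error is at most $(1-\varphi_n^2/4)^n\leq \exp(-n\varphi_n^2/4)\to0$. Hence $\mathbb{P}_e^\ast(\varphi_n)\to0$; this direction uses no Sobolev structure at all.

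For the impossibility direction ($n\varphi_n^2\to0$) I would use a two-point reduction. Bounding the supremum over the composite alternative from below by a single $\rho_1\in S^\alpha(L)\cap B(\varphi_n)$ gives $\mathbb{P}_e^\ast(\varphi_n)\geq\inf_M[\mathrm{Tr}(\rho_0^{\otimes n}M_1)+\mathrm{Tr}(\rho_1^{\otimes n}M_0)]=1-\tfrac12\|\rho_0^{\otimes n}-\rho_1^{\otimes n}\|_1$ by (\ref{HHbound}). By (\ref{eq.trace.norm.pure}) this equals $1-\sqrt{1-|\langle\psi_0|\psi_1\rangle|^{2n}}$. Choosing $\rho_1$ on the boundary sphere, so that $|\langle\psi_0|\psi_1\rangle|^2=1-\varphi_n^2/4$, the overlap satisfies $(1-\varphi_n^2/4)^n\to1$ under $n\varphi_n^2\to0$, whence $\mathbb{P}_e^\ast(\varphi_n)\geq 1-\sqrt{1-(1-\varphi_n^2/4)^n}\to1$.

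The one step requiring care — the main obstacle — is that the two-point alternative $\rho_1$ must genuinely lie in the \emph{restricted} class $S^\alpha(L)$, not merely at the prescribed trace distance from $\rho_0$. Here I would invoke the interior hypothesis $\rho_0\in S^\alpha(L')$ with $L'<L$: taking $|\psi_1\rangle=\cos\theta_n\,|\psi_0\rangle+\sin\theta_n\,|v\rangle$ with $|v\rangle\in\mathcal{H}_0$ a fixed unit vector of finite Hermite--Sobolev norm (e.g. a single basis vector) and $\sin\theta_n=\varphi_n/2$, the expansion $\sum_j|\psi_{1,j}|^2 j^{2\alpha}=\cos^2\theta_n\sum_j|\psi_{0,j}|^2 j^{2\alpha}+O(\sin\theta_n)\leq L'+o(1)$ is strictly below $L$ for $n$ large, so $\rho_1\in S^\alpha(L)\cap B(\varphi_n)$ for all large $n$. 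Combining the two regimes establishes that $\varphi_n^\ast=n^{-1/2}$ is the minimax separation rate.
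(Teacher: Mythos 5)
Your proposal is correct and follows essentially the same route as the paper: the projection test $(\rho_0^{\otimes n}, \mathbf{1}-\rho_0^{\otimes n})$ with the overlap bound $(1-\varphi_n^2/4)^n\leq \exp(-n\varphi_n^2/4)$ for achievability, and a two-point Helstrom reduction with the perturbation $\sqrt{1-\varepsilon^2}\,|\psi_0\rangle+\varepsilon|v\rangle$, $\varepsilon=\varphi_n/2$, kept inside $S^\alpha(L)$ via the interior hypothesis $L'<L$, for impossibility. The only cosmetic differences are that the paper controls the Sobolev constraint by the triangle inequality for the seminorm $\|\cdot\|_\alpha$ rather than your direct expansion with a Cauchy--Schwarz bound on the cross term, and your parenthetical choice of $v$ as ``a single basis vector'' should be replaced by a vector of finite $\alpha$-seminorm that is actually orthogonal to $|\psi_0\rangle$ (as in the paper's space $\mathcal{H}_{0,N}$), since a basis vector need not lie in $\mathcal{H}_0$.
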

The proof is given in \cite{Butucea&Guta&Nussbaum}.

\subsubsection{Sharp asymptotics}

Having identified the optimal rate of convergence in the testing
problem, 
we will go a step further and aim
at a sharp asymptotics for the minimax testing error. We will adopt the
approach of \cite{Ermakov}, extended in \cite{IngsterSuslina}, where testing
analogs of the Pinsker-type sharp risk asymptotics in nonparametric estimation
were obtained. The result will be framed as follows: if the radius is chosen
$\varphi_{n}\sim cn^{-1/2}$ for a certain $c>0$, then the minimax testing
error behaves as $\mathbb{P}_{e}^{\ast}\left(  \varphi_{n}\right)  \sim
\exp\left(  -c^{2}/4\right)  $. Thus the sharp asymptotics is expressed as a
type of scaling result: a choice of constant $c$ in the radius implies a
certain minimax error asymptotics depending on $c$.

To outline the problem, consider the upper and lower error bounds obtained in
the proof of the separation rate, i.e. the proof of Theorem \ref{thm:Test-1} in \cite{Butucea&Guta&Nussbaum}. The upper risk bound obtained is
\begin{equation}
\mathbb{P}_{e}^{M_{n}}\left(  \varphi_{n}\right)  \leq\exp\left(  -c_{n}%
^{2}/4\right)  \label{upper-bound-test-2}%
\end{equation}
if $\varphi_{n}=c_{n}n^{-1/2}$, where $M_{n}$ is the sequence of projection
tests \newline
$M_{n}=(\rho_{0}^{\otimes n},I-\rho_{0}^{\otimes n})$.
The corresponding lower risk
bound  is
\[
\inf_{M \text{ binary POVM}}\mathbb{P}_{e}^{M}\left(  \varphi_{n}\right)
\geq1-\sqrt{1-\left(  1-c_{n}^{2}n^{-1}/4\right)  ^{n}}.
\]
If $c_{n}=c$ we can summarize this as
\[
1-\sqrt{1-\exp\left(  -c^{2}/4\right)  }+o\left(  1\right)  \leq\mathbb{P}%
_{e}^{\ast}\left(  \varphi_{n}\right)  \leq\exp\left(  -c^{2}/4\right)  .
\]
Our result will be that the upper bound is sharp and represents the minimax
risk asymptotics.
\begin{theorem}
\label{thm:Test-2} Consider the i.i.d. quantum model $\mathcal{Q}_{n}$ given
by equation \eqref{eq.Q.n.model}, and the testing problem (\ref{test-a}).
Assume that $\rho_{0}\in S^{\alpha}\left(  L^{\prime}\right)  $ for some
$L^{\prime}<L$. At the minimax separation rate for the radius, i.e. for
$\varphi_{n}\asymp n^{-1/2}$ we have
\[
\lim_{n}\;\;n^{-1}\varphi_{n}^{-2}\;\log\mathbb{P}_{e}^{\ast}\left(  \varphi
_{n}\right)  =-1/4.
\]
\end{theorem}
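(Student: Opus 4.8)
The plan is to sandwich the minimax error between two quantities that agree to leading order, establishing $\mathbb{P}_e^*(\varphi_n)=(1-\varphi_n^2/4)^n(1+o(1))$, from which the logarithmic asymptotics is immediate. The upper bound is already available from the proof of Theorem~\ref{thm:Test-1}: the projection test $M_n=(\rho_0^{\otimes n},\mathbf{1}-\rho_0^{\otimes n})$ has vanishing type-one error and type-two error $\sup_\rho|\langle\psi|\psi_0\rangle|^{2n}$, while by the pure-state identity \eqref{eq.trace.norm.pure} the separation constraint $\|\rho-\rho_0\|_1\ge\varphi_n$ is exactly $|\langle\psi|\psi_0\rangle|^2\le 1-\varphi_n^2/4$; hence $\mathbb{P}_e^*(\varphi_n)\le(1-\varphi_n^2/4)^n\le\exp(-n\varphi_n^2/4)$, which is \eqref{upper-bound-test-2}. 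The entire content of the theorem is therefore a matching \emph{lower} bound: the two-point bound $1-\sqrt{1-\exp(-n\varphi_n^2/4)}$ recorded earlier is off by a multiplicative constant, and the task is to produce a lower bound of the sharp order $\exp(-n\varphi_n^2/4)$.

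The device I would use is a phase-averaging (twirling) prior, the i.i.d. counterpart of the coherent-state computation \eqref{eq.twirling} in Section~\ref{sec.one.mode}. Fix a mode $|e_j\rangle\perp|\psi_0\rangle$, set $\delta_n=\varphi_n/2$, and let the prior $\pi$ be the law of $\rho_{u(\theta)}=|\psi_{u(\theta)}\rangle\langle\psi_{u(\theta)}|$ with local parameter $|u(\theta)\rangle=\delta_n e^{i\theta}|e_j\rangle$ and $\theta$ uniform on $[0,2\pi)$, where $|\psi_{u(\theta)}\rangle$ is the representative \eqref{eq.psi-u}. By \eqref{eq.trace.norm.pure} every such state satisfies $\|\rho_{u(\theta)}-\rho_0\|_1=2\|u(\theta)\|=\varphi_n$, so it sits exactly on the boundary of the alternative; and since $\rho_0\in S^\alpha(L')$ with $L'<L$ while $\delta_n\to0$, the rank-one perturbation at a fixed mode leaves $\rho_{u(\theta)}\in S^\alpha(L)$ for all large $n$. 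Thus $\pi$ is supported in $S^\alpha(L)\cap B(\varphi_n)$ and furnishes a legitimate Bayes prior for the alternative.

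Reducing the composite alternative to its mixture $\bar\rho_n=\int_0^{2\pi}\rho_{u(\theta)}^{\otimes n}\,d\theta/2\pi$ and invoking the optimal-error formula \eqref{HHbound} gives
\begin{equation*}
\mathbb{P}_e^*(\varphi_n)\ \ge\ \inf_M\big[\mathrm{Tr}(\rho_0^{\otimes n}M_1)+\mathrm{Tr}(\bar\rho_n M_0)\big]\ =\ 1-\tfrac12\|\rho_0^{\otimes n}-\bar\rho_n\|_1 .
\end{equation*}
The crux is that phase averaging diagonalises the mixture: expanding $|\psi_{u(\theta)}\rangle^{\otimes n}=\sum_{k=0}^n e^{ik\theta}c_k|S_k\rangle$ in terms of the orthonormal symmetric tensors $|S_k\rangle$ carrying exactly $k$ factors $|e_j\rangle$ and $n-k$ factors $|\psi_0\rangle$, with $|S_0\rangle=|\psi_0\rangle^{\otimes n}$ and $c_0=(1-\delta_n^2)^{n/2}$, the integral over $\theta$ kills every cross term and yields $\bar\rho_n=\sum_k|c_k|^2|S_k\rangle\langle S_k|$. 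Since $\rho_0^{\otimes n}=|S_0\rangle\langle S_0|$ is diagonal in the same basis, $\|\rho_0^{\otimes n}-\bar\rho_n\|_1=2(1-|c_0|^2)$ and the Bayes risk equals $|c_0|^2=(1-\varphi_n^2/4)^n$, exactly matching the upper bound. (Alternatively, the same lower bound follows by transporting the problem to the Gaussian model via Theorem~\ref{th.qlan} and averaging the coherent states over a phase, where \eqref{eq.twirling} directly produces a diagonal state; this route costs an extra additive $o(1)$ from Lemma~\ref{lemma.distance.channels} which is harmless for the logarithmic asymptotics.)

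It then remains to convert $\mathbb{P}_e^*(\varphi_n)=(1-\varphi_n^2/4)^n(1+o(1))$ into the stated limit: taking logarithms, $\log\mathbb{P}_e^*(\varphi_n)=n\log(1-\varphi_n^2/4)+o(1)=-\tfrac14 n\varphi_n^2+O(n\varphi_n^4)+o(1)$, and because $\varphi_n\asymp n^{-1/2}$ forces $n\varphi_n^4=O(n^{-1})\to0$ while $n\varphi_n^2$ stays bounded away from $0$, dividing through by $n\varphi_n^2$ gives the sharp constant $-1/4$. I expect the only delicate points to be bookkeeping: checking, via the interior assumption $L'<L$, that both the closest alternative (upper bound) and the entire support of the twirling prior (lower bound) remain inside $S^\alpha(L)$, and confirming that the discrepancy between $(1-\varphi_n^2/4)^n$ and $\exp(-n\varphi_n^2/4)$, of order $O(n\varphi_n^4)$, does not perturb the constant. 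The conceptual heart of the argument is simply that phase averaging diagonalises the Bayes mixture and thereby makes the elementary trace-norm lower bound tight.
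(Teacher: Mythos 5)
Your proposal is correct, and your lower bound takes a genuinely different route from the paper's. The paper transports the restricted testing problem to the Gaussian model via Theorem~\ref{th.qlan}, places a Gaussian prior on $N\sim\log n$ local coordinates (Lemma~\ref{lemma:priortest}), and uses the twirling identity \eqref{eq.twirling} to turn the Bayes mixture into a product of thermal states $\Phi(r)^{\otimes N}$; since these commute with the vacuum, the problem reduces to a classical test of a point mass against a product of geometric laws, yielding the lower bound $\exp\left(-c^{2}(1+\varepsilon)/4\right)+o(1)$ for arbitrary $\varepsilon>0$. You instead stay entirely in the i.i.d.\ model and use a one-dimensional \emph{phase-uniform} prior on the sphere $\|u\|=\varphi_n/2$: the expansion $|\psi_{u(\theta)}\rangle^{\otimes n}=\sum_{k}e^{ik\theta}c_k|S_k\rangle$ in the excitation-number basis shows that phase averaging kills all off-diagonal terms, so the Helstrom bound \eqref{HHbound} can be evaluated exactly, giving $\mathbb{P}_e^*(\varphi_n)\geq (1-\varphi_n^2/4)^n$, which matches the projection-test upper bound with no $\varepsilon$-slack and no $o(1)$ corrections from LAE or prior-concentration. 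Your argument is thus more elementary (no Gaussian approximation, no concentration lemma) and strictly sharper: it identifies the minimax error exactly for all large $n$, not only its logarithmic asymptotics; what the paper's route buys is a demonstration of the LAE machinery and a structural parallel with the thermal-state device used in the lower bound for quadratic functionals (Theorem~\ref{thm:Flowbounds}). Two small repairs: a basis vector $|e_j\rangle$ orthogonal to $|\psi_0\rangle$ need not exist, so you should take, as the paper does in the proof of Theorem~\ref{thm:Test-1}, a unit vector $u\in\mathcal{H}_{0,N}$ (finitely supported in the basis and orthogonal to $\psi_0$), which has finite $\alpha$-seminorm and makes your Sobolev-membership check go through verbatim; and in your parenthetical alternative, phase-averaging a coherent state produces a Poissonian diagonal state, not the Gaussian average of \eqref{eq.twirling} --- the conclusion (diagonality, hence commutation with the vacuum) is still what you need, but the citation is slightly off.
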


 Further discussion on nonparametric testing can be found in Appendix A.3  \cite{Butucea&Guta&Nussbaum}; proofs are presented in Appendix B.


\subsection{Discussion: state estimation}
\
\smallskip
\

\textit{Tomography and optimal rates. } Consider a model where the Sobolev-type assumption $\rho\in S^{\alpha}\left(
L\right)  $ about the pure state $\rho=\left\vert \psi\right\rangle
\left\langle \psi\right\vert $ (cf. (\ref{eq:ClassStates})) is replaced by a
finite dimensionality assumption: $\rho\in\mathcal{H}_{d}$ where
\[
\mathcal{H}_{d}=\left\{  \left\vert \psi\right\rangle \left\langle
\psi\right\vert :\psi_{j}=0\text{, }j\geq d\right\}
\]
and $d$ is known. One observes $n$ identical copies of the pure state
$\rho=\left\vert \psi\right\rangle \left\langle \psi\right\vert $, with
possibly $d=d_{n}\rightarrow\infty$, i. e. the model $\mathcal{Q}_{n}$ of
\eqref{eq.Q.n.model} is replaced by
\[
Q_{n}:=\left\{  \rho^{\otimes n}:\rho\in\mathcal{H}_{d}\right\}  .
\]
Since $\mathcal{H}_{d}$ can be written $\mathcal{H}_{d}=\mathcal{S}_{1,d}$
where%
\[
\mathcal{S}_{r,d}:=\left\{  \rho:\left\langle e_{i}|\rho|e_{j}\right\rangle
=0,i,j\geq d,\mathrm{rank}(\rho)=r\right\}  ,
\]
the model is effectively a special case of the $d\times d$ density matrices of
$\mathrm{rank}(\rho)=r$ considered in \cite{Kolt-Xia-MLearn}. In
\cite{Kolt-Xia-MLearn} however, it is not known in advance that $r=1$ but
$\rho$ is a density matrix of possibly low rank $r$, and the aim is estimation
of $\rho$ using quantum state tomography performed on $n$ identical copies of
$\rho$. Data are obtained by defining an observable $\otimes_{i=1}^{n}E_{i}$
where $E_{1},\ldots,E_{n}$ are i.i.d. uniformly selected elements of the Pauli
basis of the linear space of $d\times d$ Hermitian matrices, and applying the
corresponding measurement to $\rho^{\otimes n}$. Let $\hat{\rho}_{n}^{\ast}$
denote an arbitrary estimator of $\rho$ based on that measurement. A lower
asymptotic risk bound for norm-one risk is established; in the special case
$d^{2}r^{2}=o\left(  n\right)  $ it reads as
\begin{equation}
\inf_{\hat{\rho}_{n}^{\ast}}\sup_{\rho\in\mathcal{S}_{r,d}}\mathbb{E}_{\rho
}\left[  \left\Vert \hat{\rho}_{n}^{\ast}-\rho\right\Vert _{1}^{2}\right]
\geq c \frac{r^{2}d^{2}}{n}\label{KX-bound}%
\end{equation}
for some $c>0$ (Theorem 10 in \cite{Kolt-Xia-MLearn}). It is also shown in
\cite{Kolt-Xia-MLearn}\textit{ }that (\ref{KX-bound}) is attained, up to a
different constant and logarithmic terms, by an entropy penalized least
squares type estimator based on measurement of $\otimes_{i=1}^{n}E_{i}$, even
when the rank $r$ is unknown. Analogous optimal rates for $d\times d$ mixed
states $\rho$ with Pauli measurements, but under sparsity assumptions on  the entries of the matrix
$\rho$ have been obtained in \cite{CaiWangZhou-tomo}.
\newline Returning to our setting of pure states, where
$r=1$ is known, with an infimum over \textit{all measurements of}
$\rho^{\otimes n}$ and corresponding estimators $\hat{\rho}_{n}$, according to
\cite{Hayashi}\textbf{ }one has
\begin{equation}
\inf_{\hat{\rho}_{n}}\sup_{\rho\in\mathcal{S}_{1,d}}\mathbb{E}_{\rho}\left[
\left\Vert \hat{\rho}_{n}-\rho\right\Vert _{1}^{2}\right]  = \frac{4\left(
d-1\right)  }{d+n}\label{Hay-bound}%
\end{equation}
and the bound is attained by an estimator of the pure state $\rho$ based on
the covariant measurement, cf. equation (B.8) \cite{Butucea&Guta&Nussbaum}.
Comparing (\ref{KX-bound}) for $r=1$ and $d_{n}\rightarrow\infty$,
$d_{n}=o\left(  n\right)  $ with (\ref{Hay-bound}), we find that the latter
bound is of order $d_{n}/n$ whereas the former is of order $d_{n}^{2}/n$. It
means that for estimation of finite dimensional pure states, estimators based
on the Pauli type measurement $\otimes_{i=1}^{n}E_{i}$ do not attain the
optimal rate when $d_{n}\rightarrow\infty$. It may be conjectured that the
same holds for the optimal rate over $\rho\in S^{\alpha}\left(  L\right)  $,
i.e. our rate of Theorem \ref{thm:UBestim}. We emphasize again that our
results establish lower asymptotic risk bounds over all quantum measurements
and estimators, whereas lower risk bounds within one specific measurement
scheme \cite{Koltchinskii-von-Neumann} \cite{Kolt-Xia-MLearn}\textit{
}\cite{CaiWangZhou-tomo} are essentially results of non-quantum classical
statistics.

\smallskip

\textit{Separate measurements.} A notable fact is also that $\otimes_{i=1}^{n}E_{i}$ is a \textit{separate
}(or local) measurement, i.e. produces independent random variables (or random
elements) $Y_{1},\ldots,Y_{n}$ each based on a measurement of a copy of $\rho
$, whereas the covariant measurement (cp.
equation (B.8) \cite{Butucea&Guta&Nussbaum})
we used for attainment our risk bound of Theorem \ref{thm:UBestim}
is of \textit{collective }(or joint)\textit{ }type with regard to the product
$\rho^{\otimes n}$. Separate measurements are of interest from a practical
point of view since collective measurements of large quantum systems may be
unfeasible in implementations \cite{Nair-realiz}. \textbf{ }In
\cite{Bagan-etal-pure} it is shown that for fixed $d=2$, the bound
(\ref{Hay-bound}) can be attained asymptotically as $n\rightarrow\infty$ (up
to a factor $1+o\left(  1\right)  $) by a separate measurement of
$\rho^{\otimes n}$; it is an open question whether in our infinite dimensional
setting, the optimal rate of Theorem \ref{thm:UBestim} can be attained by a
separate measurement. For mixed qubits ($d=2$), an asymptotic efficiency gap
between separate and collective measurements is known to exist
\cite{Bagan-etal-sepa}.



\bigskip

\section*{Supplement to "Local asymptotic equivalence of pure states ensembles and quantum Gaussian white noise"}

\medskip

A more detailed over\-view of asymptotic equivalence for classical models is provided in Appendix A.1. The results on quadratic functionals and nonparametric testing are further discussed   in Appendix A.2 and A.3. Proofs of all results are given  in Appendix B.

\section{Further Discussion} \label{sec:furtherdisc}

\subsection{Classical models}

\label{sec.classical.models} 
Here we review several asymptotic normality results for classical models
which are analogous to the quantum models investigated in the paper.

A \emph{classical} statistical model is defined as a family of probability
distributions $\mathcal{Q} = \{\mathbb{P}_f :\, f\in \mathcal{W}\}$ on a
measurable space $(\mathcal{X}, \mathcal{A})$, indexed by an unknown,
possibly infinite dimensional parameter $f$ to be estimated, which belongs
to a parameter space $\mathcal{W}$.
In the asymptotic framework considered here we assume that we are given a
(large) number $n$ of independent, identically distributed samples $X_1,
\dots , X_n$ from $\mathbb{P}_f$, from which we would like to estimate $f$.
If $d :\mathcal{W}\times \mathcal{W}\to \mathbb{R}_+$ is a chosen loss
function, then the risk of an estimator $\hat{f}_n = \hat{f}_n(X_1, \dots ,
X_n)$ is
\begin{equation*}
R(\hat{f}_n, f) = \mathbb{E}_f \left[ d(\hat{f}_n, f)^2\right].
\end{equation*}

In nonparametric statistics, the parameter of the model $f$ is often a
function that belongs to a smoothness class.
We consider two classes $\mathcal{W}$: the periodic Sobolev class
$\mathcal{S}^{\alpha}(L)$ of functions on $[0,1]$ with smoothness $\alpha
>1/2$, and the H\"{o}lder class $\Lambda^{\alpha}(L)$, with smoothness
$\alpha>0$. For any $f\in\mathbb{L}_{2}[0,1]$, let $\left\{  f_{j}%
,j\in\mathbb{Z}\right\}  $ be the set of Fourier coefficients with respect to
the standard trigonometric basis.   The classes are defined as
$$
\mathcal{S}^{\alpha}(L):=\left\{  f:[0,1]\rightarrow\mathbb{R}\,:\sum
_{j\in\mathbb{Z}}\,\int|f_{j}|^{2}|j|^{2\alpha}du\leq L\right\}  .
$$
and
$$
\Lambda^{\alpha}(L):=\left\{  f:[0,1]\rightarrow\mathbb{R}\,:\,|f(x)-f(y)|\leq
L|x-y|^{\alpha},\,x,\,y\in\lbrack0,1]\right\}  .
$$
In addition, when densities $f$ are considered, we will assume that
$\mathcal{W}$  includes an additional restriction to a class%
$$
\mathcal{D}_{\varepsilon}=\left\{  f:[0,1]\rightarrow\mathbb{[\varepsilon
},\infty\mathbb{)}:\int_{[0,1]}f(x)dx=1\right\}
$$
for some $\varepsilon>0$.

\textit{Density model.} The classical density model consists of $n$
observations $X_{1},\ldots,X_{n}$ which are independent, identically
distributed (i.i.d.) with common probability density $f$
\[
\mathcal{P}_{n}=\left\{  \mathbb{P}_{f}^{\otimes n}\,:\,f\in\mathcal{W}%
\right\}  .
\]
\textit{Gaussian regression model with fixed equidistant design. } In this
model, we observe $Y_{1},...,Y_{n}$ such that
\[
Y_{i}=f^{1/2}\left(  \frac{i}{n}\right)  +\xi_{i},\quad i=1,...,n,
\]
where the errors $\xi_{1},...,\xi_{n}$ are i.i.d., standard Gaussian
variables. Denote the Gaussian regression model by
\[
\mathcal{R}_{n}=\left\{  \bigotimes_{i=1}^{n}\mathcal{N}\left(  f^{1/2}\left(
\frac{1}{n}\right)  ,1\right)  :f\in\mathcal{W}\right\}  .
\]
\textit{Gaussian white noise model.} In this model the square-root density
$f^{1/2}$ is observed with Gaussian white noise of variance $n^{-1}$, i.e.
\begin{equation}
dY_{t}=f^{1/2}(t)dt+\frac{1}{\sqrt{n}}dW_{t},\quad t\in\lbrack0,1].\label{gwm-supp}%
\end{equation}
If we denote by $\mathbb{Q}_{f}$ the probability distribution of
$\{Y(t)\,:\,t\in\lbrack0,1]\}$, the corresponding model is
\[
\mathcal{F}_{n}:=\left\{  \mathbb{Q}_{f}\,:\,f\in\mathcal{W}\right\}  .
\]
\textit{Gaussian sequence model.} In this model we observe a sequence of
Gaussian random variables with means equal to the coefficients of $f^{1/2}$ in
some orthonormal basis of $\mathbb{L}_{2}[0,1]$ for $f\in\mathcal{F}$
\begin{equation}
y_{j}=\theta_{j}(f^{1/2})+\frac{1}{\sqrt{n}}\xi_{j},\qquad i=1,2,\ldots
\label{eq.gsm-supp}%
\end{equation}
where $\{\xi_{i}\}_{i\geq1}$ are Gaussian i.i.d. random variables. We denote
this model
\[
\mathcal{N}_{n}=\left\{  \bigotimes_{j\geq1}\mathcal{N}\left(  \theta
_{j}\left(  f^{1/2}\right)  ,\frac{1}{n}\right)  :f\in\mathcal{W}\right\}  .
\]

In \cite{Nussbaum96} it was shown that the sequences of models $\mathcal{P}%
_{n}$ and $\mathcal{F}_{n}$ are asymptotically equivalent in the sense that
their Le Cam distance converges to zero as $n\rightarrow\infty$ when
$\mathcal{W}=\Lambda^{\alpha}(L)\cap\mathcal{D}_{\varepsilon}$  with $\alpha>1/2$; in \cite{BrownLow}, a similar result was
established for $\mathcal{R}_{n}$ and $\mathcal{F}_{n}$ (more precisely, with
$f^{1/2}$ any real valued function  $f^{1/2}\in\Lambda^{\alpha}(L)$). Later,
\cite{Rohde} showed that models $\mathcal{F}_{n}$ and $\mathcal{N}_{n}$ are
asymptotically equivalent over periodic Sobolev classes $f^{1/2}\in
\mathcal{S}^{\alpha}(L)$ with smoothness $\alpha>1/2$. Among many other
results \cite{GramaNussbaum} considered generalized linear models,
\cite{BCLZ2002} regression models with random design and \cite{Reiss}
multivariate and random design, \cite{GolubevNussbaumZhou} compared the
stationary Gaussian process with the Gaussian white noise model $\mathcal{F}%
_{n}$. In \cite{RaySchmidt} sharp rates of convergence are obtained for the
equivalence of $\mathcal{P}_{n}$ and $\mathcal{F}_{n}$, including also Poisson process models.

In all classical results, the underlying nonparametric function was assumed
to belong to a smoothness class in order to establish asymptotic equivalence
of models. In the quantum setup of pure states and Gaussian states that we
discuss in Section \ref{sec.QLAN}, no such smoothness assumption is needed.

\subsection{Quadratic Functionals\\}

\textit{The elbow phenomenon.} The change of regime which occurs in the
optimal MSE rate $\eta_{n}^{2}$ in \eqref{rate:eta} has been
described as the elbow phenomenon in the literature \cite{Cai-Low-nonquad}. In
the classical Gaussian sequence model, it takes the following shape. Consider
observations introduced in (\ref{eq.gsm}):
\begin{equation*}
y_{j}=\vartheta_{j}+n^{-1/2}\xi_{j}\text{, }j=1,2,\ldots,
\end{equation*}
where $\left\{  \xi_{j}\right\}  $ are i.i.d. standard normal, and the
parameter $\vartheta=\left(  \vartheta_{j}\right)  _{j=1}^{\infty}$ satisfies
a restriction $\sum_{j=1}^{\infty}j^{2\alpha}\vartheta_{j}^{2}\leq L$ for some
$\alpha>0$. For estimation of the quadratic functional $\tilde{F}\left(
\vartheta\right)  =\sum_{j=1}^{\infty}j^{2\beta}\vartheta_{j}^{2}$ with
$\beta<\alpha$, the minimax MSE\ rate of convergence is
\[
\tilde{\eta}_{n}^{2}=\left\{
\begin{tabular}
[c]{l}%
$n^{-1}$ if $\alpha\geq2\beta+1/4$\\
$n^{-2\frac{4\left(  \alpha-\beta\right)  }{4\alpha+1}}$ if $\beta
<\alpha<2\beta+1/4$%
\end{tabular}
\ \ \right.  =n^{-2\tilde{r}}\text{ for }\tilde{r}=\min\left(  \frac{1}%
{2},\frac{4\left(  \alpha-\beta\right)  }{4\alpha+1}\right)
\]
(cf \cite{klem} and references cited therein). The same rate holds for
estimation of the squared $L_{2}$-norm of the $\beta$-th derivative of a
density in an $\alpha$-H\"{o}lder class, cf. \cite{Bick-Rit-88}. Comparing
with our rate $\eta_{n}^{2}$ in \eqref{rate:eta} which can be
written $\eta_{n}^{2}=n^{-2r}$ for $r=\min\left(  \frac{1}{2},\frac{4\left(
\alpha-\beta\right)  }{4\alpha}\right)  $, we see that both rates exhibit the
elbow phenomenon, but at different critical values for $(\alpha,\beta)$, and
the rate for the quantum case is slightly faster in the region $\alpha
<2\beta+1/4$.

\smallskip

\textit{A tail functional of a discrete distribution.} Our method of proof for
the optimal rate $\eta_{n}^{2}=n^{-2r}$ shows that it is also the optimal rate
in the following non-quantum problem: suppose $P=\left\{  p_{j}\right\}
_{j=0}^{\infty}$ is a probability measure on the nonnegative integers,
satisfying a restriction $\sum_{j=0}^{\infty}j^{2\alpha}p_{j}\leq L$, and the
aim is to estimate the linear functional $F_{0}\left(  P\right)  =\sum
_{j=0}^{\infty}j^{2\beta}p_{j}$
on the basis of $n$ i.i.d. observations $X_{1},\ldots,X_{n}$
having law $P$. Indeed, Theorem 5.4 shows that the estimator $\hat{F}_{n}%
=\sum_{j=0}^{N}j^{2\beta}\hat{p}_{j}$ with $\hat{p}_{j}=n^{-1}\sum_{i=1}%
^{n}I\left(  X_{i}=j\right)  $ attains the rate $\eta_{n}^{2}$ for mean square
error, for an appropriate choice of $N$. On the other hand, the observations
$X_{1},\ldots,X_{n}$ are obtained from one specific measurement in the quantum
model \eqref{eq.Q.n.model}, in such a way that $p_{j}=\left\vert
\psi_{j}\right\vert ^{2}$ for $j\geq0$ and $F_{0}\left(  P\right)  =F\left(
\psi\right)  $. If the rate $\eta_{n}^{2}$ is unimprovable in the quantum
model then it certainly is in the present derived (less informative) classical
model. In the latter model, we note that since $F_{0}\left(  P\right)  $ is
linear and the law $P$ is restricted to a convex body, optimality of the rate
$\eta_{n}^{2}$ can be confirmed by standard methods, e.g. based on the concept
of modulus of continuity \cite{Don-Liu}. The current problem is thus an
example where the elbow phenomenon is present for estimation of a linear
functional; a specific feature here is that the probability measure $P$ is discrete.

\smallskip

\textit{Fuzzy quantum hypotheses.} Our method of proof of the lower bound for
quadratic functionals, which works in the approximating quantum Gaussian
model, utilizes the well-known idea of setting up two prior distributions and
then invoking a testing bound between simple hypotheses. This has been
described as the method of fuzzy hypotheses in the literature \cite{Tsyb}. A
summary of the present quantum variant could be as follows. First, the
Gaussian quantum model is represented in a fashion analogous to the classical
sequence model (\ref{eq.gsm}) where the $\vartheta_{j}$
correspond to the displacement parameter $u_{j}$ in certain Gaussian pure
states (the coherent states). These displacement parameters are then assumed
to be random as independent, non-identically distributed normal, for
$j=1,\ldots,N$ where $N=o(n)$. Now Gaussian averaging over the displacements
$u_{j}$ leads to certain non-pure Gaussian states, i.e. the thermal states as
the alternative, which happen to commute with the vacuum pure state
(corresponding to $u_{j}=0$) as the null hypothesis. Even though both are
again Gaussian states, by commutation the problem is reduced to testing
between two ordinary discrete probability distributions, i.e. the point mass
at $0$ and a certain geometric distribution with parameter $r_{j}$, depending
on $j=1,\ldots,N$. The combined error probability for this classical testing
problem with $N$ independent observations gives the lower risk bound.

\subsection{Nonparametric Testing\\}

\textit{The separation rate }$n^{-1/2}$. Recall that for the classical
Gaussian sequence model (\ref{eq.gsm}), for the testing
problem%
\begin{equation}%
\begin{array}
[c]{ll}%
H_{0}: & \vartheta=0\\
H_{1}(\varphi_{n}): & \sum_{j=1}^{\infty}j^{2\alpha}\vartheta_{j}^{2}\leq
L\text{ and }\left\Vert \vartheta\right\Vert _{2}\geq\varphi_{n}%
\end{array}
\label{test-a-2}%
\end{equation}
(Sobolev ellipsoid with an $L_{2}$-ball removed), the separation rate is
$\varphi_{n}=n^{-2\alpha/\left(  4\alpha+1\right)  }$ \cite{IngsterSuslina}.
We established that $\varphi_{n}=n^{-1/2}$ is the separation rate for the
quantum nonparametric testing problem (\ref{test-a}) involving a pure state
$\rho$. While this ``parametric'' rate for a nonparametric problem is somewhat
surprising, it should be noted that  there also exist testing problems for
classical i.i.d. data with nonparametric alternative where that separation
rate applies; cf \cite{IngsterSuslina}, sec. 2.6.2.

In our case, the rate $n^{-1/2}$  appears to be related to the fast rate
$\varphi_{n}^{2}=n^{-1}$ in the following nonparametric classical  problem:
given $n$ i.i.d. observations $X_{1},\ldots,X_{n}$ having law $P=\left\{
p_{j}\right\}  _{j=0}^{\infty}$ on the nonnegative integers, the hypotheses
are
\begin{equation}%
\begin{array}
[c]{ll}%
H_{0}: & P=\delta_{0}\text{ (the degenerate law at }0\text{)}\\
H_{1}(\varphi_{n}): & \left\Vert P-\delta_{0}\right\Vert _{1}\geq\varphi
_{n}^{2}.
\end{array}
\label{test-a-3}%
\end{equation}
For that, note first that
\[
\left\Vert P-\delta_{0}\right\Vert _{1}=1-p_{0}+\sum_{j=1}^{\infty}%
p_{j}=2\left(  1-p_{0}\right)  .
\]
The likelihood ratio test for $\delta_{0}$ against any $P\in H_{1}(\varphi
_{n})$ rejects if $\max_{1\leq j\leq n}X_{j}$ $>0$, thus it does not depend on
$P$. The pertaining sum of error probabilities is
\[
P\left(  \max_{1\leq j\leq n}X_{j}=0\right)  =p_{0}^{n}=\left(  1-\frac{1}%
{2}\left\Vert P-\delta_{0}\right\Vert _{1}\right)  ^{n}\leq\left(  1-\frac
{1}{2}\varphi_{n}^{2}\right)  ^{n}%
\]
and with a supremum over $P\in H_{1}(\varphi_{n})$, the upper bound is
attained.  This means that for $\varphi_{n}=cn^{-1/2}$, the minimax sum of
error probabilities tends to $\exp\left(  -c^{2}/2\right)  $, so that
$\varphi_{n}^{2}=n^{-1}$ is the separation rate here as claimed.

In fact there is a direct connection to the quantum nonparametric testing
problem (\ref{test-a}): in the latter, for $n=1$, consider a measurement
defined as follows. Let $\left\{  \left\vert \tilde{e}_{j}\right\rangle
\right\}  _{j=0}^{\infty}$ be an orthonormal basis in $\mathcal{H}$ such that
$\rho_{0}=\left\vert \tilde{e}_{0}\right\rangle \left\langle \tilde{e}%
_{0}\right\vert $ and consider the POVM $\left\{  \left\vert \tilde{e}%
_{j}\right\rangle \left\langle \tilde{e}_{j}\right\vert \right\}
_{j=0}^{\infty}$; the corresponding measurement yields  a probability measure
$P$ on the nonnegative integers. Here the state $\rho_{0}$ is mapped into
$\delta_{0}$ and an alternative state $\rho$ is mapped into $P=\left\{
p_{j}\right\}  _{j=0}^{\infty}$ such that $p_{0}=\mathrm{Tr}\left(  \rho
_{0}\rho\right)  $. Condition \textbf{(}\ref{distance-cond})\textbf{ }on the
distance of the two states implies (cp (\ref{eq.trace.norm.pure}))
\[
\varphi_{n}\leq\left\Vert \rho-\rho_{0}\right\Vert _{1}=2\sqrt{1-\mathrm{Tr}%
\left(  \rho_{0}\rho\right)  }=2\sqrt{1-p_{0}}=\sqrt{2\left\Vert P-\delta
_{0}\right\Vert _{1}}%
\]
so that up to a constant, the testing problem (\ref{test-a-3}) is obtained.

In the quantum problem (\ref{test-a}), we noted that the optimal test between
$\rho_{0}$ and a specific alternative $\rho$ depends on $\rho$, but found that
the test (binary POVM) $M_{n}=\left\{  \rho_{0}^{\otimes n},I-\rho
_{0}^{\otimes n}\right\}  $ is minimax optimal in the sense of the rate and
also in the sense of a sharp risk asymptotics. The sharp minimax optimality
seems to be a specific result for the quantum case. We note that the optimal
test $M_{n}$ can be realized via a measurement $\left\{  \left\vert \tilde
{e}_{j}\right\rangle \left\langle \tilde{e}_{j}\right\vert \right\}
_{j=0}^{\infty}$ as described above, applied separately to each component of
$\rho^{\otimes n}$, resulting in independent identically distributed  r.v.'s
$X_{1},\ldots,X_{n}$. The test $M_{n}$ then amounts to rejecting $H_{0}$ if
$\max_{1\leq j\leq n}X_{j}>0$.
Note that this measurement is incompatible with the one (\ref{eq.covmeas}) providing the optimal rate for state estimation.


\textit{Other separation rates.} In our proof of the lower bound for quadratic
functionals, we formulate  the nonparametric testing problem for pure states (\ref{eq.test-for-quad}) where the alternative includes the
restriction $\sum_{j\geq0}\left\vert \psi_{j}\right\vert ^{2}j^{2\beta}%
\geq\eta_{n}$, and establish that the rate $\eta_{n}=n^{-1+\beta/\alpha}$ is
unimprovable there. Introduce a seminorm
\[
\left\Vert \psi\right\Vert _{2,\beta}=\left(  \sum_{j\geq1}\left\vert \psi
_{j}\right\vert ^{2}j^{2\beta}\right)  ^{1/2}%
\]
(excluding the term for $j=0$) and write the restriction as
\begin{equation}
\left\Vert \psi\right\Vert _{2,\beta}\geq\varphi_{n}=\eta_{n}^{1/2}%
;\label{restric-beta}%
\end{equation}
then the case $\beta=0$ gives (cp (\ref{eq.trace.norm.pure}))
\[
\varphi_{n}^{2}\leq\sum_{j\geq1}\left\vert \psi_{j}\right\vert ^{2}%
=1-\left\vert \psi_{0}\right\vert ^{2}=1-\left\vert \left\langle \psi
|e_{0}\right\rangle \right\vert ^{2}=\frac{1}{4}\left\Vert \left\vert
e_{0}\right\rangle \left\langle e_{0}\right\vert -\left\vert \psi\right\rangle
\left\langle \psi\right\vert \right\Vert _{1}^{2},
\]
in other words, for $\rho_{0}=\left\vert e_{0}\right\rangle \left\langle
e_{0}\right\vert $ and $\rho=\left\vert \psi\right\rangle \left\langle
\psi\right\vert $, the restriction (\ref{restric-beta}) is equivalent to
$\left\Vert \rho-\rho_{0}\right\Vert _{1}\geq2\varphi_{n}$. In that sense, the
testing problems (\ref{test-a}) and (\ref{eq.test-for-quad}) in  are
equivalent up to a constant, if $\beta=0$ and $\rho_{0}=\left\vert
e_{0}\right\rangle \left\langle e_{0}\right\vert $. For $\beta>0$, the testing
problem (\ref{eq.test-for-quad}) in  is a quantum pure state analog of
the generalization of the classical problem (\ref{test-a-2}) where $\left\Vert
\vartheta\right\Vert _{2}\geq\varphi_{n}$ is replaced by $\left\Vert
\vartheta\right\Vert _{2,\beta}\geq\varphi_{n}$ ($\alpha$-ellipsoid with a
$\beta$-ellipsoid removed); the separation rate in the latter is $\varphi
_{n}=n^{-2\left(  \alpha-\beta\right)  /\left(  4\alpha+1\right)  }$ , cf.
\cite{IngsterSuslina}, sec. 6.2.1. In (\ref{eq.test-for-quad}) the
separation rate is $\varphi_{n}=n^{-1/2+\beta/2\alpha}$, i.e. of the more
typical nonparametric form as well.

\section{Proofs} \label{sec:proofs}

\begin{proof}[Proof of Theorem~\protect\ref{th.qlan}]



The direct map channel $T_n$ is defined as an isometric embedding
\begin{eqnarray*}
T_n : \mathcal{T}_1(\mathcal{H}^{\otimes_s n})& \to & \mathcal{T}_1(\mathcal{%
F} (\mathcal{H}_0)) \\
\rho &\mapsto& V_n \rho V_n^*.
\end{eqnarray*}
where $V_n: \mathcal{H}^{\otimes_s n}\to \mathcal{F}(\mathcal{H}_0)$ is an
isometry defined below. Since we deal with pure states, it suffices to prove
that
\begin{equation}\label{eq.convergence.psiu}
\underset{n\to\infty}{\lim\sup}\sup_{|\psi_0\rangle \in \mathcal{H}}
\sup_{\|u \|\leq \gamma_n} \left\| V_n \psi_{u}^{\otimes n} - G(\sqrt{n} u)
\right\| = 0.
\end{equation}
We now define the isometric embedding $V_n$ by showing its explicit action
on the vectors of an ONB. For any permutation $\sigma\in S_n$, let
\begin{equation*}
U_\sigma: |u_1\rangle \otimes \dots \otimes |u_n\rangle \mapsto |
u_{\sigma^{-1}(1)} \rangle \otimes \dots \otimes |u_{\sigma^{-1}(n)} \rangle
\end{equation*}
be the unitary action on $\mathcal{H}^{\otimes n}$ by tensor permutations.
Then $P_s := \frac{1}{n!} \sum_{\sigma\in S_n} U_{\sigma}$ is the orthogonal
projector onto the subspace of symmetric tensors $\mathcal{H}^{\otimes_s n}$%
. We construct an orthonormal basis in $\mathcal{H}^{\otimes_s n}$ as
follows.

Let $B_0:= \{ |e_1\rangle , |e_2\rangle, \dots\}$ be an orthonormal basis in
$\mathcal{H}_0$. Let $\tilde{\mathbf{n}} =(n_0, \mathbf{n})= (n_0, n_1,
\dots)$ be an infinite sequence of integers such that $\sum_{i\geq 0} n_i= n$%
, and note that only a finite number of $n_i$s are different from zero. Then
the symmetric vectors
\begin{equation*}
|\tilde{\mathbf{n}} \rangle = |n_0, n_1, n_2, \dots \rangle := \sqrt{ \frac{%
n!}{ n_0 ! \cdot n_1! \cdot \dots } } \,  P_s \left[ |\psi_0\rangle^{\otimes
n_0} \otimes \bigotimes_{i\geq 1} |e_i\rangle^{\otimes n_i} \right]
\end{equation*}
form an ONB of $\mathcal{H}^{\otimes_s n}$.

As discussed in section \ref{sec.Fock.spaces} the Fock space $\mathcal{F}(%
\mathcal{H}_0)$ can be identified with the infinite tensor product of
one-mode Fock spaces $\bigotimes_{i\geq 1} \mathcal{F}(\mathbb{C}|e_i
\rangle )$ which has an orthonormal number basis (or Fock basis) consisting
of products of number basis vectors of individual modes
\begin{equation*}
|\mathbf{n} \rangle := \bigotimes_{i\geq 1} |n_i\rangle
\end{equation*}
where $n_i\neq 0$ only for a finite number of indices. We define $V_n:
\mathcal{H}^{\otimes_s n}\to \mathcal{F}(\mathcal{H}_0)$ as follows
\begin{equation*}
V_n: |\tilde{\mathbf{n}}\rangle \mapsto |\mathbf{n}\rangle.
\end{equation*}
Its image consists of states with at most $n$ ``excitations'', with $%
|\psi_0\rangle^{\otimes n}$ being mapped to the vacuum state $|\mathbf{0}%
\rangle$. 
%
We would like to show that the embedded state $V_n |\psi_u \rangle^{\otimes
n}$ are well approximated by the coherent states $|G(\sqrt{n} u)\rangle$
uniformly over the local neighbourhood $\|u\|\leq \gamma_n$. For this we
will make use of the covariance and functorial properties of the second
quantisation construction in order to reduce the non-parametric LAE
statement to the corresponding one for 2-dimensional systems.

Let $|u\rangle\in \mathcal{H}_0$ be a fixed unit vector. Let $j: \mathbb{C}%
^2 \mapsto \mathcal{H}$ be the isometric embedding
\begin{equation*}
j:|0\rangle \mapsto |\psi_0\rangle, \qquad j:|1\rangle \mapsto |u\rangle
\end{equation*}
and let $j_0: \mathbb{C} |1\rangle\to \mathcal{H}_0$ be the restriction of $j
$ to the one dimensional subspace $\mathbb{C}|1\rangle$.
Since second quantisation is functorial under contractive maps, there is a
corresponding isometric embedding $J_0= \Gamma (j_0)$ satisfying
\begin{eqnarray}
J_0: \mathcal{F}(\mathbb{C} |1\rangle ) &\to& \mathcal{F} (\mathcal{H}_0)
\notag \\
|G(\alpha)\rangle &\mapsto & |G(j_0 (\alpha)) \rangle = |G(\alpha u )\rangle.
\label{eq.j.embedding}
\end{eqnarray}
Let $\tilde{V}_n : \left(\mathbb{C}^2\right)^{\otimes_s n}\to \mathcal{F}(%
\mathbb{C} |1\rangle)$ be the isometry constructed in the same way as $V_n$,
where $|0\rangle$ plays the role of $|\psi_0\rangle$ and $\mathbb{C}|1\rangle
$ is the analogue of $\mathcal{H}_0$. As before, let $|\tilde{\psi}_{\alpha} \rangle
= \sqrt{1-|\alpha|^2 } |0\rangle+ \alpha|1\rangle$, with $|\alpha|\leq 1$.
Then by the properties of the embedding map $V_n$ we have
\begin{equation}  \label{eq.j.v}
J_0 \tilde{V}_n |\tilde{\psi}_{\alpha}  \rangle^{\otimes n}= V_n | \psi_{\alpha u
}\rangle^{\otimes n}.
\end{equation}
From equations \eqref{eq.j.embedding} and \eqref{eq.j.v} we find
\begin{equation*}
\sup_{|\alpha|\leq \gamma_n} \left\| V_n \psi_{\alpha u}^{\otimes n} - G(%
\sqrt{n} \alpha u) \right\| = \sup_{|\alpha|\leq \gamma_n} \left\| \tilde{V}%
_n \tilde{\psi}_{\alpha} ^{\otimes n} - G(\sqrt{n} \alpha) \right\|
\end{equation*}
Since the right-hand side of the above equality is independent of $|u\rangle$
the same equality holds with supremum on the left side taken over all $%
|u\rangle \in\mathcal{H}_0$ with $ \| u\| =1$, which is the same as the supremum in equation \eqref{eq.convergence.psiu}.
Therefore the LAE for the non-parametric models has been reduced to that of a two-dimensional
(qubit) model.
This approximation has been established in the more general case of mixed states in \cite{GutaKahn2006,Guta&Janssens&Kahn}, but the current case of pure states allows an improvement in rate.
The product state $|\tilde{\psi}_{\alpha} \rangle^{\otimes n}$ is mapped into the
following pure state on the Fock space $\mathcal{F}(\mathbb{C} |1\rangle)$
$$
\tilde{V}_n |\tilde{\psi}_{\alpha}  \rangle^{\otimes n} =
\sum_{k=0}^n c_{k,n}(\alpha) |k\rangle, \quad  c_{k,n}(\alpha) = \alpha^k (1- |\alpha|^2)^{(n-k)/2} \sqrt{n \choose k}.
$$
On the other hand, in view of \eqref{eq.coherent.state} the coherent state can be written as
$$
G(\sqrt{n} \alpha) = \sum_k c_k(\sqrt{n}\alpha) |k\rangle, \quad
c_k(\sqrt{n}\alpha):= \exp(- n |\alpha|^2/2 )\frac{(\sqrt{n} \alpha)^{k}}{\sqrt{k!}}.
$$
Set $\alpha=\phi_{\alpha}\left\vert \alpha\right\vert $ where $\phi_{\alpha}$
is a phase; then it follows that  $c_{k,n}\left(  \alpha\right)  =\phi
_{\alpha}^{k}$ $c_{k,n}\left(  \left\vert \alpha\right\vert \right)  $ and
$c_{k}\left(  \sqrt{n}\alpha\right)  =\phi_{\alpha}^{k}$ $c_{k}\left(
\sqrt{n}\left\vert \alpha\right\vert \right)  $. With this we have
\begin{align}
\left\Vert \tilde{V}_{n}\tilde{\psi}_{\alpha}^{\otimes n}-G\left(  \sqrt
{n}\alpha\right)  \right\Vert ^{2}  & =\sum_{k=0}^{\infty}\left\vert
c_{k,n}\left(  \alpha\right)  -c_{k}\left(  \sqrt{n}\alpha\right)  \right\vert
^{2}\nonumber\\
& =\sum_{k=0}^{\infty}\left\vert c_{k,n}\left(  \left\vert \alpha\right\vert
\right)  -c_{k}\left(  \sqrt{n}\left\vert \alpha\right\vert \right)
\right\vert ^{2}.\label{squared-Hell}%
\end{align}
Let $X$ be a binomial r.v. with parameters $n,\left\vert \alpha\right\vert
^{2}$ and $Y$ be a Poisson r.v. with parameter  $n\left\vert \alpha\right\vert
^{2}$. Note that $c_{k,n}\left(  \left\vert \alpha\right\vert \right)
=P\left(  X=k\right)  ^{1/2}$ and $c_{k}\left(  \sqrt{n}\left\vert
\alpha\right\vert \right)  =P\left(  Y=k\right)  ^{1/2}$, and that therefore
(\ref{squared-Hell}) is the squared Hellinger distance between these two laws.
According to Theorem 1.3.1 (ii) in \cite{Reiss-book} we have
\[
\sum_{k=0}^{\infty}\left\vert c_{k,n}\left(  \left\vert \alpha\right\vert
\right)  -c_{k}\left(  \sqrt{n}\left\vert \alpha\right\vert \right)
\right\vert ^{2}\leq3\left\vert \alpha\right\vert ^{4}.
\]
Since $\left\vert \alpha\right\vert \leq\gamma_{n}=o(1)$, we have shown the
first part of LAE in which the i.i.d. and Gaussian models are expressed
in terms of the local parameter $|u\rangle$
\begin{equation}
\underset{n\rightarrow\infty}{\lim\sup}\sup_{|\psi_{0}\rangle\in\mathcal{H}%
}\sup_{\Vert u\Vert\leq\gamma_{n}}\left\Vert V_{n}\psi_{u}^{\otimes n}%
-G(\sqrt{n}u)\right\Vert =0.\label{30a}%
\end{equation}
Conversely, we define the reverse channel $S_{n}:\mathcal{T}_{1}%
(\mathcal{F}(\mathcal{H}_0))\rightarrow\mathcal{T}_{1}\left(  \mathcal{H}%
^{\otimes_{s}n}\right)  $ as follows. Let $P_{n}$ denote the orthogonal
projection in $\mathcal{F}(\mathcal{H}_{0})$ onto the image space of $V_{n}$,
i.e. the subspace with total excitation number at most $n$
\[
\mathcal{F}_{\leq n}(\mathcal{H}_{0}):=\mathrm{Lin}\{|n_{1},n_{2},\dots
\rangle\,:\,\sum_{i\geq1}n_{i}\leq n\}.
\]
Let $R_{n}:\mathcal{F}(\mathcal{H}_{0})\rightarrow\mathcal{H}^{\otimes_{s}n}$
be a right inverse of $V_{n}$, i.e. $R_{n}V_{n}=\mathbf{1}$. Then the reverse
channel is defined as
\[
S_{n}(\rho)=R_{n}P_{n}\rho P_{n}R_{n}^{\ast}+\mathrm{Tr}(\rho(1-P_{n}%
))|\psi_{0}\rangle\langle\psi_{0}|^{\otimes n}.
\]
Operationally, the action of $S_{n}$ consists of two steps. We first perform a
projection measurement with projections $P_{n}$ and $(\mathbf{1}-P_{n})$; if
the first outcome occurs the conditional state of the system is $P_{n}\rho
P_{n}/\mathrm{Tr}(P_{n}\rho)$ , while if the second outcome occurs the state
is $(\mathbf{1}-P_{n})\rho(\mathbf{1}-P_{n})/\mathrm{Tr}((\mathbf{1}%
-P_{n})\rho)$. In the second stage, if the first outcome was obtained we map
the projected state through the map $R_{n}$ into a state in $\mathcal{H}%
^{\otimes_{s}n}$, while if the second outcome was obtained, we prepare the
fixed state $|\psi_{0}\rangle\langle\psi_{0}|^{\otimes n}$.

When applied to the pure Gaussian states $|G(\sqrt{n}u)\rangle$, the output of
$S_{n}$ is the mixed state
\[
S_{n}(|G(\sqrt{n}u)\rangle\langle G(\sqrt{n}u)|)=p_{u}^{n}|\phi_{u}^{n}%
\rangle\langle\phi_{u}^{n}|+(1-p_{u}^{n})|\psi_{0}\rangle\langle\psi
_{0}|^{\otimes n}%
\]
where
\[
|\phi_{u}^{n}\rangle:=\frac{R_{n}P_{n}|G(\sqrt{n}u)\rangle}{\sqrt{p_{u}^{n}}%
},\quad p_{u}^{n}=\Vert P_{n}G(\sqrt{n}u)\Vert^{2}.
\]
The key observation is that the Gaussian states are almost completely
supported by the subspace $\mathcal{F}_{\leq n}(\mathcal{H}_{0})$, uniformly
with respect to the ball $\Vert u\Vert\leq\gamma_{n}$. Indeed, since
$V_{n}\psi_{u}^{\otimes n}$ is in $\mathcal{F}_{\leq n}\left(  \mathcal{H}%
_{0}\right)  $, from (\ref{30a}) and the properties of projections it follows
\[
\limsup_{n\rightarrow\infty}\sup_{\left\vert \psi_{0}\right\rangle }%
\sup_{\left\Vert u\right\Vert \leq\gamma_{n}}\left\Vert P_{n}G\left(  \sqrt
{n}u\right)  -G\left(  \sqrt{n}u\right)  \right\Vert =0,
\]
so that
\begin{equation}
\underset{n\rightarrow\infty}{\lim\sup}\sup_{|\psi_{0}\rangle}\sup_{\Vert
u\Vert\leq\gamma_{n}}\left(  1-p_{u}^{n}\right)  =0.\label{30b}%
\end{equation}
Now again from (\ref{30a}) and the fact that $R_{n}$ is the inverse of $V_{n}$
it follows
\[
\limsup_{n\rightarrow\infty}\sup_{\left\vert \psi_{0}\right\rangle }%
\sup_{\left\Vert u\right\Vert \leq\gamma_{n}}\left\Vert \psi_{u}^{\otimes
n}-R_{n}P_{n}G\left(  \sqrt{n}u\right)  \right\Vert =0,
\]
which in conjunction with (\ref{30b}) implies
\[
\underset{n\rightarrow\infty}{\lim\sup}\sup_{|\psi_{0}\rangle}\sup_{\Vert
u\Vert\leq\gamma_{n}}\left\Vert S_{n}(|G(\sqrt{n} u)\rangle\langle
G(\sqrt{n} u)|)-|\psi_{u}\rangle\langle\psi_{u}|^{\otimes n}\right\Vert
_{1}=0.
\]
This completes the proof of \eqref{eq.LAN1}.
\end{proof}


\begin{proof}[Proof of Theorem~\protect\ref{thm:UBestim}]
According to inequalities \eqref{eq.distances.inequality} and \eqref{eq.distances.inequality2} the two distances are equivalent on pure states, so it suffices to prove the upper bound for the trace-norm distance.

Firstly, a projective operation is applied to each of the $n$ copies
separately, whose aim is to truncate the state to a finite dimensional
subspace of dimension $d_n = [n^{1/(2\alpha+1)}]+1$. Let $P_n$ be the
projection onto the subspace $\mathcal{H}_n$ spanned by the first $d_n$
basis vectors $\{|e_0\rangle, \dots ,|e_{d_n-1}\rangle\}$. For a given state $%
|\psi\rangle$ the operation consists of randomly projecting the state with $%
P_n$ or $(\mathbf{1} - P_n)$, which produces i.i.d. outcomes $O_i\in\{0,1\}$
with $\mathbb{P}(O_i= 1) = p_n = \|P_n \psi\|^2$. The posterior state
conditioned on the measurement outcome is
\begin{equation*}
|\psi\rangle\langle \psi| \mapsto \left\{
\begin{array}{ccc}
|\psi^{(n)} \rangle \langle\psi^{(n)} | := \frac{P_n |\psi\rangle\langle
\psi|P_n}{p_n} & \text{with probability } ~ & ~p_n \\
&  &  \\
\frac{(\mathbf{1}- P_n) |\psi\rangle\langle \psi |(\mathbf{1}- P_n)}{ 1- p_n
} & \text{with probability } & ~~1-p_n%
\end{array}
\right.
\end{equation*}
Since $|\psi\rangle \langle \psi|\in S^\alpha(L)$, the probability
$1-p_n$ is bounded as
\begin{equation}  \label{eq.1minuspn}
1-p_n = \sum_{i=d_n}^\infty |\psi_i|^2 = \sum_{i=d_n}^\infty
i^{-2\alpha} i^{2\alpha} |\psi_i|^2 \leq d_n^{-2\alpha} \sum_{i=1}^\infty
i^{2\alpha} |\psi_i|^2 = n^{-2\alpha/(2\alpha+1)} L.
\end{equation}
Let $\tilde{n} =\sum_{i=1}^n O_i$ be the number of systems for which the
outcome was equal to 1, so that $\tilde{n}$ has binomial distribution $%
\mathrm{Bin}(n, \,p_n)$. Then $\mathbb{E}(\tilde{n}/ n)=p_n$ and $\mathrm{Var}(%
\tilde{n}/ n)=p_n (1-p_n)/n=O(1/n)$. Therefore $\tilde{n}/ n \to $1 in
probability.

In the second step we discard the systems for which the outcome was $0$, and
we collect those with outcome $1$, so that the joint state is $|\psi^{(n)}
\rangle \langle\psi^{(n)} |^{\otimes \tilde n}$ which is supported by the
symmetric subspace $\mathcal{H}_n^{\otimes_s \tilde{n}}$. In order to
estimate the truncated state $|\psi^{(n)} \rangle$ (and by implication $%
|\psi\rangle $), we perform a \emph{covariant} measurement $M_n$ \cite%
{Hayashi} whose space of outcomes is the space of pure states $\hat{\rho}_n=|%
\hat{\psi}_n\rangle\langle \hat{\psi}_n |$ over $\mathcal{H}_n$, and the
infinitesimal POVM element is
\begin{equation}  \label{eq.covmeas}
M_n(d \hat{\rho}) ={\binom{{\tilde{n}+d_n-1} }{{d_n-1}}} \,\hat{\rho}^{\otimes n} \, d%
\hat{\rho}.
\end{equation}
The covariance property means that the unitary group has a covariant action
on states and their corresponding probability distributions
\begin{equation*}
\mathbb{P}^{M_n}_{U\rho U^*} (d\hat{\rho}) = \mathrm{Tr} (U\rho U^* \cdot d%
\hat{\rho}) =\mathbb{P}^{M_n}_{\rho} (d (U^*\hat{\rho} U)).
\end{equation*}
Recall that the trace-norm distance squared for pure states is given by
$ d_1^2(\rho, \rho^\prime) := \|\rho - \rho^\prime \|_1^2
=4 (1- |\langle \psi |\psi^\prime \rangle|^2)$.
In \cite{Hayashi} it has been
shown that, conditionally on $\tilde{n}$, the risk of the estimator $\hat{\rho}$ with respect to the trace-norm square distance is\footnote{Reference \cite{Hayashi} uses a fidelity distance erroneously called ``Bures distance" , which for pure states coincides with the trace-norm distance up to a constant}
\begin{equation*}
\mathbb{E}^{\tilde{n}} \left[d_1^2(\hat{\rho}_n, \rho^{(n)})\right] = \frac{%
4(d_n-1 )}{d_n + \tilde{n}}.
\end{equation*}
Using the triangle inequality we have $d_1^2(\hat{\rho%
}_n, \rho ) \leq 2 (d_1^2(\hat{\rho}_n, \rho^{(n)}) + d_1^2(\rho,
\rho^{(n)}))$. Since $|\psi^{(n)}\rangle= P_n |\psi\rangle /\sqrt{p_n}$, the
bias term is $d_1^2 (\rho, \rho^{(n)}) = 4(1 - p_n)$, which by %
\eqref{eq.1minuspn} is bounded by $4 n^{-2\alpha/(2\alpha+1)} L$. Therefore
\begin{equation*}
\mathbb{E} \left[d_b^2(\hat{\rho}_n, \rho) \right] \leq 8\mathbb{E} \left[%
\frac{(d_n-1) }{d_n + \tilde{n}}\right] + 8 n^{-2\alpha/(2\alpha+1)} L.
\end{equation*}
For an arbitrary small $\varepsilon >0 $, we have
\begin{equation*}
\mathbb{E} \left[\frac{(d_n-1) }{d_n + \tilde{n}}\right] \leq P \left[ \frac{%
\tilde n}n < 1 - \varepsilon \right] + \mathbb{E} \left[\frac{(d_n-1) }{d_n
+ n \cdot \tilde{n}/n} \cdot I (\frac {\tilde n}n \geq 1- \varepsilon )%
\right] \leq O\left(\frac 1n \right) + C\frac{d_n}n.
\end{equation*}
Putting together the last two upper bounds concludes the proof.

\end{proof}

\begin{proof}[Proof of Theorem~\protect\ref{thm:lbsobolev}]
Let us denote by $R_n^E = \inf _{\widehat \psi_n} \sup_{\psi \in S^\alpha(L)}
\mathbb{E}_\psi \left[ \| \widehat \psi_n - \psi \|_2^2 \right]$ the minimax
risk.

The first step is to reduce the set of states $S^\alpha(L)$ to a finite
hypercube denoted $S_{1:N}^\alpha(L)$ consisting of certain ``truncated''
vectors $|\psi\rangle = \sum_{1 \leq i\leq N} \psi_i |e_i\rangle$ which have
$N\asymp n^{1/(2\alpha+1)}$ non-zero coefficients with respect to the
standard basis. This will provide a lower bound to the minimax risk. The
coefficients are chosen as
\begin{equation*}
\psi_j = \pm \frac {\sigma_j}{\sqrt{n}}, \quad \sigma_j^2 = \lambda(1-
(j/N)^{2\alpha}), \quad j=1,\ldots, N, \text{ for some fixed }\lambda >0
\end{equation*}
and we check that they satisfy the ellipsoid constraint
\begin{eqnarray*}
\sum_{j \geq 1} |\psi_j|^2 j^{2 \alpha} &=& \frac {\lambda}n \sum_{j=1}^N
(j^{2 \alpha}- j^{4 \alpha} N^{-2\alpha}) \leq \frac {N^{2 \alpha +1}}n
\frac{2 \alpha \lambda}{(2\alpha+1)(4\alpha +1)} (1+o(1)) \leq L
\end{eqnarray*}
for an appropriate choice of $\lambda >0$.

Using the factorisation property \eqref{eq.factorisation.Gaussian} we can
identify the corresponding Gaussian states with the $N$-mode state defined
by $|\phi \rangle = \otimes_{j=1}^N |G(\sqrt{n} \psi_j) \rangle $, where the
remaining modes are in the vacuum state and can be ignored.

Thus
\begin{eqnarray*}
R_n^E & \geq & \inf _{\widehat \psi} \sup_{\psi \in S_{1:N}^\alpha(L)}
\mathbb{E}_\psi \left[ \| \widehat \psi - \psi \|_2^2 \right] \\
&=& \inf _{\widehat \psi} \sup_{\psi \in S_{1:N}^\alpha(L)} \mathbb{E}_\psi %
\left[ \sum_{j=1}^N | \widehat \psi_j - \psi_j |^2 \right].
\end{eqnarray*}
The supremum over the finite hypercube $S_{1:N}^\alpha(L)$ is bounded from
below by the average over all its elements. This turns the previous maximal
risk into a Bayesian risk, that we can further bound from below as follows:
\begin{eqnarray}
R_n^E & \geq & \inf _{\widehat \psi} \frac 1{2^N} \sum_{\psi \in
S_{1:N}^\alpha (L)} \sum_{j=1}^N \mathbb{E}_{\psi} \left[| \widehat \psi_j -
\psi_j |^2 \right]  \notag \\
& = & \inf _{\widehat \psi} \sum_{j=1}^N \frac 1{2^N} \sum_{\psi \in
S_{1:N}^\alpha (L)} \mathbb{E}_{\psi} \left[| \widehat \psi_j - \psi_j |^2 %
\right]  \notag \\
& \geq & \sum_{j=1}^N \inf _{\widehat \psi_j} \frac 1{2^N} \sum_{\psi \in
S_{1:N}^\alpha (L) } \mathbb{E}_{\psi} \left[ | \widehat \psi_j - \psi_j |^2 %
\right] .  \label{I1}
\end{eqnarray}
In the second line $\widehat \psi$ is the result of an arbitrary measurement
and estimation procedure of the state $|G(\sqrt{n} \psi)\rangle$. In the
third line each infimum is over procedures for estimating the component $%
\psi_j$ only; since such procedure may not be compatible with a single
measurement, the third line is upper bounded by the second.

\bigskip

The second major step in the proof of the lower bounds is to reduce the risk
over all measurements, to testing two simple hypotheses. Let us bound from
below the term \eqref{I1} for arbitrary fixed $j$ between 1 and $N$:

\begin{equation*}
\frac 1{2^N} \sum_{\psi \in S_{1:N}^\alpha (L) } \mathbb{E}_{\psi} \left[
| \widehat \psi_j - \psi_j |^2 \right]
\end{equation*}
\begin{equation*}
= \frac 12 \left\{ \frac 1{2^{N-1}} \sum_{\psi \in S_{(j +)}^\alpha(L)}
\mathbb{E}_{\psi} \left[ | \widehat \psi_j - \sigma_j/\sqrt{n} |^2 \right]
\right. \left. + \frac 1{2^{N-1}} \sum_{\psi \in S_{(j-)}^\alpha(L)} \mathbb{%
E}_{\psi} \left[ | \widehat \psi_j - (-\sigma_j/\sqrt{n} ) |^2 \right]
\right\}
\end{equation*}
\begin{equation}
= \frac 12 \left\{ \mathbb{E}_{\rho^+_j} \left[ | \widehat \psi_j -
\sigma_j/\sqrt{n} |^2 \right] + \mathbb{E}_{\rho^-_j} \left[ | \widehat
\psi_j - (-\sigma_j/\sqrt{n} ) |^2 \right] \right\} ,
 \label{I2}
\end{equation}

where the sum over $\psi \in S_{(j \pm)}^\alpha(L)$ means that the $j^{th}$
coordinate is fixed to $\pm \sigma_j /\sqrt{n}$ and all $k^{th}$
coordinates, for $k \ne j$, take values in $\{ \sigma_k/\sqrt{n}, - \sigma_k/%
\sqrt{n}\}$. In the third line, we denote by $\rho^\pm_j$ the average state
over states in $S^\alpha_{(j\pm)}(L)$.

Let us define the testing problem of the two hypotheses $H_0: \rho =
\rho^+_j $ against $H_1: \rho = \rho^-_j $. For a given estimator $\widehat
\psi_j$ we construct the test
\begin{equation*}
\Delta = I \left( \left| \widehat \psi_j - \frac{\sigma_j}{\sqrt{n}} \right|
> \left| \widehat \psi_j - (- \frac{\sigma_j}{\sqrt{n}}) \right| \right),
\end{equation*}
and decide $H_1$ or $H_0$, if $\Delta $ equals 1 or 0, respectively. By the
Markov inequality, we get that
\begin{eqnarray*}
\mathbb{E}_{\rho^{\pm}_j } \left[ \left| \widehat \psi_j - (\pm \frac{%
\sigma_j}{\sqrt{n}} ) \right|^2 \right] &\geq &\frac{\sigma_j^2}n \mathbb{P}%
_{\rho^{\pm}_j} \left( \left| \widehat \psi_j - ( \pm \frac{\sigma_j}{\sqrt{n%
}}) \right| \geq \frac {\sigma_j}{\sqrt{n}} \right) .  \label{t0}
\end{eqnarray*}
On the one hand,
\begin{equation}  \label{t1}
\mathbb{P}_{\rho^+_j} \left( | \widehat \psi_j - \sigma_j/\sqrt{n} | \geq
\frac {\sigma_j}{\sqrt{n}} \right) \geq \mathbb{P}_{\rho^+_j}(\Delta = 1).
\end{equation}
Indeed, under $\mathbb{P}_{\rho^+_j}$, the event $\Delta = 1$ implies that $%
|\widehat \psi_j - \frac{\sigma_j}{\sqrt{n}}| > |\widehat \psi_j + \frac{%
\sigma_j}{\sqrt{n}}|$, which further implies by the triangular inequality
that
\begin{equation*}
\left| \widehat \psi_j - \frac{\sigma_j}{\sqrt{n}} \right| \geq \frac{2
\sigma_j}{\sqrt{n}} - \left| \widehat \psi_j + \frac{\sigma_j}{\sqrt{n}}
\right| \geq \frac{ 2 \sigma_j}{\sqrt{n}} - \left| \widehat \psi_j - \frac{%
\sigma_j}{\sqrt{n}} \right| ,
\end{equation*}
giving $| \widehat \psi_j - \psi_j | \geq \frac {\sigma_j}{\sqrt{n}}$. By a
similar reasoning for the $\mathbb{P}_{\rho^-_j}$ distribution we get
\begin{equation}  \label{t2}
\mathbb{P}_{\rho^-_j} \left( | \widehat \psi_j + \sigma_j/\sqrt{n} | \geq
\frac {\sigma_j}{\sqrt{n}} \right) \geq \mathbb{P}_{\rho^-_j}(\Delta = 0).
\end{equation}

By using (\ref{t1}) and (\ref{t2}) in (\ref{I2})
\begin{eqnarray*}
&&\frac 12 \left\{ \mathbb{E}_{\rho^+_j} \left[ \left| \widehat \psi_j -
\sigma_j/\sqrt{n} \right|^2 \right] + \mathbb{E}_{\rho^-_j} \left[ \left|
\widehat \psi_j - (-\sigma_j/\sqrt{n} ) \right|^2 \right] \right\} \\
&\geq &
\frac{\sigma_j^2}{2n} \left( \mathbb{P}_{\rho^+_j}(\Delta = 1) + \mathbb{P}%
_{\rho^-_j}(\Delta = 0) \right).
\end{eqnarray*}
To summarise, we have lower bounded the MSE by the probability of error for
testing between the states $\rho^\pm_j$. At closer inspection, these states
are of the form $|G( \sigma_j )\rangle\langle G( \sigma_j ) | \otimes \rho$
and $|G( - \sigma_j )\rangle\langle G( - \sigma_j ) | \otimes \rho$ where $%
\rho$ is a fixed state obtained by averaging the coherent states of all the
modes except $j$. Recall that the optimal testing error in (\ref{HHbound})
gives a further bound from below
\begin{equation*}
\mathbb{P}_{\rho^+_j}(\Delta = 1) + \mathbb{P}_{\rho^-_j}(\Delta = 0) \geq 1
- \frac 12 \|\rho_j^+ - \rho_j^-\|_1.
\end{equation*}
Moreover, the state $\rho$ can be dropped without changing the optimal
testing error
\begin{equation*}
\|\rho_j^+ - \rho_j^-\|_1 = \| |G(\sigma_j)\rangle \langle G(\sigma_j)|  -
|G(-\sigma_j)\rangle \langle G(-\sigma_j)| \|_1 = 2\sqrt{1 - \exp(- 4
\sigma_j^2)}.
\end{equation*}
We conclude that
\begin{equation*}
\inf _{\widehat \psi_j} \frac 12 \left\{ \mathbb{E}_{\rho^+_j} \left[ \left|
\widehat \psi_j - \sigma_j/\sqrt{n} \right|^2 \right] + \mathbb{E}%
_{\rho^-_j} \left[ \left| \widehat \psi_j - (-\sigma_j/\sqrt{n} ) \right|^2 %
\right] \right\} \geq \frac {\sigma_j^2}{4 n} \cdot \exp(-4 \sigma_j^2)
\end{equation*}
and we further use this in (\ref{I2}) to get
\begin{eqnarray*}
R_n^E &\geq &\sum_{j=1}^N \frac {\sigma_j^2}{4 n} \cdot \exp(-4 \sigma_j^2) \\
& =&
\frac Nn \cdot \frac {\lambda}{4 N} \sum_{j=1}^N \left( 1- (\frac jN)^{2
\alpha} \right)  \exp \left( -4 \cdot \lambda ( 1- (\frac jN)^{2 \alpha} )
\right) \geq  c \frac Nn.
\end{eqnarray*}
Indeed, the average over $j$ is the Riemann sum associated to the integral
of a positive function and can be bounded from below by some constant $c>0$
depending on $\alpha$. Moreover, $N/n \asymp n^{-2 \alpha/(2\alpha+1)}$ and
thus we finish the proof of the theorem.
\end{proof}


\begin{proof}[Proof of Theorem \protect\ref{thm:lbsobolev.iid}]
Let $\tilde{R}_n^E =
\inf _{|\widehat \psi_n\rangle} \sup_{|\psi\rangle \in S^\alpha(L)}
\mathbb{E}_\rho \left[ d (\hat\rho_n , \rho)^2 \right] $ be the minimax risk
for $\mathcal{Q}_n$.

We bound from below the risk by restricting to (pure) states in a neigbourhood $%
\Sigma_n(e_0)$ of the basis vector $|e_0\rangle$ defined as follows. As in %
\eqref{eq.psi-u} we write the state and the estimator in terms of their
corresponding local vectors
\begin{equation*}
|\psi \rangle = \sqrt{1- \|u\|^2} |e_0\rangle + |u\rangle, \qquad |\hat{\psi}
\rangle = \sqrt{1- \|\hat{u}\|^2} |e_0 \rangle + |\hat{u}\rangle, \qquad
|u\rangle, |\hat{u}\rangle \perp |e_0\rangle.
\end{equation*}
Then the neighbourhood is given by $\Sigma_n(e_0) :=\{ |\psi_u\rangle : \|
u\| \leq \gamma_n \}$; we choose  $\gamma_n =o(1) $ with a rate to be determined later. Such states are
described by the local model $\mathcal{Q}_n (e_0, \gamma_n)$, cf. equation %
\eqref{eq.e.n}. The risk is bounded from below by
\begin{equation*}
\tilde{R}_n^E \geq \inf _{|\widehat \psi_n\rangle} \, \sup_{|\psi\rangle \in S^\alpha(L) \cap
\Sigma_n(e_0)} \mathbb{E}_\rho \left[ d (\hat\rho_n , \rho )^2 \right] .
\end{equation*}
By using the triangle inequality we can assume that $\hat{\psi} \in
\Sigma_n(e_0)$, while incurring at most a factor 2 in the risk. By using the
quadratic approximation \eqref{eq.lcoap.approx.bures.normone} we find that
\begin{equation}  \label{eq.d.hilbert.norm}
d^2 (\widehat\rho_n , \rho ) = k \|u-\hat{u}\|^2 + O(\gamma_n^4)
\end{equation}
where $k=1$ or $k= 4$ depending on which distance we use.
At this point we impose a condition on $\gamma_n$:
\begin{equation}\label{new_gamma}
  \gamma_n^4 = o\left(n^{-2\alpha / (2 \alpha +1)} \right).
\end{equation}
Since now $O(\gamma_n^4)$ decreases faster than $n^{-2\alpha/(2\alpha+1)}$, the second term does not
contribute to the asymptotic rate and can be neglected, so that the problem
has been reduced to that of estimating the local parameter $u$ with respect
to the Hilbert space distance. To study the latter, we further restrict the
set of states to a hypercube similar to the one in the proof of Theorem \ref%
{thm:lbsobolev}, consisting of states $|\psi_u\rangle$ with ``truncated''
local vectors $| u\rangle = \sum_{1 \leq i\leq N} u_i |e_i\rangle$ belonging
to $S_{1:N}^\alpha(L)$. As before, there are $N\asymp n^{1/(2\alpha+1)}$
non-zero coefficients of the form
\begin{equation*}
u_j = \pm \frac {\sigma_j}{\sqrt{n}}, \quad \sigma_j^2 = \lambda(1- (j/N)^{2
\alpha}), \quad j=1,\ldots, N.
\end{equation*}
It has been already shown that such vectors belong to the ellipsoid $%
S^\alpha(L)$. Additionally, we show that they also belong to the
local ball $\Sigma_n(e_0)$. Indeed
\begin{align*}
\|u\|^2 = \sum_{j=1}^{N}\left\vert u_{j}\right\vert ^{2} & =\frac{1}{n}%
\sum_{j=1}^{N}\sigma_{j}^{2}=\frac{1}{n}\sum_{j=1}^{N}\lambda \left(
1-\left( j/N\right) ^{2 \alpha }\right)  \notag \\
& = \frac{N}{n} \left( \frac{1}{N}\sum_{j=1}^{N}\lambda \left( 1-\left(
j/N\right) ^{2 \alpha }\right) \right)  \leq C_{1}\frac{N}{n} ,
\end{align*}
where we used that as $N\rightarrow\infty$ the expression between the parentheses tens to a finite integral. 
As $N$ scales as $n^{1/(2\alpha+1)}$, the upper bound becomes
\begin{equation*}
\|e_0 - \psi_u\|^2 \leq C_2 n^{-2\alpha/(2\alpha+1)} = o(\gamma_n^2)
\end{equation*}
provided that $\gamma_n$ fulfills
\begin{equation}\label{new-gamma2}
  n^{-2\alpha/(2\alpha+1)} = o(\gamma_n^2)
\end{equation}
and then the state $|\psi_u\rangle$ belongs to the local ball $\Sigma_n(e_0)$. Taking
into account \eqref{eq.d.hilbert.norm} the risk is therefore lower bounded
as
\begin{equation*}
\tilde{R}_n^E \geq \inf _{\widehat u} \sup_{u \in S_{1:N}^\alpha(L) }
\mathbb{E}_{\rho_u} \left[ \|u-\hat{u}\|^2 \right] + o(n^{-1}).
\end{equation*}
where $\rho_u = |\psi_u\rangle\langle \psi_u|$, and the infimum is now taken over the local component $|\hat{u}\rangle $
of an estimator $|\hat{\psi} \rangle = \sqrt{1- \|\hat{u}\|^2} |e_0 \rangle
+ |\hat{u}\rangle$. Now, if we choose $\gamma _n$ as
$$
\gamma_n = n^{-\alpha/(2 \alpha +1)} \log(n),
$$
then both \eqref{new_gamma} and \eqref{new-gamma2} are fulfilled.

The first term is further lower bounded by passing to
the Bayes risk for the uniform distribution over $S_{1:N}^\alpha(L)$,
similarly to the proof of Theorem \ref{thm:lbsobolev}
\begin{equation*}
\tilde{R}_n^E \geq \sum_{j=1}^N \,\inf _{\widehat u_j} \frac 1{2^N} \sum_{u
\in S_{1:N}^\alpha (L) } \mathbb{E}_{\psi_u} \left[ | \widehat u_j - u_j |^2 %
\right] + o(n^{-1}).
\end{equation*}
By following the same steps we get
\begin{eqnarray}
&& \frac 1{2^N} \sum_{u \in S_{1:N}^\alpha (L) } \mathbb{E}_{\rho_u} \left[ |
\widehat u_j - u_j |^2 \right] \nonumber \\
&=& \frac 12 \left\{ \mathbb{E}_{\tau^+_j} %
\left[ | \widehat \psi_j - \sigma_j/\sqrt{n} |^2 \right] + \mathbb{E}%
_{\tau^-_j} \left[ | \widehat \psi_j - (-\sigma_j/\sqrt{n} ) |^2 \right]
\right\} ,  \notag \\
&\geq& \frac{\sigma_j^2}{2n} \left( \mathbb{P}_{\tau^+_j}(\Delta = 1) +
\mathbb{P}_{\tau-}(\Delta = 0) \right) \geq \frac{\sigma_j^2}{2n} \cdot ( 1
- \frac{1}{2}\|\tau^+_j - \tau^-_j\|_1) ,  \label{eq.lower.bound.x}
\end{eqnarray}
where we denote by $\tau^\pm_j$ the average state over states $%
|\psi_u\rangle \langle \psi_u|^{\otimes n}$ with $u\in S^\alpha_{(j\pm)}(L)$%
, and $\Delta$ is a test for the hypotheses $H_0: \tau = \tau^+_j $ and $%
H_1: \tau = \tau^-_j $. In the last inequality we used the Helstrom bound
\cite{Helstrom} which expresses the optimal average error probability for
two states discrimination in terms of the norm-one distance between states.

We now make use of the local asymptotic equivalence result in Theorem \ref%
{th.qlan}. From \eqref{eq.LAN1} we know that there exist quantum channels $%
S_n$ such that
\begin{equation*}
\delta_n := \max_{u\in S_{1:N}^\alpha (L)} \left\| |\psi_u\rangle\langle
\psi_u |^{\otimes n} - S_n\left( |G(\sqrt{n} u)\rangle\langle G(\sqrt{n} u)
| \right) \right\|_1 \leq \Delta(\mathcal{Q}_n, \mathcal{G}_n)= o(1).
\end{equation*}
By Lemma \ref{lemma.distance.channels} we get
\begin{equation*}
\| \tau^+_j - \tau^-_j\|_1 \leq \|\rho^+_j - \rho^-_j\|_1 + 2\delta_n
\end{equation*}
where $\rho^\pm_j$ are the corresponding mixtures in the Gaussian model as
defined in the proof of Theorem \ref{thm:lbsobolev}.
From \eqref{eq.lower.bound.x} we then get
\begin{equation*}
\frac 1{2^N} \sum_{u \in S_{1:N}^\alpha (L) } \mathbb{E}_{\rho_u} \left[ |
\widehat u_j - u_j |^2 \right] \geq \frac{\sigma_j^2}{2n} \cdot ( 1 - \frac{1%
}{2}\|\rho^+_j - \rho^-_j\|_1 - \delta_n) \geq \frac{\sigma_j^2}{4n} \cdot
(\exp(-4 \sigma_j^2) - 2\delta_n).
\end{equation*}
Indeed, as we have
$$
\| \rho_j^+ - \rho_j^-\|_1 = \| |G(\sigma_j) \rangle \langle G(\sigma_j)| - |G(-\sigma_j) \rangle \langle G(-\sigma_j)| \|_1 = 2 \sqrt{1 - \exp(-4 \sigma_j^2)},
$$
we obtain
\begin{eqnarray*}
 \frac{\sigma_j^2}{2n} \cdot ( 1 - \frac{1}{2}\|\rho^+_j - \rho^-_j\|_1 - \delta_n)
 & \geq & \frac{\sigma_j^2}{2n} \cdot (1 - \sqrt{1-\exp(-4 \sigma_j^2)} - \delta_n)\\
 & = & \frac{\sigma_j^2}{2n} \cdot \left( \frac{\exp(-4 \sigma_j^2)}{1+ \sqrt{1-\exp(-4 \sigma_j^2)}} - \delta_n \right)\\
 & \geq & \frac{\sigma_j^2}{4n} \cdot (\exp(-4 \sigma_j^2) - 2\delta_n).
 \end{eqnarray*}
Now note that
\begin{align*}
  \min_j \exp(-4 \sigma_j^2) & = \exp(-4 \lambda (1-N^{-{2 \alpha}})) \\
   &= \exp(-4 \lambda) (1+ o(1))
\end{align*}
and $\delta_n = o(1)$, so that
$$
\min_j (\exp(-4 \sigma_j^2) - 2\delta_n) \geq C_3 >0
$$
for sufficiently large $n$. Consequently,
\begin{align*}
 \tilde{R}_n^E & \geq C_3 \sum_{j=1}^N \frac{\sigma_j^2}{4n} = \frac{C_3 \lambda}{4} \frac Nn \left(N^{-1} \sum_{j=1}^N (1 - (j/N)^{2 \alpha}) \right)  \\
   & \asymp \frac Nn \asymp n^{-2\alpha/(2 \alpha +1)}.
\end{align*}
\end{proof}

\begin{proof}[Proof of Theorem~\protect\ref{thm:Festim}]
The usual bias-variance decomposition yields
\begin{equation*}
\mathbb{E}_\psi \left( \widehat F_n - F(\psi)\right)^2 = \left( \mathbb{E}_\psi
\widehat F_n - F(\psi)\right)^2  + Var_\psi \left( \widehat F_n\right).
\end{equation*}
The bias can be upper bounded as
\begin{eqnarray*}
&& \left| F(\psi) - \mathbb{E}_\psi \widehat F_n \right| =\left| F(\psi ) -
\sum_{j=1}^N p_j \cdot j^{2\beta} \right| 
\\
&=& \sum_{j\geq N+1} p_j \cdot
j^{2\beta} \leq N^{-2 (\alpha - \beta)}\sum_{j\geq N+1} p_j \cdot
j^{2\alpha}\leq L N^{-2(\alpha - \beta)}.
\end{eqnarray*}
For the variance, let us note that the vector
\begin{equation*}
\widehat V = n \cdot(\hat p_1, \dots , \hat p_N, \hat p^*_{N+1}), \quad
\mathrm{with} \quad \widehat p^*_{N+1} = n^{-1} \sum_{k=1}^n I(X_k \geq
N+1),
\end{equation*}
has a multinomial distribution with parameters $n$ and probability vector $%
V:=(p_1, \dots ,p_N, p^*_{N+1} = \sum_{j \geq N+1} p_j)^\top$. The
covariance matrix of a multinomial vector writes $n\cdot (\mathrm{Diag}(V) -
V \cdot V^\top) $, where $\mathrm{Diag}(V)$ denotes the diagonal matrix with
entries from $V$. In particular, if $\widehat p := (\hat p_1,...,\hat
p_N)^\top$, $p := (p_1,...,p_N)^\top$ and $B := (1, 2^{2 \beta}, ..., N^{2
\beta})^\top$ then
\begin{equation*}
Cov_\psi (\widehat F_n) = Cov_\psi ( B^\top \cdot \widehat p) = B^\top \cdot
Cov_\psi (\widehat p) \cdot B  = \frac 1n \cdot B^\top \cdot (\mathrm{Diag}(
p) - p \cdot p ^\top)\cdot B.
\end{equation*}
This gives
\begin{equation*}
Cov_\psi (\widehat F_n) \leq \frac 1n \cdot B^\top \cdot \mathrm{Diag}(p) \cdot
B = \frac 1n \sum_{j=1}^N p_j \cdot j^{4\beta}.
\end{equation*}
The bound of this last term and the resulting bound of the risk is treated
separately for the two cases.

\textbf{a)} Case $\alpha \geq 2 \beta$. In that case,
\begin{equation*}
\sum_{j=1}^N p_j \cdot j^{4\beta} \leq \sum_{j=1}^N p_j \cdot j^{2\alpha}
\leq L \mbox{ implying that } Var(\widehat F_n) \leq \frac Ln.
\end{equation*}
The upper bound of the risk is, in this case,
\begin{equation*}
\mathbb{E}_\psi \left( \widehat F_n - F(\psi)\right)^2 \leq L^2
N^{-4(\alpha-\beta)} + \frac Ln.
\end{equation*}
If we choose $N\asymp n^{1/(4(\alpha-\beta))}$ or larger, then the
parametric rate is attained for the risk:
\begin{equation*}
\mathbb{E}_\psi \left( \widehat F_n - F(\psi)\right)^2 = O(1) \cdot n^{-1}.
\end{equation*}

\textbf{b)} Case $\beta < \alpha < 2 \beta$. Here we have,
\begin{equation*}
Cov_\psi ( \widehat F_n) \leq \frac 1n \sum_{j=1}^N p_j \cdot j^{4\beta} \leq
\frac 1n \sum_{j=1}^N p_j \cdot j^{4\beta-2 \alpha} j^{2 \alpha} p_j \leq
\frac {N^{4 \beta - 2\alpha}}n L.
\end{equation*}
The upper bound of the risk becomes
\begin{equation*}
\mathbb{E}_\psi \left( \widehat F_n - F(\psi)\right)^2 \leq L^2
N^{-4(\alpha-\beta)} + \frac {N^{4 \beta - 2\alpha}}n L.
\end{equation*}
The optimal choice of the parameter $N$ that balances the two previous terms
is $N \asymp n^{1/(2 \alpha)}$, giving the attainable rate for the quadratic
risk
\begin{equation*}
\mathbb{E}_\psi \left( \widehat F_n - F(\psi)\right)^2 = O(1)\cdot n^{-2 ( 1 -
\beta / \alpha )}.
\end{equation*}
Cases \textbf{a)} and \textbf{b)} together prove that the rate $\eta_n ^2$
is attainable.
\end{proof}

\begin{proof}
[Proof of Theorem \ref{thm:Flowbounds}]Denote by 
$$
R_{n}^{F} = \inf_{\widehat
F_{n}} \sup_{\psi\in S^{\alpha}(L)} \eta_{n}^{-2} \cdot\mathbb{E}_{\psi
}\left(  \widehat F_{n} - F(\psi) \right)  ^{2}
$$ the minimax risk.

The case \textbf{a)} where $\alpha\geq2 \beta$ reduces to the Cram\'er-Rao
bound that proves that the parametric rate $1/n$ is always a lower bound for
the mean square error for estimating $F(\psi)$.

We prove that in the case \textbf{b)} where $\beta< \alpha< 2 \beta$, this
bound from below increases to $n^{-2(1-\beta/\alpha)}$ (up to constants). By
the Markov inequality,
\begin{equation}
\label{IF1}\eta_{n}^{-2} \cdot\mathbb{E}_{\psi}\left(  \widehat F_{n} -
F(\psi) \right)  ^{2} \geq\frac14 \cdot\mathbb{P}_{\psi}\left(  | \widehat
F_{n} - F(\psi)| \geq\frac{\eta_{n}}2 \right)  .
\end{equation}

Let us restrict the set of pure states $S^{\alpha}(L)$ to its intersection
with the local model $\mathcal{Q}_{n}(e_{0},\gamma_{n})$ (see equation
\eqref{eq.e.n}) where $|\psi_{u}\rangle=\sqrt{1-\Vert u\Vert^{2}}\cdot
|e_{0}\rangle+|u\rangle$ is such that $\Vert u\Vert\leq\gamma_{n}$, with
$\gamma_{n}=(\log n)^{-1}$. In other words, $u$ belongs to the set
\[
s^{\alpha}(L,\gamma_{n})=\left\{  u\in\ell_{2}(\mathbb{N}^{\ast}):\sum
_{j\geq1}|u_{j}|^{2}j^{2\alpha}\leq L\mbox{ and }\Vert u\Vert\leq\gamma
_{n}\right\}  .
\]
Using the fact that $F(e_{0})=0$, we have
\begin{align}
&  \sup_{\psi\in S^{\alpha}(L)}\frac{1}{4}\cdot\mathbb{P}_{\psi}\left(
|\widehat{F}_{n}-F(\psi)|\geq\frac{\eta_{n}}{2}\right) \nonumber\\
&  \geq\frac{1}{4}\max\left\{  \mathbb{P}_{e_{0}}\left(  |\widehat{F}_{n}%
|\geq\frac{\eta_{n}}{2}\right)  ,\sup_{u\in s^{\alpha}(L,\gamma_{n}%
),F(\psi_{u})\geq\eta_{n}}\mathbb{P}_{\psi_{u}}\left(  |\widehat{F}_{n}%
-F(\psi_{u})|\geq\frac{\eta_{n}}{2}\right)  \right\} \nonumber\\
&  \geq\frac{1}{8}\left\{  \mathbb{P}_{e_{0}}\left(  |\widehat{F}_{n}%
|\geq\frac{\eta_{n}}{2}\right)  +\sup_{u\in s^{\alpha}(L,\gamma_{n}%
),F(\psi_{u})\geq\eta_{n}}\mathbb{P}_{\psi_{u}}\left(  |\widehat{F}_{n}%
-F(\psi_{u})|\geq\frac{\eta_{n}}{2}\right)  \right\} \nonumber\\
&  \geq\frac{1}{8}\left\{  \mathbb{P}_{e_{0}}\left(  |\widehat{F}_{n}%
|\geq\frac{\eta_{n}}{2}\right)  +\sup_{u\in s^{\alpha}(L,\gamma_{n}%
),F(\psi_{u})\geq\eta_{n}}\mathbb{P}_{\psi_{u}}\left(  |\widehat{F}_{n}%
|<\frac{\eta_{n}}{2}\right)  \right\}  \label{eq.lb.r}%
\end{align}
where in the last inequality we used that $|\widehat{F}_{n}|<\eta_{n}/2$ and
$F(\psi_{u})\geq\eta_{n}$ imply $|\widehat{F}_{n}-F(\psi_{u})|\geq\eta_{n}%
/2$.
Note also that $F(\psi_{u})=F\left(  u\right)  $ for $\left\vert
u\right\rangle \in\mathcal{H}_{0}$; we now consider the testing problem with
hypotheses
\begin{equation}
\left\{
\begin{array}
[c]{ll}%
H_{0}: & |u\rangle=|0\rangle\\
H_{1}(\alpha,L,\gamma_{n},\eta_{n}): & |u\rangle,\text{ with }u\in s^{\alpha
}(L,\gamma_{n})\text{ and }F(u)\geq\eta_{n}.
\end{array}
\right.  \label{eq.test-for-quad}%
\end{equation}
Let $\Delta=\Delta(\eta_{n})=I(|\widehat{F}_{n}|\geq\eta_{n}/2)$ be the test
that accepts the null hypothesis when $\Delta=0$ and rejects the null
hypothesis when $\Delta=1$. Then the right-hand side of \eqref{eq.lb.r} is
lower bounded by the sum of the error probability of type I and of the maximal
error probability of type II of $\Delta$. We can describe $\Delta$ as a binary
POVM $M=(M_{0},M_{1})$, depending on $\eta_{n}$: $M(\eta_{n})=(M_{0}(\eta
_{n}),M_{1}(\eta_{n}))$. Thus,
\begin{equation}
\mathbb{P}_{e_{0}}\left(  |\widehat{F}_{n}|\geq\frac{\eta_{n}}{2}\right)
=\mathrm{Tr}(|e_{0}\rangle\langle e_{0}|^{\otimes n}\cdot M_{1})
\end{equation}
and
\begin{equation}
\mathbb{P}_{\psi_{u}}\left(  |\widehat{F}_{n}|<\frac{\eta_{n}}{2}\right)
=\mathrm{Tr}(|\psi_{u}\rangle\langle\psi_{u}|^{\otimes n}\cdot M_{0}).
\label{IF4}%
\end{equation}
By putting together \eqref{IF1}-\eqref{IF4}, we get that the minimax risk has
the lower bound
\[
R_{n}^{F}\geq\frac{1}{8}\inf_{M}\left(  \langle e_{0}^{\otimes n}|M_{1}%
|e_{0}^{\otimes n}\rangle+\underset{u\in s^{\alpha}(L,\gamma_{n}),\newline
F(u)\geq\eta_{n}}{\sup}\langle\psi_{u}^{\otimes n}|M_{0}|\psi_{u}^{\otimes
n}\rangle\right)  .
\]

Now, using the local asymptotic equivalence Theorem \ref{th.qlan} with respect
to the state $|\psi_{0}\rangle:=|e_{0}\rangle$ we map the i.i.d. ensemble
$|\psi_{u}\rangle^{\otimes n}$ to the Gaussian state $|G(u)\rangle
\in\mathcal{F}(\mathcal{H}_{0})$. The lower bound becomes
\begin{equation}
R_{n}^{F}\geq\frac{1}{8}\inf_{M}\left(  \langle0|M_{1}|0\rangle+\underset{u\in
s^{\alpha}(L,\gamma_{n}),\newline F(u)\geq\eta_{n}}{\sup}\langle G(\sqrt
{n}u)|M_{0}|G(\sqrt{n}u)\rangle\right)  +o(1) \label{IF5a}%
\end{equation}
where the infimum is taken over tests $M=(M_{0},M_{1})$ and the $o(1)$ terms
stems from the vanishing Le Cam distance $\Delta(\mathcal{Q}_{n}(e_{0}%
,\gamma_{n}),\mathcal{G}_{n}(e_{0},\gamma_{n}))$. The lower bound has been
transformed into a testing problem for the Gaussian model.

In order to bound from below the maximal error probability of type II, we
define a prior distribution on the set of alternatives and average over the
whole set with respect to this a priori distribution. Similarly to the
classical proofs of lower bounds, our construction will lead to a test of
simple hypotheses: the former null and the constructed averaged state. Assume
that $\{u_{j}\}_{j\geq1}$ are all independently distributed, such that $u_{j}$
has a complex (bivariate) Gaussian distribution $N_{2}(0,\frac{1}{2}\sigma
_{j}^{2}\cdot I_{2})$ for all $j$ from 1 to $N$, and that $u_{j}=0$ for all
$j>N$, where $I_{2}$ is the $2\times2$ identity matrix. The $\sigma_{j}^{2}$
are defined as
\begin{equation}
\sigma_{j}^{2}=\lambda\left(  1-\left(  \frac{j}{N}\right)  ^{2\alpha}\right)
_{+}, \label{def:opt-1}%
\end{equation}
where $\lambda,N>0$ are selected such that
\begin{equation}
\text{ }\sum_{j\geq1}j^{2\alpha}\sigma_{j}^{2}=L(1-\varepsilon)\text{ and
}\sum_{j\geq1}j^{2\beta}\sigma_{j}^{2}=n^{-1+\beta/\alpha}(1+\varepsilon),
\label{def:opt-2}%
\end{equation}
for an arbitrary $\varepsilon>0$. Let us denote by $\Pi$ the joint prior
distribution of $\{u_{j}\}_{j\geq1}$.

Such a choice of the prior distribution was first introduced in
\cite{Ermakov} for establishing sharp minimax risk bounds for
nonparametric testing in the Gaussian white noise model. This construction
represents an analog of the prior distribution used in Pinsker's theory for
sharp estimation of functions. In our case, using a Gaussian prior as an
alternative hypothesis leads to the well-known Gaussian thermal state.

The essence of this construction is that the random vectors $u=\{u_{j}%
\}_{j\geq1}$ concentrate asymptotically, with probability tending to 1, on the
spherical segment
\[
\{u\in\ell_{2}(\mathbb{N}):C\,n^{-1}\leq\Vert u\Vert^{2}\leq C\,n^{-1}%
(1+2\varepsilon^{\prime})\},
\]
for $\varepsilon^{\prime}>0$ depending on $\varepsilon$ and some constant
$C>0$ depending on $\alpha$ and $\beta$ described later on, and on the
alternative set of hypothesis, $H_{1}(\alpha,L,\gamma_{n},\eta_{n})$. Note
that the spherical segment is included in the set $\Vert u\Vert\leq\gamma_{n}%
$, as $\gamma_{n}=(\log n)^{-1}\gg n^{-1/2}$. The asymptotic concentration
is proved by the following lemma.

\begin{lemma}
\label{lemma:prior} A unique solution $\left(  \lambda,N\right)  $ of
(\ref{def:opt-1}), (\ref{def:opt-2}), exists for $n$ large enough and admits
an asymptotic expansion with respect to $n$
\begin{align*}
\lambda\sim n^{-1-1/2\alpha}C_{\lambda}\frac{(1+\varepsilon)^{(\alpha
+1/2)/(\alpha-\beta)}}{(1-\varepsilon)^{(\beta+1/2)/(\alpha-\beta)}}  &
\text{, }C_{\lambda}=\frac{((2\beta+1)(2\beta+2\alpha+1))^{(\alpha
+1/2)/(\alpha-\beta)}}{2\alpha(L(2\alpha+1)(4\alpha+1))^{(\beta+1/2)/(\alpha
-\beta)}}\nonumber\\
N\sim n^{1/2\alpha}C_{N}\left(  \frac{1-\varepsilon}{1+\varepsilon}\right)
^{1/(2(\alpha-\beta))}  &  \text{, }C_{N}=\left(  \frac{L(2\alpha
+1)(4\alpha+1)}{(2\beta+1)(2\beta+2\alpha+1)}\right)  ^{1/(2(\alpha-\beta))}.
\nonumber\\
\end{align*}
The independent complex Gaussian random variables $u_{j}\sim N_{2}(0,\frac
{1}{2}\sigma_{j}^{2}I_{2})$, with $\sigma_{j}$'s and $\left(  \lambda
,N\right)  $ given in (\ref{def:opt-1}), (\ref{def:opt-2}), are such that, for
an arbitrary $\varepsilon>0$,
\begin{align}
\mathbb{P}\left(  C\,n^{-1}\leq\sum_{j=1}^{N}\left\vert u_{j}\right\vert
^{2}\leq C\,n^{-1}(1+2\varepsilon^{\prime})\right)   &  \rightarrow
1,\label{IN1}\\
\mathbb{P}\left(  \sum_{j=1}^{N}j^{2\alpha}\left\vert u_{j}\right\vert
^{2}\leq L\right)   &  \rightarrow1,\label{IN2}\\
\mathbb{P}\left(  \sum_{j=1}^{N}j^{2\beta}\left\vert u_{j}\right\vert ^{2}\geq
n^{-1+\beta/\alpha}\right)   &  \rightarrow1, \label{IN3}%
\end{align}
where $C=C_{\lambda}\cdot C_{N}\cdot2\alpha/(2\alpha+1)$ is a positive
constant depending on $\alpha$ and $\beta$, and $\varepsilon^{\prime}>0$
depends only on $\varepsilon$.
\end{lemma}

\begin{proof}
[Proof of Lemma~\ref{lemma:prior}]The solution of the problem (\ref{def:opt-1}%
), (\ref{def:opt-2}) can be found in \cite{Ermakov} (see also
\cite{JiNussbaum}, Lemma A.1 ) for $\beta=0$; a similar reasoning applies here.
Let us prove that the random variables $\{u_{j}\}_{j=1,...,N}$ satisfy
(\ref{IN1}) to (\ref{IN3}). We have
\begin{align}
\sum_{j=1}^{N}\sigma_{j}^{2}  &  =\lambda\sum_{j=1}^{N}\left(  1-\left(
\frac{j}{N}\right)  ^{2\alpha}\right)  \sim\lambda N\frac{2\alpha}{2\alpha
+1}\nonumber\\
&  \sim C_{\lambda}C_{N}\frac{2\alpha}{2\alpha+1}n^{-1}(1+\varepsilon
)^{\alpha/(\alpha-\beta)}(1-\varepsilon)^{-\beta/(\alpha-\beta)}%
=C\,n^{-1}(1+\varepsilon^{\prime}), \label{sumsigma2}%
\end{align}
where we denote $\varepsilon^{\prime}=(1+\varepsilon)^{\alpha/(\alpha-\beta
)}(1-\varepsilon)^{-\beta/(\alpha-\beta)}-1$ which is positive for all
$\varepsilon\in\left(  0,1\right)  $.

Note that $E\left\vert u_{j}\right\vert ^{2}=\sigma_{j}^{2}$ and $Var\left(
\left\vert u_{j}\right\vert ^{2}\right)  =\sigma_{j}^{4}$. We have
\begin{eqnarray*}
&&\mathbb{P}\left(  C\,n^{-1}\leq\sum_{j=1}^{N}\left\vert u_{j}\right\vert
^{2}\leq C\,n^{-1}(1+2\varepsilon^{\prime})\right) \\
& =&1-\mathbb{P}\left(
\sum_{j=1}^{N}\left\vert u_{j}\right\vert ^{2}<C\,n^{-1}\right)
-\mathbb{P}\left(  \left\vert u_{j}\right\vert ^{2}>C\,n^{-1}(1+2\varepsilon
^{\prime})\right)  .
\end{eqnarray*}
Now, by the Markov inequality,
\begin{align*}
\mathbb{P}\left(  \sum_{j=1}^{N}\left\vert u_{j}\right\vert ^{2}%
<C\,n^{-1}\right)   &  =\mathbb{P}\left(  \sum_{j=1}^{N}(\left\vert
u_{j}\right\vert ^{2}-\sigma_{j}^{2})<C\,n^{-1}-C\,n^{-1}(1+\varepsilon
^{\prime}+o(1))\right) \\
&  \leq\mathbb{P}\left(  \sum_{j=1}^{N}(\sigma_{j}^{2}-\left\vert
u_{j}\right\vert ^{2})>C\,n^{-1}(\varepsilon^{\prime}+o(1))\right) \\
&  \leq\frac{\sum_{j=1}^{N}Var(\left\vert u_{j}\right\vert ^{2})}%
{C^{2}\,n^{-2}\varepsilon^{\prime2}/2}\leq\frac{2\sum_{j=1}^{N}\sigma_{j}^{4}%
}{C^{2}\,n^{-2}\varepsilon^{\prime2}}\\
&  \asymp\frac{\lambda^{2}N}{C^{2}\,n^{-2}\varepsilon^{\prime2}}\asymp
n^{-1/2\alpha}=o(1).
\end{align*}
Moreover,
\[
\mathbb{P}\left(  \sum_{j=1}^{N}\left\vert u_{j}\right\vert ^{2}%
>C\,n^{-1}(1+2\varepsilon^{\prime})\right)  =\mathbb{P}\left(  \sum_{j=1}%
^{N}(\left\vert u_{j}\right\vert ^{2}-\sigma_{j}^{2})>C\,n^{-1}(\varepsilon
^{\prime}+o(1))\right),
\]
which is an $o(1)$ and this finishes the proof of (\ref{IN1}).

Also, in view of (\ref{def:opt-2}), we have
\begin{align*}
\mathbb{P}\left(  \sum_{j=1}^{N}j^{2\alpha}\left\vert u_{j}\right\vert
^{2}>L\right)   &  =\mathbb{P}\left(  \sum_{j=1}^{N}j^{2\alpha}(\left\vert
u_{j}\right\vert ^{2}-\sigma_{j}^{2})>L\,\varepsilon\right) \\
&  \leq\frac{\sum_{j=1}^{N}j^{4\alpha}Var\left(  \left\vert u_{j}\right\vert
^{2}\right)  }{L^{2}\,\varepsilon^{2}}=\frac{\sum_{j=1}^{N}j^{4\alpha}%
\sigma_{j}^{4}}{L^{2}\,\varepsilon^{2}}\\
&  \asymp\frac{\lambda^{2}N^{4\alpha+1}}{L^{2}\,\varepsilon^{2}}\asymp
n^{-1/2\alpha}=o(1),
\end{align*}
proving (\ref{IN2}). Also,
\begin{align*}
\mathbb{P}\left(  \sum_{j=1}^{N}j^{2\beta}\left\vert u_{j}\right\vert
^{2}<n^{-1+\beta/\alpha}\right)   &  \leq\mathbb{P}\left(  \sum_{j=1}%
^{N}j^{2\beta}(\left\vert u_{j}\right\vert ^{2}-\sigma_{j}^{2})<-n^{-1+\beta
/\alpha}\varepsilon\right) \\
&  \leq\frac{\sum_{j=1}^{N}j^{4\beta}Var(\left\vert u_{j}\right\vert ^{2}%
)}{n^{-2+2\beta/\alpha}\,\varepsilon^{2}}=\frac{\sum_{j=1}^{N}j^{4\beta}%
\sigma_{j}^{4}}{n^{-2+2\beta/\alpha}\,\varepsilon^{2}}\\
&  \asymp\frac{\lambda^{2}N^{4\beta+1}}{n^{-2+2\beta/\alpha}\,\varepsilon^{2}%
}\asymp n^{-1/2\alpha}=o(1),
\end{align*}
proving (\ref{IN3}).
\end{proof}

Let us go back to (\ref{IF5a}) and bound from below the maximal error
probability of type II by the averaged risk, with respect to our prior measure
$\Pi$:
\begin{align*}
&  \underset{u\in s^{\alpha}(L),F(u)\geq\eta_{n}}{\sup}\langle G(\sqrt
{n}\,u)|M_{0}|G(\sqrt{n}\,u)\rangle\\
& \geq\int_{H_{1}(\alpha,L,\gamma_{n}%
,\eta_{n})}\mathrm{Tr}(|G(\sqrt{n}\,u)\rangle\langle G(\sqrt{n}\,u)|\cdot
M_{0})\Pi(du)\\
&  =\mathrm{Tr}\left(  \int|G(\sqrt{n}\,u)\rangle\langle G(\sqrt{n}%
\,u)|\Pi(du)\cdot M_{0}\right) \\
& -\int_{H_{1}(\alpha,L,\gamma_{n},\eta_{n}%
)^{C}}\mathrm{Tr}(|G(\sqrt{n}\,u)\rangle\langle G(\sqrt{n}\,u)|\cdot M_{0}%
)\Pi(du)\\
&  \geq\mathrm{Tr}\left(  \int|G(\sqrt{n}\,u)\rangle\langle G(\sqrt{n}%
\,u)|\Pi(du)\cdot M_{0}\right)  -\Pi(H_{1}(\alpha,L,\gamma_{n},\eta_{n})^{C}).
\end{align*}
In the last inequality we used that $\mathrm{Tr}(|G(\sqrt{n}\,u)\rangle\langle
G(\sqrt{n}\,u)|\cdot M_{0})\leq1$. By Lemma~\ref{lemma:prior}, $\Pi
(H_{1}(\alpha,L,\gamma_{n},\eta_{n})^{C})=o(1)$ and thus we deduce from
(\ref{IF5a}) that
\begin{align*}
&R_{n}^{F} \geq \\
&\frac{1}{8}\inf_{M} \left(  \mathrm{Tr}\left(  |G(0)\rangle\langle
G(0)|\cdot M_{1}\right) +\mathrm{Tr}\left(  \int |G(\sqrt{n}\,u)\rangle\langle
G(\sqrt{n}\,u)|\Pi(du)\cdot M_{0}\right)  \right) \\
& +o(1).
\end{align*}
We recognize in the previous line the sum of error probabilities of type I and
II for testing two simple quantum hypotheses, i.e. the underlying state is
either $|G(0)\rangle$ or the mixed state
\[
\Phi:=\int|G(\sqrt{n}\,u)\rangle\langle G(\sqrt{n}\,u)|\Pi(du).
\]
As a last step of the proof, we characterize more precisely the previous mixed
Gaussian state as a thermal state and use classical results from quantum
testing of two simple hypotheses to give the bound from below of the testing
risk. Recall from Section \ref{sec.Fock.spaces}, equation
\eqref{eq.factorisation.Gaussian} that coherent states $|G(\sqrt{n}%
\,u)\rangle$ factorize as tensor product of one-mode coherent states with
displacements $u_{j}$, i.e. $\otimes_{j\geq1}|G(\sqrt{n}u_{j})\rangle$.
A coherent state with displacement $z=x+iy$ with $x,\,y\in\mathbb{R}$ is fully
characterized by its Wigner function given by equation
\eqref{eq.wigner.coherent}.
Since the prior is Gaussian, our mixed state $\Phi$ is Gaussian and can be
written
\begin{align*}
& \int|G(\sqrt{n}\,u)\rangle\langle G(\sqrt{n}\,u)|\Pi(du) \\
 &  =\left(
\bigotimes_{j=1}^{N}\int|G(\sqrt{n}\,u_{j})\rangle\langle G(\sqrt{n}%
\,u_{j})|\Pi_{j}(du_{j})\right)  \otimes\left(  \bigotimes_{j\geq
N+1}|0\rangle\langle0|\right) \\
&  :=\bigotimes_{j=1}^{N}\Phi_{j}\otimes\left(  \bigotimes_{j\geq
N+1}|0\rangle\langle0|\right)
\end{align*}
where $\Pi_{j}$ represents the bivariate centred Gaussian distribution with
covariance matrix $\sigma_{j}^{2}/2\cdot I_{2}$ over the complex plane
$u_{j}=x_{j}+iy_{j}$. Using equation \eqref{eq.twirling}, and setting
$\sigma^{2}=n\sigma_{j}^{2}/2$ there, we find that the individual modes with
index $j\leq N$ are centred Gaussian thermal states $\Phi_{j}=\Phi(r_{j})$
(cf. definition \eqref{eq.thermal.state}) with $r_{j}=n\sigma_{j}^{2}%
/(n\sigma_{j}^{2}+1)$.

In order to bound from below the right-hand side term in (\ref{IF5a}) we use
the theory of quantum testing of two simple hypotheses
\[
H_{0}:\otimes_{j\geq1}\Phi(0)\quad\text{against}\quad H_{1}:\otimes_{j=1}%
^{N}\Phi(r_{j})\otimes_{j\geq N+1}\Phi(0).
\]
Using (\ref{HHbound}), it is easy to see that this testing problem is
equivalent to
\[
H_{0}:\left(  \Phi(0)\right)  ^{\otimes N}\quad\text{against}\quad
H_{1}:\otimes_{j=1}^{N}\Phi(r_{j}).
\]
As the vacuum and the thermal state are both diagonalized by the Fock basis,
they commute, which reduces the problem to a classical test between the
$N$-fold products of discrete distributions $H_{0}:\{\mathcal{G}(0)\}^{\otimes
N}$ and $H_{1}:\{\otimes_{j=1}^{N}\mathcal{G}(r_{j})\}$. In view of the form
(\ref{eq.thermal.state}) of the thermal state, $\mathcal{G}(r_{j})$ is the
geometric distribution $\left\{  (1-r_{j})r_{j}^{k}\right\}  _{k=0}^{\infty}$
and $\mathcal{G}(0)$ is the degenerate distribution concentrated at $0$. The
optimal testing error is given by the maximum likelihood test which decides
$H_{0}$ if and only if all observations are $0$. The type I error is 0 and the
type II error is
\[
\prod_{j=1}^{N}(1-r_{j})=\prod_{j=1}^{N}\frac{1}{n\sigma_{j}^{2}+1}\geq
\exp\left(  -n\sum_{j=1}^{N}\sigma_{j}^{2}\right)  \geq\exp(-c),
\]
for some $c>0$, where in the last inequality we used (\ref{sumsigma2}). Using
this in (\ref{IF5a}), we get as a lower bound
\[
R_{n}^{F}\geq\exp(-c)+o(1)\geq c_{0},
\]
where $c_{0}>0$ is some constant depending on $c$. This finishes the proof.
\end{proof}


\begin{proof}[Proof of Theorem~\ref{thm:Test-1}]
Let $\varphi_{n}=c_{n}n^{-1/2}$ for a positive sequence $c_{n}$. Let
$M_{n}=(\rho_{0}^{\otimes n},I-\rho_{0}^{\otimes n})$ be the well-known
projection test for the problem (\ref{test-a}). Then%
\begin{align*}
R_{n}^{T}(M_{n})  &  =\mathrm{Tr}(\rho^{\otimes n}\cdot\rho_{0}^{\otimes
n})+\mathrm{Tr}(\rho_{0}^{\otimes n}\cdot(I-\rho_{0}^{\otimes n}))\\
&  =\left(  \mathrm{Tr}(\rho\cdot\rho_{0})\right)  ^{n}=|\langle\psi|\psi
_{0}\rangle|^{2n}.
\end{align*}
Let us recall that for any pure states $\rho=|\psi\rangle\langle\psi|$ and
$\rho_{0}=|\psi_{0}\rangle\langle\psi_{0}|$, we have
\begin{equation}
\Vert\rho-\rho_{0}\Vert_{1}=2\sqrt{1-|\langle\psi|\psi_{0}\rangle|^{2}}\text{
}, \label{trace-dist-pure}%
\end{equation}
thus $|\langle\psi|\psi_{0}\rangle|^{2}=1-\frac{1}{4}\Vert\rho-\rho_{0}%
\Vert_{1}^{2}$ and hence
\[
R_{n}^T(M_{n})=\left(  1-\frac{1}{4}\Vert\rho-\rho_{0}\Vert_{1}^{2}\right)
^{n}.
\]
For any $\rho$ satisfying the alternative hypothesis $H_{1}(\varphi_{n})$, we
have $\Vert\rho-\rho_{0}\Vert_{1}\geq\varphi_{n}$ and consequently
\begin{align}
\mathbb{P}_{e}^{M_{n}}\left(  \varphi_{n}\right)   &  \leq\left(  1-\frac
{1}{4}\varphi_{n}^{2}\right)  ^{n}=\left(  1-\frac{c_{n}^{2}}{4}n^{-1}\right)
^{n}\nonumber\\
&  \leq\left(  \exp\left(  -\frac{c_{n}^{2}}{4}n^{-1}\right)  \right)
^{n}=\exp\left(  -\frac{c_{n}^{2}}{4}\right)  .
\nonumber
\end{align}
If now $\varphi_{n}/\varphi_{n}^{\ast}\rightarrow\infty$ then $c_{n}%
\rightarrow\infty$ and $\mathbb{P}_{e}^{M_{n}}\left(  \varphi_{n}\right)
\rightarrow0$, so that the second relation in (\ref{sep-rate-def}) is fulfilled.

Consider now the case $\varphi_{n}/\varphi_{n}^{\ast}\rightarrow0$ so that
$c_{n}\rightarrow0$. For any vector $v\in\mathcal{H}$ define
\begin{equation}
\left\Vert v\right\Vert _{\alpha}^{2}=\sum_{j=0}^{\infty}\left\vert
\left\langle e_{j}|v\right\rangle \right\vert ^{2}j^{2\alpha};
\label{alpha-seminorm-def}%
\end{equation}
then $\left\Vert v\right\Vert _{\alpha}$ is a seminorm on the space of $v$
fulfilling $\left\Vert v\right\Vert _{\alpha}^{2}<\infty$. The assumption that
$\rho_{0}=|\psi_{0}\rangle\langle\psi_{0}|\in S^{\alpha}\left(  L^{\prime
}\right)  $ means that $\left\Vert \psi_{0}\right\Vert _{\alpha}^{2}\leq
L^{\prime}<L$. For some $N>0$, consider the linear space%
\[
\mathcal{H}_{0,N}=\left\{  u\in\mathcal{H}:\left\langle u|\psi_{0}%
\right\rangle =0,\left\langle u|e_{j}\right\rangle =0,j>N\right\}  ;
\]
it is nonempty if $N\geq1$. Let $u\in\mathcal{H}_{0,N}$ $,$ $\left\Vert
u\right\Vert =1$ be an unit vector; and for $\varepsilon>0$ consider%
\begin{equation}
\psi_{u,\varepsilon}=\psi_{0}\sqrt{1-\varepsilon^{2}}+\varepsilon u.
\label{perturb-1}%
\end{equation}
Then $\left\Vert \psi_{u,\varepsilon}\right\Vert =1$, $\rho_{u,\varepsilon
}=|\psi_{u,\varepsilon}\rangle\langle\psi_{u,\varepsilon}|$ is a pure state,
and
\[
|\langle\psi_{u,\varepsilon}|\psi_{0}\rangle|^{2}=1-\varepsilon^{2}.
\]
According to (\ref{trace-dist-pure}) we then have
\[
\left\Vert \rho_{u,\varepsilon}-\rho_{0}\right\Vert _{1}=2\sqrt{1-|\langle
\psi_{u,\varepsilon}|\psi_{0}\rangle|^{2}}=2\varepsilon
\]
so for a choice $\varepsilon=c_{n}n^{-1/2}/2$ it follows $\left\Vert
\rho_{u,\varepsilon}-\rho_{0}\right\Vert _{1}=\varphi_{n}$ and $\rho
_{u,\varepsilon}\in B\left(  \varphi_{n}\right)  $. On the other hand, by
(\ref{perturb-1}) and the triangle inequality
\[
\left\Vert \psi_{u,\varepsilon}\right\Vert _{\alpha}\leq\sqrt{1-\varepsilon
^{2}}\left\Vert \psi_{0}\right\Vert _{\alpha}+\varepsilon\left\Vert
u\right\Vert _{\alpha}.
\]
Now $\left\Vert u\right\Vert _{\alpha}<\infty$ for $u\in\mathcal{H}_{0,N}$,
and by assumption $\left\Vert \psi_{0}\right\Vert _{\alpha}<L^{1/2}$, so for
sufficiently large $n$
\[
\left\Vert \psi_{u,\varepsilon}\right\Vert _{\alpha}\leq L^{1/2}%
\]
and thus $\rho_{u,\varepsilon}\in S^{\alpha}\left(  L\right)  $. Thus
$\rho_{u,\varepsilon}\in S^{\alpha}\left(  L\right)  \cap B\left(  \varphi
_{n}\right)  $ for sufficiently large $n$. By\textbf{ (}\ref{HHbound}) the
optimal error probability for testing between states $\rho_{u,\varepsilon}$
and $\rho_{0}$ fulfills
\[
\inf_{M \text{ binary POVM}}R_{n}^{T}(\rho_{0}^{\otimes n},\rho_{u,\varepsilon
}^{\otimes n}, M)=1-\frac{1}{2}\left\Vert \rho_{0}^{\otimes n}-\rho
_{u,\varepsilon}^{\otimes n}\right\Vert _{1}%
\]%
\begin{align}
&  =1-\sqrt{1-|\langle\psi_{0}^{\otimes n}|\psi_{u,\varepsilon}^{\otimes
n}\rangle|^{2}}=1-\sqrt{1-|\langle\psi_{0}|\psi_{u,\varepsilon}\rangle|^{2n}%
}\nonumber\\
&  =1-\sqrt{1-\left(  1-\varepsilon^{2}\right)  ^{n}}=1-\sqrt{1-\left(
1-c_{n}^{2}n^{-1}/4\right)  ^{n}}. \nonumber 
\end{align}
Obviously if $c_{n}^{2}\rightarrow0$ then $\left(  1-c_{n}^{2}n^{-1}/4\right)
^{n}\rightarrow1$ so that
\[
\inf_{M \text{ binary POVM}}R_{n}^{T}(\rho_{0}^{\otimes n},\rho_{u,\varepsilon
}^{\otimes n},M )\geq1+o\left(  1\right)  .
\]
But since $\rho_{u,\varepsilon}\in S^{\alpha}\left(  L\right)  \cap B\left(
\varphi_{n}\right)  $ we have
\[
\mathbb{P}_{e}^{\ast}\left(  \varphi_{n}\right)  \geq\inf_{M \text{ binary
POVM}}R_{n}^{T}(\rho_{0}^{\otimes n},\rho_{u,\varepsilon}^{\otimes n}%
,M)\geq1+o\left(  1\right)  ,
\]
so that the first relation in (\ref{sep-rate-def}) is shown.
\end{proof}


\begin{proof}[Proof of Theorem~\ref{thm:Test-2}]
It suffices to prove that if $\varphi_{n}=c_{n}n^{-1/2}$ with $c_{n}%
\rightarrow c>0$ then $\mathbb{P}_{e}^{\ast}\left(  \varphi_{n}\right)
\rightarrow\exp\left(  -c^{2}/4\right)  $. In view of the upper bound
(\ref{upper-bound-test-2}), if suffices to prove
\begin{equation}
\mathbb{P}_{e}^{\ast}\left(  \varphi_{n}\right)  \geq\exp\left(
-c^{2}/4\right)  \left(  1+o\left(  1\right)  \right)  .
\label{suffices-lowbound}%
\end{equation}
Recall (cf. (\ref{trace-dist-pure})) that for any pure states $\rho
=|\psi\rangle\langle\psi|$ and $\rho_{0}=|\psi_{0}\rangle\langle\psi_{0}|$,
the condition \newline $\Vert\rho-\rho_{0}\Vert_{1}\geq\varphi_{n}$ in $H_{1}%
(\varphi_{n})$ is equivalent to a condition for the fidelity $F^{2}(\rho
,\rho_{0})=|\langle\psi|\psi_{0}\rangle|^{2}\leq1-\varphi_{n}^{2}/4$.

Let $\mathcal{H}_{0}\subset\mathcal{H}$ be the orthogonal complement of
$\mathbb{C}|\psi_{0}\rangle$ in $\mathcal{H}$. Consider the vector
\[
\psi_{u}=\sqrt{1-\Vert u\Vert^{2}}\cdot\psi_{0}+u,\quad u\in\mathcal{H}_{0}%
\]
and the corresponding pure state $|\psi_{u}\rangle\langle\psi_{u}|$ defined in
terms of the local vector $u$. We restrict the alternative hypothesis to a
smaller set of states such that $\Vert u\Vert\leq\gamma_{n}$, with $\gamma
_{n} = (\log n)^{-1}$. Since the fidelity is given by $F^{2}(\rho
_{0},|\psi_{u}\rangle\langle\psi_{u}|)=|\langle\psi_{u}|\psi_{0}\rangle
|^{2}=1-\Vert u\Vert^{2}$, the restricted hypothesis is characterised by
\[
1-\gamma_{n}^{2}\leq F^{2}(\rho_{0},|\psi_{u}\rangle\langle\psi_{u}%
|)\leq1-\varphi_{n}^{2}/4,\quad\mathrm{or}\quad\varphi_{n}^{2}/4\leq\Vert
u\Vert^{2}\leq\gamma_{n}^{2}.
\]
and additionally by $\left\Vert \psi_{u}\right\Vert _{\alpha}^{2}\leq L$ where
$\left\Vert \cdot\right\Vert _{\alpha}$ is given by (\ref{alpha-seminorm-def}).

Consider again the linear space $\mathcal{H}_{0,N}$ defined in the proof of
Theorem \ref{thm:Test-2} for a choice $N=N_{n}\sim\log \log n$. Since
$\mathcal{H}_{0,N}\subset\mathcal{H}_{0}$, we can further restrict the local
vector $u$ to $u\in\mathcal{H}_{0,N}$. Note that for $u\in\mathcal{H}_{0,N}$
and $\Vert u\Vert\leq\gamma_{n}$ we have
\begin{align*}
\left\Vert u\right\Vert _{\alpha}^{2}  &  =\sum_{j=0}^{N}\left\vert
\left\langle e_{j}|u\right\rangle \right\vert ^{2}j^{2\alpha}\leq N^{2\alpha
}\Vert u\Vert^{2}\leq N^{2\alpha}\gamma_{n}^{2}\\
&  \sim (\log \log n)^{2\alpha} (\log n)^{-2}=o\left(  1\right)  .
\end{align*}
It follows that
\[
\left\Vert \psi_{u}\right\Vert _{\alpha}\leq\sqrt{1-\left\Vert u\right\Vert
^{2}}\left\Vert \psi_{0}\right\Vert _{\alpha}+\left\Vert u\right\Vert
_{\alpha}\leq L^{1/2}
\]
for sufficiently large $n$, thus $\psi_{u}\in S^{\alpha}\left(  L\right)  $.
We can now write the test problem with restricted alternative as
\[%
\begin{array}
[c]{ll}%
H_{0}: & \rho=\rho_{0}\\
H_{1}^{\prime}(\varphi_{n}): & \rho=|\psi_{u}\rangle\langle\psi_{u}|\text{:
}u\in\mathcal{H}_{0,N}\text{, }\varphi_{n}/2\leq\Vert u\Vert\leq\gamma_{n}.
\end{array}
\]


By the strong approximation proven in Theorem~\ref{th.qlan} we get that the
models
\[
\{|\psi_{u} \rangle\langle\psi_{u}|^{\otimes n}, \, \|u\|\leq\gamma_{n}\}
\quad\text{and}\quad\{ |G(\sqrt{n}u)\rangle\langle G(\sqrt{n} u)|, \,
\|u\|\leq\gamma_{n}\}
\]
are asymptotically equivalent, where $G(\sqrt{n}u)$ is the coherent vector in
the Fock space $\Gamma_{s}(\mathcal{H}_{0})$ pertaining to $\sqrt{n} u$. Note
that this proof is very similar to the previous proofs of lower bounds, with a
major difference: the reduced set of states under the alternative hypothesis
is defined with repect to $\rho_{0}$ given by the null hypothesis $H_{0}$
instead of an arbitrary state previously.

In the asymptotically equivalent Gaussian white noise model, the modified
hypotheses concern Gaussian states which can be written in terms of their
coherent vectors as
\begin{equation*}%
\begin{array}
[c]{ll}%
H_{0}: & |G(0)\rangle\\
H_{1}(\varphi_{n}): & |G(\sqrt{n}u)\rangle\text{: }u\in\mathcal{H}%
_{0,N}\text{, }\varphi_{n}/2\leq\Vert u\Vert\leq\gamma_{n}\text{. }%
\end{array}
\end{equation*}

In order to prove the theorem it is sufficient to prove that
\begin{align}
&  \inf_{M_{n}}\sup_{\varphi_{n}/2\leq\Vert u\Vert\leq\gamma_{n}\text{, }%
u\in\mathcal{H}_{0,N}}R_{n}^{T}(|G(0)\rangle\langle G(0)|,|G(\sqrt{n}%
u)\rangle\langle G(\sqrt{n}u)|,M_{n})\label{gaussian-low-bound}\\
&  \geq\exp\left(  -c^{2}/4\right)  +o\left(  1\right)
\label{gaussian-low-bound-a}%
\end{align}
as $n\rightarrow\infty$.

Note that $\dim$ $\mathcal{H}_{0,N}=N$; let $\{g_{j},\,j=1,\ldots,N\}$ be an
orthogonal basis of $\mathcal{H}_{0,N}$ and let $|u\rangle=\sum_{j=1}^{N}%
u_{j}|g_{j}\rangle$. The quantum Gaussian white noise model $\{|G(\sqrt
{n}u)\rangle,u\in\mathcal{H}_{0,N},\,\Vert u\Vert\leq\gamma_{n}\}$ is then
equivalent to the quantum Gaussian sequence model $\{\otimes_{j=1}^{N}%
|G(\sqrt{n}u_{j})\rangle,\,\Vert u\Vert\leq\gamma_{n}\}$. From now on
$|G(z)\rangle$ denotes the coherent vector in the Fock space $\mathcal{F}%
(\mathbb{C})$ pertaining to $z:=x+iy\in\mathbb{C}$. Recall that such a state
is fully characterized by its Wigner function $W_{G(z)}$, which in the case of
coherent states is the density fuction of a bivariate Gaussian distribution.

We shall bound from below the maximal type 2 error probability in the risk
$R_{n}^{T}(M_n) $ in (\ref{gaussian-low-bound})
\begin{equation}
\sup_{\varphi_{n}/2\leq\Vert u\Vert\leq\gamma_{n}\text{, }u\in\mathcal{H}%
_{0,N}}\mathrm{Tr}\left(  |G(\sqrt{n}u)\rangle\langle G(\sqrt{n}u)|\cdot
M_{n,0}\right)  \label{gaussian-low-bound-2}%
\end{equation}
by an average over $u$, where the average is taken with respect to a prior
distribution defined as follows. Assume that $u_{j}$, $j=1,\ldots,N$ are
independently distributed following a complex centered Gaussian law with
variance $\frac{\sigma^{2}}{2}I_{2}$, where $\sigma^{2}=\frac{c^{2}}{4n}%
\frac{1+\varepsilon}{N}$, for some fixed and arbitrary small $\varepsilon>0$,
and $I_{2}$ is the 2 by 2 identity matrix.

\begin{lemma}
\label{lemma:priortest} Let $\Pi$ be the distribution of independent complex
random variables $u_{j}$, for $j=1,...,N$, each one distributed as
\[
N\left(  0,\frac{{\sigma^{2}}}{2}I_{2}\right)  ,\quad\sigma^{2}=\frac{c^{2}%
}{4n}\frac{1+\varepsilon}{N},
\]
for fixed $\varepsilon>0$ and $N \sim \log \log n$. Then as $n\rightarrow\infty$
\[
\mathbb{P}\left(  \frac{c_{n}^{2}}{4n}\leq\Vert u\Vert^{2}\leq\frac{c_{n}^{2}%
}{4n}(1+\varepsilon)^{2}\right)  \rightarrow1,\quad\text{as }n\rightarrow
\infty,
\]
and in particular if $\gamma_{n}=(\log n)^{-1}$ then $\mathbb{P}\left(  \varphi_{n}/2\leq\Vert u\Vert\leq
\gamma_{n}\right)  \rightarrow1,\quad\text{as }n\rightarrow\infty$.
\end{lemma}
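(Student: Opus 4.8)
The plan is to reduce the statement to a standard chi-squared concentration bound and then to translate the resulting interval into the two claimed inclusions. First I would record the first two moments of $\Vert u\Vert^{2}=\sum_{j=1}^{N}|u_{j}|^{2}$. Writing $u_{j}=x_{j}+iy_{j}$ with $x_{j},y_{j}$ independent $N(0,\sigma^{2}/2)$, each $|u_{j}|^{2}=x_{j}^{2}+y_{j}^{2}$ is distributed as $(\sigma^{2}/2)$ times a $\chi^{2}_{2}$ variable, so $\mathbb{E}|u_{j}|^{2}=\sigma^{2}$ and $\mathrm{Var}(|u_{j}|^{2})=\sigma^{4}$. By independence,
\[
\mathbb{E}\Vert u\Vert^{2}=N\sigma^{2}=\frac{c^{2}}{4n}(1+\varepsilon),\qquad \mathrm{Var}(\Vert u\Vert^{2})=N\sigma^{4}=\frac{1}{N}\left(\frac{c^{2}(1+\varepsilon)}{4n}\right)^{2}.
\]
The decisive feature is that the relative standard deviation is $\sqrt{\mathrm{Var}(\Vert u\Vert^{2})}/\mathbb{E}\Vert u\Vert^{2}=N^{-1/2}\sim(\log n)^{-1/2}\to 0$, so $\Vert u\Vert^{2}$ concentrates sharply around $\frac{c^{2}}{4n}(1+\varepsilon)$.

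Next I would apply Chebyshev's inequality to the target interval $[\,c_{n}^{2}/(4n),\ c_{n}^{2}(1+\varepsilon)^{2}/(4n)\,]$. Using $c_{n}\to c$, for large $n$ the lower endpoint lies below the mean by $(c^{2}(1+\varepsilon)-c_{n}^{2})/(4n)\gtrsim\varepsilon c^{2}/(4n)$ and the upper endpoint lies above it by a comparable amount, so in both directions the distance from the mean to the endpoint is a fixed proportion (of order $\varepsilon$) of the mean itself. Since the standard deviation of $\Vert u\Vert^{2}$ is of order $1/(n\sqrt{N})$, that distance equals $\gtrsim\varepsilon\sqrt{N}$ standard deviations, whence
\[
\mathbb{P}\left(\frac{c_{n}^{2}}{4n}\leq\Vert u\Vert^{2}\leq\frac{c_{n}^{2}}{4n}(1+\varepsilon)^{2}\right)\geq 1-\frac{C}{\varepsilon^{2}N}\to 1,
\]
which is the first assertion.

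Finally I would deduce the ``in particular'' statement by matching the endpoints to $\varphi_{n}/2$ and $\gamma_{n}$. Squaring $\varphi_{n}/2=c_{n}n^{-1/2}/2$ gives $(\varphi_{n}/2)^{2}=c_{n}^{2}/(4n)$, exactly the lower endpoint, so the event above already forces $\Vert u\Vert\geq\varphi_{n}/2$. For the upper side I would compare $c_{n}^{2}(1+\varepsilon)^{2}/(4n)\asymp n^{-1}$ with $\gamma_{n}^{2}=n^{-1/2}(\log n)^{-2}$; since $n^{-1}=o(\gamma_{n}^{2})$, for large $n$ the upper endpoint sits below $\gamma_{n}^{2}$, so on the concentration event $\Vert u\Vert\leq\gamma_{n}$ as well. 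Thus the concentration event is contained in $\{\varphi_{n}/2\leq\Vert u\Vert\leq\gamma_{n}\}$ for large $n$, and its probability tends to $1$. The computation is essentially routine; the only point demanding care is the bookkeeping of $c$ versus $c_{n}$ (the prior variance is calibrated with the limit $c$ while the target interval is stated with $c_{n}$), but this is harmless precisely because $c_{n}\to c$ and the $\varepsilon$-slack on each side dominates the vanishing relative fluctuation $(\log n)^{-1/2}$.
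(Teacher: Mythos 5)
Your proposal is correct and follows essentially the same route as the paper's own proof: a Chebyshev bound on $\Vert u\Vert^{2}=\sum_{j=1}^{N}|u_{j}|^{2}$ around its mean $N\sigma^{2}=c^{2}(1+\varepsilon)/(4n)$, using $\mathrm{Var}(\Vert u\Vert^{2})=N\sigma^{4}$ and $N\sim\log n\to\infty$ so that the $\varepsilon$-sized gaps to the two endpoints dominate the $N^{-1/2}$ relative fluctuation, followed by the same endpoint comparisons $(\varphi_{n}/2)^{2}=c_{n}^{2}/(4n)$ and $c_{n}^{2}/n=o(\gamma_{n}^{2})$. Your explicit remark on the $c$ versus $c_{n}$ bookkeeping is exactly the point the paper handles via the $(c^{2}\varepsilon+o(1))$ term in its Chebyshev denominator.
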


\begin{proof}
We have
\begin{align*}
\mathbb{P}\left(  \Vert u\Vert^{2}<\frac{c_{n}^{2}}{4n}\right)   &
=\mathbb{P}\left(  \sum_{j=1}^{N}(|u_{j}|^{2}-\sigma^{2})<\frac{c_{n}^{2}}%
{4n}-N\frac{c^{2}}{4n}\frac{1+\varepsilon}{N}\right) \\
&  \leq\frac{\mathrm{Var}(\sum_{j=1}^{N}|u_{j}|^{2})}{\left(  c_{n}^{2}%
-c^{2}\left(  1+\varepsilon\right)  \right)  ^{2}/16n^{2}}=\frac{N\sigma^{4}%
}{\left(  c^{2}\varepsilon+o(1)\right)  ^{2}/16n^{2}}\\
&  =\frac{Nc^{4}(1+\varepsilon)^{2}/16n^{2}N^{2}}{\left(  c^{2}\varepsilon
+o(1)\right)  ^{2}/16n^{2}}=\left(  \frac{1+\varepsilon}{\varepsilon
+o(1)}\right)  ^{2}\frac{1}{N}=o(1),
\end{align*}
since $N\sim\log \log n\rightarrow\infty$. Similarly, as $(1+\varepsilon
)^{2}>1+\varepsilon$, one shows that
\[
\mathbb{P}\left(  \Vert u\Vert^{2}>\frac{c_{n}^{2}}{4n}(1+\varepsilon
)^{2}\right)  \rightarrow0,
\]
as $n\rightarrow\infty$ and thus we get
\[
\mathbb{P}\left(  \frac{c_{n}^{2}}{4n}\leq\Vert u\Vert^{2}\leq\frac{c_{n}^{2}%
}{4n}(1+\varepsilon)^{2}\right)  \rightarrow1.
\]
As $\gamma_{n}^{2}= (\log n)^{-2}$ decays slower than $c_{n}^{2}/n$,
and $\varphi_{n}/2=c_{n}n^{-1/2}/2$, we deduce that
$$
\mathbb{P}\left(
\varphi_{n}/2\leq\Vert u\Vert\leq\gamma_{n}\right)  \rightarrow 1
$$
as $n\rightarrow\infty$ which ends the proof of the lemma.
\end{proof}

Let us denote by $\Pi$ the prior distribution introduced in
Lemma~\ref{lemma:priortest}. Let us go back to (\ref{gaussian-low-bound-2})
and bound the expression from below as follows:%
\begin{align*}
&  \sup_{\varphi_{n}/2\leq\Vert u\Vert\leq\gamma_{n}\text{, }u\in
\mathcal{H}_{0,N}}\mathrm{Tr}\left(  |G(\sqrt{n}u)\rangle\langle G(\sqrt
{n}u)|\cdot M_{n,0}\right) \\
&  \geq\int_{\varphi_{n}/2\leq\Vert u\Vert\leq\gamma_{n}}\mathrm{Tr}%
(|G(\sqrt{n}u)\rangle\langle G(\sqrt{n}u)|M_{n,0})\Pi(du)
\end{align*}%
\begin{align*}
&  \geq\int\mathrm{Tr}(|G(\sqrt{n}u)\rangle\langle G(\sqrt{n}u)|M_{n,0}%
)\Pi(du)\\
& -\int_{\left\{  \varphi_{n}/2\leq\Vert u\Vert\leq\gamma_{n}\right\}
^{c}}\mathrm{Tr}(|G(\sqrt{n}u)\rangle\langle G(\sqrt{n}u)|M_{n,0})\Pi(du)\\
&  \geq\int\mathrm{Tr}(|G(\sqrt{n}u)\rangle\langle G(\sqrt{n}u)|M_{n,0}%
)\Pi(du)-\Pi\left(  \left\{  \varphi_{n}/2\leq\Vert u\Vert\leq\gamma
_{n}\right\}  ^{c}\right)  .
\end{align*}
By Lemma~\ref{lemma:priortest}, we get for (\ref{gaussian-low-bound})%
\[
\sup_{\varphi_{n}/2\leq\Vert u\Vert\leq\gamma_{n}\text{, }u\in\mathcal{H}%
_{0,N}}R_{n}^{T}(G(0),G(\sqrt{n}u),M_{n})
\]%
\begin{equation}
\geq\mathrm{Tr}(|G(0)\rangle\langle G(0)|M_{n,1})+\mathrm{Tr}\left(
\int|G(\sqrt{n}u)\rangle\langle G(\sqrt{n}u)|\Pi(du)\cdot M_{n,0}\right)
+o(1). \label{gaussian-low-bound-3}%
\end{equation}
The integral on the right side is a mixed state which can be written as
\[
\Phi:=\int|G(\sqrt{n}u)\rangle\langle G(\sqrt{n}u)|\Pi(du)=\bigotimes
_{j=1}^{N}\int|G(\sqrt{n}u_{j})\rangle\langle G(\sqrt{n}u_{j})|\cdot\Pi
_{j}(du_{j}).
\]
Similarly to the proof of Theorem \ref{thm:Flowbounds} we use equation
\eqref{eq.twirling} to show that each of the Gaussian integrals above produces
a thermal (Gaussian) state
\[
\Phi(r)=\left(  1-r\right)  \sum_{k=0}^{\infty}r^{k}|k\rangle\langle k|,\qquad
r=\frac{n\sigma^{2}}{n\sigma^{2}+1}.
\]
Since $|G(0)\rangle\langle G(0)|=\Phi(0)$, the main terms in
(\ref{gaussian-low-bound-3}) are the sum of error probabilities for testing
two simple hypothesis $H_{0}:\Phi(0)^{\otimes N}$ against $H_{1}%
:\Phi(r)^{\otimes N}$. Moreover, we have two commuting product states under
the two simple hypotheses, which reduces the problem to a classical test
between the $N$-fold products of discrete distributions $H_{0}:\{\mathcal{G}%
(0)\}^{\otimes N}$ and $H_{1}:\{\mathcal{G}(r)\}^{\otimes N}$. Here
$\mathcal{G}(r)$ is the geometric distribution $\left\{  (1-r)r^{k}\right\}
_{k=0}^{\infty}$; in particular s $\mathcal{G}(0)$ is the degenerate
distribution concentrated at $0$. The optimal testing error is given by the
maximum likelihood test which decides $H_{0}$ if and only if all observations
are $0$. The type 1 error is 0 and the type 2 error is%
\begin{align*}
\left(  1-r\right)  ^{N}  &  =(n\sigma^{2}+1)^{-N}\geq\exp(-N\cdot n\sigma
^{2})\\
&  =\exp\left(  -Nn\frac{c^{2}}{4n}\frac{1+\varepsilon}{N}\right)
=\exp\left(  -\frac{c^{2}\left(  1+\varepsilon\right)  }{4}\right)  .
\end{align*}
Since $\varepsilon>0$ was arbitrary, this establishes the lower bound
(\ref{gaussian-low-bound-a}) and thus (\ref{suffices-lowbound}).
\end{proof}


\bibliographystyle{imsart-number}
\bibliography{biblioQAE-m9}

\end{document}